\newtheorem{theorem}{Theorem}[section]
\newtheorem{lemma}[theorem]{Lemma}
\theoremstyle{definition}
\newtheorem{definition}[theorem]{Definition}
\newtheorem{remark}{{\it Remark}}[section]
\numberwithin{equation}{section}
\newcommand{\nn}{\nonumber}
\newcommand{\Be}{{\boldsymbol{e}}}
\newcommand{\Bn}{{\boldsymbol{n}}}
\newcommand{\Bp}{{\boldsymbol{p}}}
\newcommand{\Bq}{{\boldsymbol{q}}}
\newcommand{\Br}{{\boldsymbol{r}}}
\newcommand{\Bv}{{\boldsymbol{v}}}
\newcommand{\Bx}{{\boldsymbol{x}}}
\newcommand{\BH}{{\boldsymbol{H}}}
\newcommand{\BL}{{\boldsymbol{L}}}
\newcommand{\BV}{{\boldsymbol{V}}}
\newcommand{\Ce}{{\mathcal E}}
\newcommand{\Ci}{{\mathcal I}}
\newcommand{\Cn}{{\mathcal N}}
\newcommand{\Cp}{{\mathcal P}}
\newcommand{\Ct}{{\mathcal T}}
\newcommand{\bbeta}{{\boldsymbol{\beta}}}
\newtheorem{exm}{Example}[section]
\begin{document}

\title{Robust a Posteriori Error Estimates for HDG method for Convection-Diffusion Equations}


\author{Huangxin Chen}
\address{School of Mathematical Sciences and Fujian Provincial Key Laboratory on Mathematical Modeling \& High Performance Scientific Computing, Xiamen University, Fujian, 361005, P.R. China} 
\email{chx@xmu.edu.cn} 

\author{Jingzhi Li}
\address{Faculty of Science, South University of Science and Technology of China,Shenzhen, 518055, China}
\email{li.jz@sustc.edu.cn} 
\author{Weifeng Qiu}
\address{Department of Mathematics, City University of Hong Kong, 83 Tat Chee Avenue, Kowloon, Hong Kong, China} 
\email{weifeqiu@cityu.edu.hk}

\thanks{The authors would also like to thank the associate editor and all the referees for constructive criticism leading to 
a better presentation of the material in this paper. The first author would like to thank the support from the City University of 
Hong Kong where this work was carried out  during his visit, and he also thanks the supports from the NSF of China 
(Grant No. 11201394) and the NSF of Fujian Province (Grant No. 2013J05016). The work of the second author was supported 
by the NSF of China (Grant No. 11201453). The work of the third author was supported by the GRF of Hong Kong (Grant No. 9041980).}


\begin{abstract}
We propose a robust  a posteriori error estimator for the hybridizable discontinuous Galerkin (HDG) method for convection-diffusion equations with dominant convection. The reliability and efficiency of the estimator are established for the error measured in an energy norm. The energy norm is uniformly bounded even when the diffusion coefficient tends to zero. The estimators are robust in the sense that the upper and lower bounds of error are uniformly bounded 
with respect to the diffusion coefficient. A weighted test function technique and the Oswald interpolation are key ingredients in the analysis. Numerical results verify the robustness of the proposed a posteriori error estimator. 
\end{abstract}

\subjclass[2000]{65N30, 65L12}

\keywords{hybridizable discontinuous Galerkin method, a posteriori error estimates, convection-diffusion equations}

\maketitle

\markboth{H. Chen, J. Li, W. Qiu}{Robust a posteriori error estimator for convection diffusion equations}




\section{Introduction}\label{introduction}
Given a bounded, polyhedral domain $\Omega \subset R^d (d=2,3)$, we consider the convection-diffusion equations
\begin{subequations}
\label{cd_eqs}
\begin{align}
\label{cd_eqs_1}
-\epsilon\Delta u + \boldsymbol{\beta}\cdot \nabla u + cu = &\; f  \quad \text{ in $\Omega$, } 
\\
\label{bd_eqs}
u = &\; g \quad \text{ on $\partial \Omega$.}
\end{align}
\end{subequations}
The data and the right-hand sides in (\ref{cd_eqs}) satisfy the following assumptions:
\begin{enumerate}
\item[(A1)] $0<\epsilon \leq 1$.
\item[(A2)] $ \boldsymbol{\beta} \in W^{1,\infty}(\Omega)^d$, $c \in L^{\infty}(\Omega)$, $f\in L^2(\Omega)$ 
and $g\in H^{\frac{1}{2}}(\partial \Omega)$.
\item[(A3)] $c-\frac{1}{2} {\rm div}\boldsymbol{\beta} \geq 0$.
\item[(A4)] There is a function $\psi \in W^{1,\infty}(\Omega)$ and a positive constant $b_0$ 
such that $\boldsymbol{\beta} \cdot \nabla \psi \geq b_0$. 
\end{enumerate}
Assumption (A1) includes the case of  the convection-dominated regime. According to \cite{AyusoMarini:cdf}, Assumption (A4) is satisfied if $\boldsymbol{\beta}$ has no closed curves and $\vert \boldsymbol{\beta}(x)\vert\neq 0 \text{ for all }x\in\Omega$.

It is well known that solutions of  (\ref{cd_eqs}) may develop layers (cf. \cite{Eckhaus72,FLRT83}). In particular, 
the solutions may have singular interior layer of width $O(\sqrt{\epsilon})$ or outflow layer of width $O(\epsilon)$. 
Standard numerical methods, e.g., standard finite element method or central finite difference method, are not robust 
when the quantity $\epsilon/\|\boldsymbol{\beta}  \|_{L^\infty(\Omega)}$ is small compared to the mesh size. 
In order to stabilize the numerical method, several remedies are proposed for addressing the issue, for instance, 
streamline diffusion method \cite{BrooksHughes}, 
residual free bubble methods  \cite{BrezziHughesMariniRussoSuli1999,BrezziMariniSuli2000,BurmanErn09}, 
local projection schemes \cite{KT11}, subgrid scale method \cite{Codina02,BC2006}, 
continuous interior penalty (CIP) methods \cite{Burman04,Burman07}, 
discontinuous Galerkin methods \cite{AyusoMarini:cdf,HoustonSchwabSuli2002,HughesScovazziBochevBuffa2006}, 
and recently discontinuous Petrov-Galerkin (DPG) methods
\cite{BroersenStevenson:mild_weak_cd,ChanHeuerTanDemkowicz:DPG_CD,DemkoHeuer:2013:DPG_cd}, 
HDG method \cite{FQZ13} and the first order least squares method \cite{CFLQ14}. 
One can refer to \cite{Roos08,Stynes:acta_cd} for more other stabilization techniques. 
But in order to capture the potential interior or outflow layer of the solutions to the problem (\ref{cd_eqs}), 
the local P\'eclet number $P_e = h \|\boldsymbol{\beta}  \|_{L^\infty(\Omega)} / \epsilon $ near the layers 
should be small enough, where $h$ is the mesh size. Hence it would be quite expensive for the stabilized 
numerical methods used on the quasi-uniform mesh to capture the layers when $\epsilon$ is small. If the mesh 
in the vicinity of the layers can be locally refined, the cost of numerical computations could be reduced. 
Therefore the adaptive finite element method is a natural choice for the efficient solution of  
convection-diffusion equations with dominant convection.

The adaptive finite element method based on a posteriori error estimates have been well established for second-order 
elliptic problems (cf. \cite{Ainsworth,Verfurth96}). In recent years the a posteriori error estimates are also extended to 
convection-diffusion equation. An early attempt was proposed by Eriksson and Johnson in \cite{EJ93}, using regularization 
and duality techniques. Verf\"{u}rth \cite{Verfurth98} proposed semi-robust estimators in the energy norm for the standard 
Galerkin approximation and the streamline upwind Petrov-Galerkin (SUPG) discretization. In \cite{Verfurth05} Verf\"{u}rth 
improved his results by giving the estimates which are robust with dominant convection in a norm incorporating the 
standard energy norm and a dual norm of the convective derivative. Very recently, Tobiska and Verf\"{u}rth \cite{TV2014} derived 
the same robust a posteriori error estimators for a wide range of stabilized finite element methods such as streamline 
diffusion methods, local projection schemes, subgrid scale technique and CIP method. However, the energy norm of error 
used in Verf\"{u}rth's estimates is defined through a dual norm which is not easy to compute. Sangalli \cite{Sangalli08} 
proposed different norms for the a posteriori error estimates that allow for robust estimators, but the analysis is only 
valid in the one dimensional case. As to other approaches for the robust error estimations, one can refer to \cite{Voh07} 
for mixed finite element methods, \cite{Voh08} for cell-centered finite volume scheme,  \cite{Alaoui07} for 
nonconforming finite element method,  \cite{Ern08,Ern10,SZ09,SZ11} for interior penalty discontinuous Galerkin method.

Discontinuous Galerkin (DG) methods have several attractive features compared with conforming finite element methods. 
For example, DG methods have elementwise conservation of mass, and they work well on arbitrary meshes. However, 
the dimension of the approximation DG space is much larger than the dimension of the corresponding conforming space. 
The HDG method \cite{Cockburn09,Cockburn10,KSC2012} was recently introduced to address this issue. HDG methods 
retain the advantages of standard DG methods and result in significant reduced degrees of freedom. New variables on 
all interfaces of mesh are introduced such that the numerical solution inside each element can be computed in terms of them, 
and the resulting algebraic system is only due to the unknowns on the skeleton of the mesh. In \cite{FQZ13}, the HDG method 
was proposed and analyzed for the problem (\ref{cd_eqs}) on shape-regular mesh. The stabilization parameter of the HDG method 
in \cite{FQZ13} can be determined clearly, meanwhile  the penalty parameters of the DG schemes \cite{AyusoMarini:cdf} need to be 
chosen empirically. Moreover, the condition number of stiffness matrix of a new way of implementing HDG method 
in \cite{FQZ13} was proven to be bounded by $O(h^{-2})$ and independent of the diffusion coefficient $\epsilon$. 
These properties are important for the efficient solution of the problem (\ref{cd_eqs}) and encourage us to consider 
the corresponding HDG method on adaptive meshes.

The a posteriori error analysis for the HDG method for second order elliptic problems has been presented in \cite{Cockburn12,Cockburn13}, where the error incorporates only the flux and a postprocessed solution used in the estimators. To our best knowledge, no a posteriori error estimates for the HDG discretizations of convection-diffusion problems have been studied in the literature so far. In this work, our objective is to show that the HDG scheme proposed in \cite{FQZ13} gives rise to robust a posteriori error estimates for the problem (\ref{cd_eqs}). In comparison with the postprocessing technique 
utilized in \cite{Voh07,Cockburn12,Cockburn13}, we establish the estimators without any postprocessed solution since the solution of 
 (\ref{cd_eqs}) is always not smooth and there is no superconvergence result for the HDG method when $\epsilon \ll 1$.

We notice that the a posteriori error estimators for nonconforming finite element method \cite{Alaoui07} and 
interior penalty DG method \cite{Ern08,Ern10} are only semi-robust in the sense that they yield lower and upper bounds of 
the error which differ by a factor equal at most to $\mathcal{O}(\epsilon^{-1/2})$. In \cite{SZ09,SZ11}, the a posteriori error estimator 
is robust in the sense that the ratio of the constants in the upper and lower bounds of error is independent of the diffusion coefficient.
However, the energy norm of error in \cite{SZ09,SZ11} contains the jump term 
$\left( h_{F}\epsilon^{-1}\| \llbracket  u_h \rrbracket  \|^2_{0,F} \right)^{1/2}$ on each interior interface of meshes.  
In contrast, the a posteriori error estimator in this paper is robust based on the energy norm in (\ref{energy-error}), 
which contains a jump term $\left( \gamma_F\| \llbracket  u_h \rrbracket  \|^2_{0,F} \right)^{1/2}$ instead. One can refer to (\ref{A_gamma}) for the definition of paparemeter $\gamma_F$. So, our a posteriori error estimator will not enlarge the error estimate too much as the error estimator in \cite{SZ09,SZ11},  
when the mesh size is not small compared with the diffusion coefficient.

To derive the reliability and efficiency of the estimators for the error measured in an energy norm which incorporates a scaling flux 
and the scalar solution of the HDG discretization, two techniques are utilized. The first one is to use the Oswald interpolation operator 
to approximate a discontinuous polynomial by a continuous and piecewise polynomial function and to control the approximation by 
the jumps (cf. \cite{OP03,OP07}). For most of a posteriori error estimates mentioned above (e.g. \cite{Verfurth05,Voh07,TV2014,Ern08,Ern10,SZ09,SZ11}), the analysis only gives the estimates for the energy error 
without  $L^2$-error of the scalar solution when $c-\frac{1}{2} {\rm div}\boldsymbol{\beta} = 0$. The second one is to address 
this issue and to employ a weighted function to derive the estimates for the error which contains $L^2$-error of the scalar solution. 
This idea goes back to N\"{a}vert's work \cite{Navert1982} for convection-diffusion problems and was used to obtain the $L^2$-stability of the original DG method for pure hyperbolic equation \cite{RH1973} and 
extended to convection-diffusion equations using the IP-DG method \cite{Guzman2006,AyusoMarini:cdf}, the HDG method \cite{FQZ13} 
and the first order least squares method \cite{CFLQ14}. 

In the numerical experiments, the convection-diffusion problems with interior or outflow layers are tested based on the 
proposed a posteriori error estimator. The robustness of the a posteriori error estimator based on the HDG method 
is observed for the problems with different diffusion coefficient. We also find that the convergence of the adaptive HDG method is 
almost optimal, i.e.,  the convergence rate is almost $O(N^{-s})$, where $N$ is the number of elements, $s$ depends on 
the polynomial order $p$.


The outline of the paper is as follows: We introduce some notations, the HDG method, a posteriori error estimator and main results in the next section.
In section 3, we collect some auxiliary results for analysis. Section 4 and section 5 are devoted to the proofs of reliability and efficiency, respectively.
In the final section, we give some numerical results to confirm our theoretical analysis.

\section{Notation, HDG method, error estimator, and main results}\label{HDG_sec2}
In this section, we begin with some basic notation and hypotheses of meshes. Secondly, we introduce  
the HDG method for (\ref{cd_eqs}) in \cite{FQZ13}. Then, we define the corresponding 
a posteriori error estimator. Finally, we give the main results of reliability and efficiency.  

\subsection{Notation and the mesh}
Let $\mathcal{T}_{h}$ be a conforming, shape-regular simplicial triangulation of $\Omega$.
For any element $T\in \mathcal{T}_{h}$, $\partial{T}$ denotes the set of its edges in the two
dimensional case and of its faces in the three dimensional case.
Elements of $\partial{T}$ will be generally referred to as faces,
{regardless of dimension}, and denoted by $F$.
We define $\partial\mathcal{T}_{h} := \{\partial T: T\in\mathcal{T}_{h}\}$.
We denote by $\mathcal{E}_{h}$ the set of all faces in the triangulation (the skeleton), while 
the set of all interior {(boundary)}
faces of the triangulation will be denoted $\mathcal{E}_{h}^{0}$ {($\mathcal{E}_{h}^{\partial}$)}. 
Correspondingly, we refer to $\Cn_h$ the set of vertices and to $\Cn_h^0$ the set of interior vertices. 
For any  $T\in \mathcal T_h $, let $h_T$ be the diameter of element $T$. Similarly, for any $F\in \Ce_h$, we define $h_{F} := {\rm diam}(F)$. 
Throughout this paper, we use the standard notations and definitions for Sobolev spaces (see, e.g., Adams\cite{Adams}). We also use the notation 
$\| \cdot \|_{0,D}$ and $\| \cdot  \|_{0,\Gamma}$ to denote the $L^2$-norm on the elements $D$ and faces $\Gamma$, respectively.

\subsection{The HDG method}
The HDG method is based on a first order formulation of the convection-diffusion equation (\ref{cd_eqs}),  
which can be rewritten in a mixed form as finding $(\Bq, u)$ such that
\begin{subequations}
\label{HDG1}
\begin{align}
\epsilon^{-1}{\Bq}+\nabla u&=0 \qquad {\rm in }\ \Omega, \\
{\rm div}\, {\Bq} + \bbeta \cdot  \nabla u + cu&=f  \qquad {\rm in }\ \Omega, \label{mixed-eqs}\\
u&=g \qquad {\rm on }\
\partial\Omega. \label{HDG3}
\end{align}
\end{subequations}

For any element $T\in \mathcal{T}_{h}$ and any face $F\in\mathcal{E}_{h}$, we define
$${\BV}(T):=(\mathcal P_p(T))^d,\qquad W(T):=\mathcal P_p(T),\qquad M(F):=\mathcal P_p(F),$$
where $\mathcal P_p(S)$ is the space of polynomials of total
degree not larger than $p\geq 1$ on $S$. The finite element spaces are given by
\begin{align*}
{\BV}_h:&=\{{\Bv}\in {\BL}^2(\Omega)\,  : \, {\Bv}|_T\in {\BV}(T)\ {\rm for\  all }\ T\in \mathcal T_h\},\\
W_h:&=\{w\in L^2(\Omega)\, : \, w|_T\in W(T)\ {\rm for\ all }\ T\in \mathcal T_h\},\\
M_h:&=\{ \mu\in L^2(\mathcal E_h)\, : \,  \mu|_F\in M(F)\ {\rm for\ all }\ F\in \mathcal E_h\},\\
M_h(g):&=\{ \mu \in M_h \, : \, \int_{\partial\Omega} (\mu-g)\xi ds =0 {\rm \ for\ all }\ \xi \in M_h \},
\end{align*}
where ${\BL}^2(\Omega):=( L^2(\Omega))^d$ and $L^2(\mathcal
E_h):=\Pi_{F\in \mathcal E_h}L^2(F)$. 

The HDG method seeks finite element
approximations $({\Bq}_h, u_h,\widehat u_h)\in {\BV}_h\times
W_h\times M_h$ satisfying
\begin{subequations}
\label{PD}
\begin{align}
\label{P1}
& (\epsilon^{-1} {\Bq}_h, \Br)_{\mathcal T_h}-(u_h, {{\rm div \, {\Br}}})_{\mathcal T_h}+\langle \widehat u_h, {{\Br}\cdot {\Bn}}\rangle_{\partial \mathcal T_h}=0,\\
\label{P2}
& -({\Bq}_h +\bbeta u_h ,{\nabla w})_{\mathcal T_h}+ ((c-{\rm div}\, \bbeta) u_h, w)_{\mathcal T_h}+\langle (\widehat {\Bq}_h + \widehat{\bbeta u_h})\cdot {\Bn} ,w\rangle_{\partial \mathcal T_h}\\
\nonumber
&\qquad \qquad \qquad =(f, w)_{\mathcal T_h} ,\\
\label{P3}
& \langle  \widehat u_h,\mu\rangle_{\partial \Omega}=\langle g,  \mu\rangle_{\partial \Omega},\\
\label{P4} 
& \langle (\widehat {\Bq}_h + \widehat{\bbeta u_h})\cdot {\Bn},
\mu\rangle_{\partial \mathcal T_h \backslash
 \partial \Omega}=0,
\end{align}
\end{subequations}
for all $({\Br},w,\mu)\in {\BV}_h \times W_h \times M_h$, where the normal component of numerical flux
$(\widehat {\Bq}_h + \widehat{\bbeta u_h})\cdot \Bn$ is given by
\begin{align}\label{numerical-flux}
(\widehat {\Bq}_h + \widehat{\bbeta u_h})\cdot \Bn ={\Bq}_h\cdot \Bn + (\bbeta\cdot \Bn) \widehat{u}_h +\tau(u_h-\widehat u_h) \qquad {\rm on } \
\partial \mathcal T_h,
\end{align}
and the stabilization function $\tau$ is a piecewise, nonnegative constant defined on $\partial \mathcal T_h$.
Here, we define $\left(\eta,\zeta\right)_{\mathcal{T}_{h}} := \sum_{K \in \mathcal{T}_{h}} \int_K \eta\,\zeta\,\mathrm{dx},$ 
and $\langle \eta, \zeta \rangle_{\partial\mathcal{T}_{h}} := \sum_{K \in \mathcal{T}_{h}} \int_{\partial K} \eta\,\zeta\,\mathrm{ds}$.
One of the advantages of the HDG method is the elimination of both $\Bq_h$ and $u_h$ from the system (\ref{PD}) to obtain a formulation in terms of numerical trace $\widehat{u}_h$ only, one can refer to \cite{Cockburn09JSC,FQZ13,KSC2012,Nguyen09} for the implementation. 

The stabilization function $\tau$ in (\ref{numerical-flux}) is chosen as 
\begin{equation}
\label{tau_hdg}
\tau |_{F} = \max ( \sup_{\Bx \in F} \bbeta(\Bx) \cdot \Bn, 0  ) + \min( \rho_0 \frac{\epsilon}{h_T}, 1 ) \qquad \forall F\in \partial T,  \ T \in \Ct_h.
\end{equation}
Here, $0<\rho_0 \leq 1$. We emphasize that the choice of $\tau$ 
in (\ref{tau_hdg}) is the second type of stabilization function in \cite{FQZ13}. 
According to \cite{ChenCockburnHDGI}, the HDG method (\ref{PD}) with stabilization function (\ref{tau_hdg}) has a unique solution.
Compared with a recent work \cite{CFLQ14}, the intrinsic idea of choosing $\tau$ in the above HDG method is similar to the strategy of setting the ultra-weakly imposed boundary condition in the first order least squares method for (\ref{cd_eqs}).

\subsection{A posteriori error estimator}

We define  the elementwise residual function $R_{h}$ as
\begin{equation}
\label{elem_residual}
R_{h}|_{T} = f - \text{div} \boldsymbol{q}_{h} - \boldsymbol{\beta}\cdot \nabla u_{h} - c u_{h},\quad \forall T\in\mathcal{T}_{h}.
\end{equation}
We define 
\begin{align}
\label{A_alpha}
\alpha_S = \min\{h_S{\epsilon^{-\frac{1}{2}}} ,1\},
\end{align}
where $S$ can be any element $T\in\mathcal{T}_{h}$ or any face $F\in\mathcal{E}_{h}$.
In addition, for any $F\in\mathcal{E}_{h}$, we introduce 
\begin{align}
\label{A_gamma}
\gamma_F =\min\{ \frac{\epsilon}{h_F}+(\frac{h_F}{\epsilon}+\epsilon^{-\frac{1}{2}}\alpha_F) \|\boldsymbol{\beta}\|_{L^\infty(F)}  +h_F ,\frac{ \epsilon + \|\boldsymbol{\beta}\|_{L^\infty(F)}  }{h_F}  + h_F\}.
\end{align}

Now, we are ready to introduce the a posteriori error estimator in the following.
\begin{definition}
\label{def_estimator}(A posteriori error estimator)
The a posteriori error estimator is defined as
\begin{subequations}
\label{estimators}
\begin{align}
\label{estimator_eta}
\eta & = \left( \Sigma_{T\in\mathcal{T}_{h}}\eta_{T}^{2} + \Sigma_{F\in \mathcal{E}_{h}^{0}}(\eta_{F}^{0})^{2}
+\Sigma_{F\in \mathcal{E}_h^{\partial}}(\eta_{F}^{\partial})^{2}\right)^{\frac{1}{2}}\text{ where }\\
\label{estimator1}
\eta_{T} & = \left( \alpha_{T}^{2}\Vert R_{h}\Vert_{0,T}^{2} + \epsilon^{-1} \Vert \boldsymbol{q}_{h}+\epsilon\nabla u_{h}\Vert_{0,T}^{2}
\right)^{\frac{1}{2}}, \quad \forall T\in \mathcal{T}_{h},\\
\label{estimator2}
\eta_{F}^{0} & = \left( \epsilon^{-\frac{1}{2}} \alpha_F \| \llbracket \Bq_h \cdot\Bn \rrbracket  \|^2_{0,F} + \gamma_F\| \llbracket  u_h \rrbracket  \|^2_{0,F}
 \right)^{\frac{1}{2}}, \quad \forall F\in  \mathcal{E}_h^{0},\\
\label{estimator3}
\eta_{F}^{\partial} & =\gamma_F^{\frac{1}{2}}\|  u_h - g  \|_{0,F}, \quad \forall F\in \mathcal{E}_h^{\partial}.
\end{align}
\end{subequations}
Here, for any interior face $F=\partial T^{+} \cap \partial T^{-}$ in $\mathcal{E}_{h}^{0}$, we define the jump of scalar function $\phi$  
and the jump of normal component of vector field $\boldsymbol{\sigma}$ by
\begin{align*}
\llbracket\phi\rrbracket  = \phi^{+} - \phi^{-},\quad  
\llbracket \boldsymbol{\sigma}\cdot \boldsymbol{n}\rrbracket = \boldsymbol{\sigma}^{+}\cdot \boldsymbol{n}^{+} 
  + \boldsymbol{\sigma}^{-}\cdot \boldsymbol{n}^{-}, \text{ respectively}.
\end{align*}
\end{definition}

\subsection{Reliability and efficiency of a posteriori error estimator}
From now on, we use $C_{0}, C_{1}, C_{2}$ to denote generic constants, which are independent of the diffusion coefficient $\epsilon$ 
and the mesh size.

For any $(\Bp,w) \in \BH^1(\Ct_h) \times H^1(\Ct_h)$, we define the energy norm 
\begin{align}
\label{energy-error}
& \interleave (\Bp,w)\interleave^2_h \\
\nn
= &\sum_{T\in \Ct_h} \left(  \epsilon^{-1} \| \Bp \|^2_{0,T} + \|w\|^2_{0,T} + \epsilon\|\nabla w\|^2_{0,T} + \alpha^2_T \| {\rm div}\Bp +\bbeta \cdot \nabla w \|^2_{0,T}  \right) \\
\nn
&\quad + \sum_{F\in \Ce^0_h}  \left(  \epsilon^{-\frac{1}{2}} \alpha_F\| \llbracket \Bp\cdot \Bn\rrbracket \|^2_{0,F} + \gamma_F \| \llbracket w \rrbracket \|^2_{0,F} \right) + \sum_{F\in \Ce^{\partial}_h} \gamma_F \| w  \|^2_{0,F}.
\end{align}

We outline  main results by showing the reliability and efficiency of the a posteriori error estimator, respectively, 
in the following theorems. Using the techniques developed in this paper, we would like to emphasize that all the a posteriori error estimates in this paper will also hold for the mixed hybrid method in \cite{Egger2010}.
\begin{theorem} (Reliability)
\label{thm_reliability}
If the Dirichlet data $g$ is contained in $C(\partial \Omega)\cap M_{h}|_{\partial \Omega}$, then 
\begin{align}
\label{ineq_reliability}
\interleave (\boldsymbol{q}-\boldsymbol{q}_{h},u-u_{h})\interleave_h \leq C_{0}\eta.
\end{align}
\end{theorem}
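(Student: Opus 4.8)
The plan is to reduce the energy norm to two interior quantities, peel off a conforming approximation of $u_h$ via Oswald interpolation, and then bound the conforming part by a residual argument in which the weighted test function from assumption (A4) is used to recover the $L^2$–part of the error. First I would notice that several pieces of $\interleave(\boldsymbol q-\boldsymbol q_h,u-u_h)\interleave_h$ already match $\eta$: since $\boldsymbol q\in\boldsymbol H(\mathrm{div},\Omega)$ and $u\in H^1(\Omega)$ with $u=g$ on $\partial\Omega$, we have $\llbracket(\boldsymbol q-\boldsymbol q_h)\cdot\boldsymbol n\rrbracket=-\llbracket\boldsymbol q_h\cdot\boldsymbol n\rrbracket$, $\llbracket u-u_h\rrbracket=-\llbracket u_h\rrbracket$ and $(u-u_h)|_{\partial\Omega}=g-u_h$, so the face terms of the energy norm equal $\sum_{F\in\mathcal E_h^0}(\eta_F^0)^2+\sum_{F\in\mathcal E_h^\partial}(\eta_F^\partial)^2$ exactly. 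Using $\boldsymbol q=-\epsilon\nabla u$ I would write $\boldsymbol q-\boldsymbol q_h=-(\boldsymbol q_h+\epsilon\nabla u_h)-\epsilon\nabla(u-u_h)$, so $\epsilon^{-1}\|\boldsymbol q-\boldsymbol q_h\|_{0,T}^2\lesssim\epsilon^{-1}\|\boldsymbol q_h+\epsilon\nabla u_h\|_{0,T}^2+\epsilon\|\nabla(u-u_h)\|_{0,T}^2$; and from (\ref{cd_eqs_1}) and (\ref{elem_residual}), $\mathrm{div}(\boldsymbol q-\boldsymbol q_h)+\boldsymbol\beta\cdot\nabla(u-u_h)=R_h-c(u-u_h)$, so, since $\alpha_T\le1$, the last interior term of the norm is $\lesssim\alpha_T^2\|R_h\|_{0,T}^2+\|u-u_h\|_{0,T}^2$. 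Hence it suffices to prove $\|u-u_h\|_{0,\Omega}^2+\epsilon\|\nabla(u-u_h)\|_{0,\Omega}^2\lesssim\eta^2$.

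\textbf{Conforming splitting.} Next I would let $u_h^c\in H^1(\Omega)$ be the Oswald (nodal averaging) interpolant of $u_h$, with the boundary nodal values set equal to those of $g$; this is legitimate precisely because $g\in C(\partial\Omega)\cap M_h|_{\partial\Omega}$, and it makes $e^c:=u-u_h^c\in H^1_0(\Omega)$. The Oswald interpolation estimates of Section~3 bound $\sum_Th_T^{-2}\|u_h-u_h^c\|_{0,T}^2+\sum_T\|\nabla(u_h-u_h^c)\|_{0,T}^2$ by $\sum_{F\in\mathcal E_h^0}h_F^{-1}\|\llbracket u_h\rrbracket\|_{0,F}^2+\sum_{F\in\mathcal E_h^\partial}h_F^{-1}\|u_h-g\|_{0,F}^2$. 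Since the definition (\ref{A_gamma}) gives at once $\gamma_F\ge h_F$ and $\gamma_F\ge\epsilon/h_F$, both $\|u_h-u_h^c\|_{0,\Omega}^2$ and $\epsilon\|\nabla(u_h-u_h^c)\|_{0,\Omega}^2$ are $\lesssim\sum_F\gamma_F\big(\|\llbracket u_h\rrbracket\|_{0,F}^2+\|u_h-g\|_{0,F}^2\big)\le\eta^2$. So the task reduces to bounding $\|e^c\|_{0,\Omega}^2+\epsilon\|\nabla e^c\|_{0,\Omega}^2$.

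\textbf{Residual identity, coercivity and weighted test function.} Setting $\mathcal B(v,w):=\epsilon(\nabla v,\nabla w)_{\mathcal T_h}+(\boldsymbol\beta\cdot\nabla v,w)_{\mathcal T_h}+(cv,w)_{\mathcal T_h}$, for any $w\in H^1_0(\Omega)$ equation (\ref{cd_eqs_1}), an elementwise integration by parts, and (\ref{elem_residual}) give
\[
\mathcal B(e^c,w)=(R_h,w)_{\mathcal T_h}+\sum_{F\in\mathcal E_h^0}\langle\llbracket\boldsymbol q_h\cdot\boldsymbol n\rrbracket,w\rangle_F-(\boldsymbol q_h+\epsilon\nabla u_h,\nabla w)_{\mathcal T_h}+\mathcal B(u_h-u_h^c,w).
\]
I would then take $w=e^c$, using (A3) to get $\mathcal B(e^c,e^c)\ge\epsilon\|\nabla e^c\|_{0,\Omega}^2$; and, to capture $\|e^c\|_{0,\Omega}^2$ even in the degenerate case $c-\tfrac12\,\mathrm{div}\,\boldsymbol\beta\equiv0$, I would use the weighted test function $w=\varphi e^c$ with $\varphi=e^{-\psi}$ for $\psi$ as in (A4). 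Integration by parts on the convective term turns $\boldsymbol\beta\cdot\nabla\psi\ge b_0$ into a coercive zeroth–order contribution which, after absorbing the $O(\epsilon)$ cross term from $\epsilon(\nabla e^c,e^c\nabla\varphi)$, yields $\mathcal B(e^c,\varphi e^c)\gtrsim\|e^c\|_{0,\Omega}^2-\delta\epsilon\|\nabla e^c\|_{0,\Omega}^2$ — the N\"avert–type lemma of Section~3. Combining the two choices, $\|e^c\|_{0,\Omega}^2+\epsilon\|\nabla e^c\|_{0,\Omega}^2$ is controlled by the right-hand side of the identity evaluated at $w=e^c$ and $w=\varphi e^c$.

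\textbf{Estimating the right-hand side — the main obstacle.} Finally I would bound each term on the right by Cauchy--Schwarz after subtracting suitable (Cl\'ement/Oswald and face) interpolants of $w$ and invoking the HDG Galerkin orthogonality (\ref{P2})--(\ref{P4}); the trace deviation $u_h-\widehat u_h$ is eliminated through (\ref{P1}), which identifies $\boldsymbol q_h+\epsilon\nabla u_h\in\boldsymbol V_h$ with a lifting of $u_h-\widehat u_h$, so that these contributions fall into $\eta_T$ and the jump terms. The weights $\alpha_T,\alpha_F,\gamma_F$ are tailored so that the resulting interpolation/trace error factors are bounded by $\|w\|_{0,\omega}+\epsilon^{1/2}\|\nabla w\|_{0,\omega}$ on a local patch $\omega$ with $\epsilon$–independent constants; since $\varphi$ is bounded above and below and $\|\nabla(\varphi e^c)\|\lesssim\|\nabla e^c\|+\|e^c\|$, the same bound holds with $w=\varphi e^c$. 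A Young inequality then absorbs $\|e^c\|_{0,\Omega}^2+\epsilon\|\nabla e^c\|_{0,\Omega}^2$ into the left-hand side, leaving $\lesssim\eta^2$ plus the already controlled $\|u_h-u_h^c\|_{0,\Omega}^2+\epsilon\|\nabla(u_h-u_h^c)\|_{0,\Omega}^2$. The hardest part, where essentially all the technical work lies, will be establishing these interpolation and trace estimates robustly in $\epsilon$: one must split into the regimes $h_S\lessgtr\sqrt\epsilon$ and $\epsilon\lessgtr\|\boldsymbol\beta\|_{L^\infty(F)}h_F$ and check that the two competing expressions defining $\gamma_F$ in (\ref{A_gamma}) (and $\alpha_S$ in (\ref{A_alpha})) produce $\epsilon$–uniform constants in each case, including the convective face contributions and the effect of the weight $\varphi$.
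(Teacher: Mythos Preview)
Your proposal is correct in its overall strategy and uses the same key tools as the paper (Oswald interpolation, Cl\'ement interpolation, the weighted test function from (A4), and the HDG consistency relation derived from (\ref{P2})--(\ref{P4})), but the organization is genuinely different.

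The paper never reduces to a primal conforming error. Instead it works directly with the mixed errors $(\boldsymbol e_{\boldsymbol q},\boldsymbol e_u)=(\boldsymbol q-\boldsymbol q_h,u-u_h)$: Lemma~\ref{pre-lemma} is a weighted coercivity for the quantity $\epsilon^{-1}(\boldsymbol e_{\boldsymbol q},\varphi\boldsymbol e_{\boldsymbol q})-(\boldsymbol e_u,\nabla\varphi\cdot\boldsymbol e_{\boldsymbol q})-\tfrac12(\boldsymbol\beta\cdot\nabla\varphi\,\boldsymbol e_u,\boldsymbol e_u)+((c-\tfrac12\mathrm{div}\boldsymbol\beta)\boldsymbol e_u,\varphi\boldsymbol e_u)$, and this forces the weight to be $\varphi=e^{-\psi}+\chi$ with a large additive constant $\chi$, because the cross term $(\boldsymbol e_u,\nabla\varphi\cdot\boldsymbol e_{\boldsymbol q})$ carries no $\epsilon$ and must be absorbed by $\epsilon^{-1}\chi\|\boldsymbol e_{\boldsymbol q}\|^2$. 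Then the Oswald interpolant $u_h^{\mathcal I}$ is inserted only inside the manipulation of $\epsilon^{-1}(\boldsymbol e_{\boldsymbol q},\varphi\boldsymbol e_{\boldsymbol q})$, leading to the decomposition $I_1+I_2+I_3+I_4$. By contrast, you first eliminate $\boldsymbol e_{\boldsymbol q}$ via $\boldsymbol q-\boldsymbol q_h=-(\boldsymbol q_h+\epsilon\nabla u_h)-\epsilon\nabla(u-u_h)$, split $u-u_h$ by Oswald, and test the \emph{primal} bilinear form $\mathcal B$ with $e^c$ and $\varphi e^c$; here the cross term $\epsilon(\nabla e^c,e^c\nabla\varphi)$ already carries an $\epsilon$, so the plain weight $\varphi=e^{-\psi}$ suffices. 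Your route is closer to the DG analyses of Sch\"otzau--Zhu, while the paper's route is native to the mixed HDG structure.

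The one place where you are vague but the paper is explicit is the origin of the $\min$ in $\gamma_F$. In the paper this comes from handling the convective face contribution in $I_2+I_3$ by two distinct arguments: \textbf{Approach~A} integrates by parts and produces the factor $h_F/\epsilon+\epsilon^{-1/2}\alpha_F$ in front of $\|\boldsymbol\beta\|_{L^\infty(F)}\|\llbracket u_h\rrbracket\|_{0,F}^2$, while \textbf{Approach~B} uses a trace inequality together with $L^2$-stability of $\pi_h$ and produces $\|\boldsymbol\beta\|_{L^\infty(F)}h_F^{-1}$; taking the better of the two gives (\ref{A_gamma}). Your ``split into regimes'' paragraph points at exactly this, but when you carry it out you will need both arguments, not a single case distinction.
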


\begin{remark}
In the diffusion dominated case $\epsilon = O(1), \boldsymbol{\beta} = 0$ and $c\geq 0$, the parameter $\gamma_F$ in (\ref{A_gamma}) would be $\frac{1}{h_F}$, and the proposed a posterior error estimators coincide with the ones in \cite{Cockburn13}. In this particular case, the total energy norm can be defined as $ \interleave (\Bp,w) \interleave^2 = \sum_{T\in \Ct_h} \left( \| \Bp \|^2_{0,T} + \|w\|^2_{0,T} + \|\nabla w\|^2_{0,T} \right) $.

\end{remark}

\begin{theorem} (Efficiency)
\label{thm_efficiency}
We have 
\begin{subequations}
\label{ineq_efficiency}
\begin{align}
\label{in_face_efficiency}
\eta_{F}^{0} & \leq C_{1} \left( \epsilon^{-\frac{1}{2}} \alpha_F \| \llbracket (\Bq - \boldsymbol{q}_{h}) \cdot\Bn \rrbracket  \|^2_{0,F} 
+ \gamma_F\| \llbracket  u - u_{h} \rrbracket  \|^2_{0,F} \right)^{\frac{1}{2}},\quad \forall F \in \mathcal{E}_{h}^{0},\\
\label{boundary_face_efficiency}
\eta_{F}^{\partial} & \leq C_{1} \left(\gamma_F\|  g-u_h   \|^2_{0,F} \right)^{\frac{1}{2}},\quad \forall F \in \mathcal{E}_{h}^{\partial},\\
\label{elem_efficiency}
\eta_{T} & \leq C_{1} (\epsilon^{-1} \| \boldsymbol{q} -\boldsymbol{q}_{h} \|^2_{0,T} 
+ \| u - u_{h}\|^2_{0,T} + \epsilon\|\nabla (u - u_{h})\|^2_{0,T} \\
\nonumber
&\qquad  + \alpha^2_T \| {\rm div}(\boldsymbol{q} - \boldsymbol{q}_{h}) +\bbeta \cdot \nabla (u - u_{h}) \|^2_{0,T}
+{osc}^2_h(R_h,T))^{\frac{1}{2}}, 
\quad \forall T \in \mathcal{T}_{h}.
\end{align}
\end{subequations}
Here, the data oscillation term ${osc}^2_h(R_h,T) : = \alpha^2_T \| R_h -P_W R_h \|^2_{0,T}$, and 
$P_{W}$ is the $L^{2}$ orthogonal projection onto $W_{h}$.
\end{theorem}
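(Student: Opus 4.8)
\textbf{Proof plan for Theorem \ref{thm_efficiency} (Efficiency).}

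The strategy is to establish the three local bounds separately, using only local arguments (bubble functions and trace/inverse inequalities), and in each case inserting the exact PDE together with the definition of the residual $R_h$ in \eqref{elem_residual} so that the estimator components get rewritten in terms of the true errors $\Bq-\Bq_h$ and $u-u_h$. I would begin with the boundary-face term \eqref{boundary_face_efficiency}, which is immediate: since $u=g$ on $\partial\Omega$, we have $g-u_h = u-u_h$ on every $F\in\Ce_h^\partial$, so $\eta_F^\partial = \gamma_F^{1/2}\|g-u_h\|_{0,F} = \gamma_F^{1/2}\|u-u_h\|_{0,F}$, giving \eqref{boundary_face_efficiency} with $C_1=1$. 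Similarly for the interior jump term \eqref{in_face_efficiency}: because the exact solution satisfies $u\in H^1(\Omega)$ and $\Bq\in \bfH(\mathrm{div};\Omega)$, we have $\llbracket u\rrbracket=0$ and $\llbracket \Bq\cdot\Bn\rrbracket=0$ on each $F\in\Ce_h^0$, hence $\llbracket u_h\rrbracket = -\llbracket u-u_h\rrbracket$ and $\llbracket \Bq_h\cdot\Bn\rrbracket = -\llbracket (\Bq-\Bq_h)\cdot\Bn\rrbracket$, so $\eta_F^0$ equals the right-hand side of \eqref{in_face_efficiency} verbatim with $C_1=1$. So the two face estimates are essentially tautological given the conformity of the exact solution.

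The element term \eqref{elem_efficiency} is where the actual work lies. Note $\eta_T^2 = \alpha_T^2\|R_h\|_{0,T}^2 + \epsilon^{-1}\|\Bq_h+\epsilon\nabla u_h\|_{0,T}^2$. For the second piece, using the exact relation $\Bq = -\epsilon\nabla u$ from \eqref{HDG1}, we have $\Bq_h+\epsilon\nabla u_h = (\Bq_h-\Bq)+\epsilon\nabla(u-u_h) = -(\Bq-\Bq_h) - \epsilon\nabla(u_h-u)$, so $\epsilon^{-1}\|\Bq_h+\epsilon\nabla u_h\|_{0,T}^2 \lesssim \epsilon^{-1}\|\Bq-\Bq_h\|_{0,T}^2 + \epsilon\|\nabla(u-u_h)\|_{0,T}^2$, which is already controlled by terms on the right of \eqref{elem_efficiency}. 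For the residual piece I would split $R_h = P_W R_h + (R_h - P_W R_h)$; the second contributes exactly the oscillation term $\mathrm{osc}_h^2(R_h,T)$. For $P_W R_h$, the standard interior-bubble-function technique applies: let $b_T$ be the cubic (or degree-$(d+1)$) bubble on $T$, test the PDE residual against $w_T := (P_W R_h)\, b_T \in H_0^1(T)$, and use $f = \mathrm{div}\,\Bq + \bbeta\cdot\nabla u + cu$ to obtain
\begin{equation*}
\|P_W R_h\|_{0,T}^2 \lesssim \int_T (P_W R_h)\, w_T = \int_T \big(\mathrm{div}(\Bq-\Bq_h) + \bbeta\cdot\nabla(u-u_h) + c(u-u_h)\big) w_T.
\end{equation*}
Then I would integrate by parts the $\mathrm{div}(\Bq-\Bq_h)$ term (boundary terms vanish since $w_T\in H_0^1(T)$), apply Cauchy--Schwarz, and use the inverse estimates $\|w_T\|_{0,T}\lesssim \|P_W R_h\|_{0,T}$, $\|\nabla w_T\|_{0,T}\lesssim h_T^{-1}\|P_W R_h\|_{0,T}$. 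The outcome is a bound of the form $\|P_W R_h\|_{0,T} \lesssim h_T^{-1}\|\Bq-\Bq_h\|_{0,T} + \|\nabla(u-u_h)\|_{0,T} + \|u-u_h\|_{0,T}$ (the middle term carrying a factor $\|\bbeta\|_{L^\infty}$, the last carrying $\|c\|_{L^\infty}$).

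The delicate point—and the step I expect to be the main obstacle—is the $\epsilon$-scaling: I must multiply the last bound by $\alpha_T$ and check that $\alpha_T^2\|P_W R_h\|_{0,T}^2$ is dominated by the right-hand side of \eqref{elem_efficiency}. Since $\alpha_T=\min\{h_T\epsilon^{-1/2},1\}$, one has $\alpha_T h_T^{-1} \le \epsilon^{-1/2}$, so $\alpha_T^2 h_T^{-2}\|\Bq-\Bq_h\|_{0,T}^2 \le \epsilon^{-1}\|\Bq-\Bq_h\|_{0,T}^2$, which is fine; the $\bbeta$-term gives $\alpha_T^2\|\nabla(u-u_h)\|_{0,T}^2$, and I would bound $\alpha_T^2 \le \epsilon$ when $h_T\epsilon^{-1/2}\ge 1$... but that fails when $\alpha_T = h_T\epsilon^{-1/2}<1$, i.e. $\alpha_T^2 = h_T^2\epsilon^{-1}$ could exceed $\epsilon$. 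This is precisely why the term $\alpha_T^2\|\mathrm{div}(\Bq-\Bq_h)+\bbeta\cdot\nabla(u-u_h)\|_{0,T}^2$ appears on the right-hand side of \eqref{elem_efficiency}: rather than bounding the bubble integral by distributing Cauchy--Schwarz term-by-term, I should keep $\mathrm{div}(\Bq-\Bq_h)+\bbeta\cdot\nabla(u-u_h)$ together as a single quantity, writing $\int_T(P_W R_h)w_T = \int_T\big((\mathrm{div}(\Bq-\Bq_h)+\bbeta\cdot\nabla(u-u_h)) + c(u-u_h)\big)w_T$ and applying Cauchy--Schwarz to get $\|P_W R_h\|_{0,T}\lesssim \|\mathrm{div}(\Bq-\Bq_h)+\bbeta\cdot\nabla(u-u_h)\|_{0,T} + \|u-u_h\|_{0,T}$, so $\alpha_T^2\|P_W R_h\|_{0,T}^2 \lesssim \alpha_T^2\|\mathrm{div}(\Bq-\Bq_h)+\bbeta\cdot\nabla(u-u_h)\|_{0,T}^2 + \|u-u_h\|_{0,T}^2$ since $\alpha_T\le 1$. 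Collecting the element, boundary-face, and interior-face estimates yields \eqref{ineq_efficiency} with a constant $C_1$ depending only on the shape-regularity of $\mathcal T_h$, the polynomial degree $p$, and $\|\bbeta\|_{W^{1,\infty}}$, $\|c\|_{L^\infty}$.
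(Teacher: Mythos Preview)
Your approach is essentially the paper's: the face estimates are indeed immediate from the conformity of $(\Bq,u)$, the bound $\epsilon^{-1}\|\Bq_h+\epsilon\nabla u_h\|_{0,T}^2\lesssim\epsilon^{-1}\|\Bq-\Bq_h\|_{0,T}^2+\epsilon\|\nabla(u-u_h)\|_{0,T}^2$ is exactly the paper's first step, and the bubble argument for $\alpha_T^2\|R_h\|_{0,T}^2$ keeping $\mathrm{div}(\Bq-\Bq_h)+\bbeta\cdot\nabla(u-u_h)$ together (rather than integrating by parts) is precisely Lemma~\ref{lemma_control_eta2}, proved in Appendix~C. One small fix: the equality you display, $\int_T(P_WR_h)\,w_T=\int_T R_h\,w_T$, does not hold because $w_T=(P_WR_h)\,b_T$ has degree larger than $p$ and hence lies outside $W_h$; the paper writes $(P_WR_h,w_T)_T=(R_h,w_T)_T-(R_h-P_WR_h,w_T)_T$, bounds the correction by $\|R_h-P_WR_h\|_{0,T}\|P_WR_h\|_{0,T}$, and absorbs it as a second oscillation contribution.
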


In addition, for any $F\in \mathcal{E}_{h}^{0}$ with $h_{F}\leq \mathcal{O}(\epsilon)$ or any $T\in \mathcal{T}_{h}$ with $h_{T}\leq \mathcal{O}(\epsilon)$, we have  
the following efficiency results with explicit dependence on $\epsilon$.
\begin{theorem} (Efficiency on refined element)
\label{thm_refined_efficiency}
For any $F\in \mathcal{E}_{h}^{0}$, if $h_{F}\leq \mathcal{O}(\epsilon)$, then
\begin{align}
\label{ineq_refined_efficiency}
(\eta_{F}^{0})^{2} & \leq C_{2} \sum_{T \in \omega_F} (  \epsilon^{-1}\| \Bq-\Bq_h \|^2_{0,T} + \epsilon\| \nabla(u-u_h) \|^2_{0,T}\\
\nonumber
& \qquad \qquad  +\|u-u_h\|^2_{0,T} + osc^2_h(R_h,T)  ).
\end{align}
For any $T\in \mathcal{T}_{h}$, if $h_{T}\leq \mathcal{O}(\epsilon)$, then
\begin{align}
\label{ineq_refined_efficiency2}
\eta_{T}^{2} & \leq C_{2} \big(  \epsilon^{-1}\| \Bq-\Bq_h \|^2_{0,T} + \epsilon\| \nabla(u-u_h) \|^2_{0,T}\\
\nonumber
& \qquad \qquad  +\|u-u_h\|^2_{0,T} + osc^2_h(R_h,T)  \big).
\end{align}
Here, $\omega_F$ is  the union of elements sharing the common face $F$, and ${osc}^2_h(R_h,T)$ is 
the data oscillation term introduced in Theorem~\ref{thm_efficiency}.
\end{theorem}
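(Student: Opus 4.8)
The plan is to prove the two estimates \eqref{ineq_refined_efficiency} and \eqref{ineq_refined_efficiency2} separately, starting from the observation that in the refined regime the parameters collapse to their diffusion‑dominated values: if $h_S\le\mathcal{O}(\epsilon)$ for an element or face $S$, then $\alpha_S=h_S\epsilon^{-1/2}$, while $h_F\le\mathcal{O}(\epsilon)$ forces $\gamma_F\lesssim \epsilon/h_F$ (look at the first argument of the minimum in \eqref{A_gamma} with $\alpha_F=h_F\epsilon^{-1/2}$, and use $h_F/\epsilon\le\mathcal{O}(1)$, $\epsilon\le1$, $\epsilon/h_F\gtrsim1$ to absorb the lower–order terms) and $\tau|_F\lesssim \epsilon/h_T$ (from \eqref{tau_hdg}, since $\epsilon/h_T\gtrsim1$ lets us absorb $\|\bbeta\|_{L^\infty}$). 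Here the generic constant $C_2$ may depend on $\|\bbeta\|_{L^\infty}$ and $\|c\|_{L^\infty}$ but not on $\epsilon$ or $h$. Throughout I will use the exact identities $\Bq+\epsilon\nabla u=0$ and $R_h={\rm div}(\Bq-\Bq_h)+\bbeta\cdot\nabla(u-u_h)+c(u-u_h)$ obtained from \eqref{HDG1} and \eqref{elem_residual}.

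For \eqref{ineq_refined_efficiency2}, the term $\epsilon^{-1}\|\Bq_h+\epsilon\nabla u_h\|_{0,T}^2$ is bounded at once by $2\epsilon^{-1}\|\Bq-\Bq_h\|_{0,T}^2+2\epsilon\|\nabla(u-u_h)\|_{0,T}^2$ using $\Bq_h+\epsilon\nabla u_h=(\Bq_h-\Bq)+\epsilon\nabla(u_h-u)$. For $\alpha_T^2\|R_h\|_{0,T}^2$ I would run the standard interior–bubble argument: with $b_T$ the element bubble and $v_T=b_TP_WR_h$, testing $\int_T R_h v_T$ against the identity for $R_h$ and integrating the ${\rm div}$–term by parts gives $\|P_WR_h\|_{0,T}\lesssim h_T^{-1}\|\Bq-\Bq_h\|_{0,T}+\|\bbeta\|_{L^\infty}\|\nabla(u-u_h)\|_{0,T}+\|c\|_{L^\infty}\|u-u_h\|_{0,T}+\|R_h-P_WR_h\|_{0,T}$; then $\alpha_T^2\|R_h\|_{0,T}^2\le 2\alpha_T^2\|P_WR_h\|_{0,T}^2+2\,{\rm osc}_h^2(R_h,T)$, and since $\alpha_T^2=h_T^2\epsilon^{-1}\le\mathcal{O}(\epsilon)$ in this regime, the factors $\alpha_T^2h_T^{-2}=\epsilon^{-1}$, $\alpha_T^2\le\mathcal{O}(\epsilon)$, $\alpha_T^2\le1$ convert the four pieces exactly into $\epsilon^{-1}\|\Bq-\Bq_h\|_{0,T}^2$, $\epsilon\|\nabla(u-u_h)\|_{0,T}^2$, $\|u-u_h\|_{0,T}^2$, ${\rm osc}_h^2(R_h,T)$. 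This is where $h_T\le\mathcal{O}(\epsilon)$ is genuinely used: without it $\alpha_T^2\|\bbeta\|_{L^\infty}^2$ is not controlled by $\epsilon$, which is precisely why the general bound \eqref{elem_efficiency} must keep the extra term $\alpha_T^2\|{\rm div}(\Bq-\Bq_h)+\bbeta\cdot\nabla(u-u_h)\|_{0,T}^2$.

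For \eqref{ineq_refined_efficiency} the naive route — writing $\llbracket u_h\rrbracket=\llbracket u_h-u\rrbracket$ and invoking the scaled trace inequality on $T^\pm$ — fails: it produces a term $\epsilon h_F^{-2}\|u-u_h\|_{0,T}^2$, which is not $\mathcal{O}(\|u-u_h\|_{0,T}^2)$ once $h_F\ll\sqrt\epsilon$. This is the main obstacle, and the fix is to use the HDG structure rather than the $H^1$–conformity of $u$. From \eqref{P4} and \eqref{numerical-flux}, testing against $\mu\in M(F)$ on an interior face $F=\partial T^+\cap\partial T^-$ and using $\Bn^+=-\Bn^-$ to cancel the $\bbeta$–term shows $\llbracket\Bq_h\cdot\Bn\rrbracket=-\tau^+(u_h^+-\widehat u_h)-\tau^-(u_h^--\widehat u_h)$ pointwise on $F$; also $\llbracket u_h\rrbracket=(u_h^+-\widehat u_h)-(u_h^--\widehat u_h)$. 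So both jumps are controlled by $\|u_h^\pm-\widehat u_h\|_{0,\partial T^\pm}$. Next, integrating \eqref{P1} by parts elementwise yields $(\epsilon^{-1}\Bq_h+\nabla u_h,\Br)_T=\langle u_h-\widehat u_h,\Br\cdot\Bn\rangle_{\partial T}$ for all $\Br\in\BV(T)$; choosing a standard normal–trace lifting $\Br\in\BV(T)$ with $\Br\cdot\Bn=u_h-\widehat u_h$ on $\partial T$ and $\|\Br\|_{0,T}\lesssim h_T^{1/2}\|u_h-\widehat u_h\|_{0,\partial T}$ gives $\|u_h-\widehat u_h\|_{0,\partial T}\lesssim h_T^{1/2}\epsilon^{-1}\|\Bq_h+\epsilon\nabla u_h\|_{0,T}$, and $\Bq+\epsilon\nabla u=0$ turns the right–hand side into $h_T^{1/2}\big(\epsilon^{-1}\|\Bq-\Bq_h\|_{0,T}^2+\epsilon\|\nabla(u-u_h)\|_{0,T}^2\big)^{1/2}$.

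Finally I would verify the weight cancellations, which close the argument only in the refined regime: using $\gamma_F\lesssim\epsilon/h_F$ and $h_F\sim h_T$,
$\gamma_F\|\llbracket u_h\rrbracket\|_{0,F}^2\lesssim (\epsilon/h_F)\sum_{T\in\omega_F}h_T\epsilon^{-2}\|\Bq_h+\epsilon\nabla u_h\|_{0,T}^2\lesssim \epsilon^{-1}\sum_{T\in\omega_F}\|\Bq_h+\epsilon\nabla u_h\|_{0,T}^2$,
and using in addition $\epsilon^{-1/2}\alpha_F=h_F\epsilon^{-1}$ and $\tau^\pm\lesssim\epsilon/h_{T^\pm}$,
$\epsilon^{-1/2}\alpha_F\|\llbracket\Bq_h\cdot\Bn\rrbracket\|_{0,F}^2\lesssim (h_F/\epsilon)\sum_{T\in\omega_F}(\tau)^2h_T\epsilon^{-2}\|\Bq_h+\epsilon\nabla u_h\|_{0,T}^2\lesssim \epsilon^{-1}\sum_{T\in\omega_F}\|\Bq_h+\epsilon\nabla u_h\|_{0,T}^2$.
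Inserting the bound $\epsilon^{-1}\|\Bq_h+\epsilon\nabla u_h\|_{0,T}^2\le 2\epsilon^{-1}\|\Bq-\Bq_h\|_{0,T}^2+2\epsilon\|\nabla(u-u_h)\|_{0,T}^2$ gives \eqref{ineq_refined_efficiency} (in fact with only the flux– and gradient–error terms, which is consistent with the stated right–hand side). The delicate point is thus the dependence on $h_F\le\mathcal{O}(\epsilon)$: it is exactly this that makes $\gamma_F\sim\epsilon/h_F$ and $\tau\sim\epsilon/h_T$ (rather than $\tau\sim\|\bbeta\|_{L^\infty}$), so that the powers of $h$ and $\epsilon$ cancel; on coarser meshes this chain of cancellations breaks down, which is why only the scaling–robust (rather than explicit–in–$\epsilon$) bounds of Theorem~\ref{thm_efficiency} hold there.
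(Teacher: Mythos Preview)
Your argument for \eqref{ineq_refined_efficiency2} coincides with the paper's: the paper combines \eqref{eff-1} (obtained by exactly your interior-bubble computation after integrating the ${\rm div}$-term by parts) with \eqref{control_eta1}, and then uses $\alpha_T^2\le\mathcal{O}(\epsilon)$ to absorb the convection and reaction pieces.

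For \eqref{ineq_refined_efficiency} you take a genuinely different route. The paper treats the two jump terms separately. For $\epsilon^{-1/2}\alpha_F\|\llbracket\Bq_h\cdot\Bn\rrbracket\|_{0,F}^2$ (Lemma~\ref{lemma_refined_qn}) it runs a face-bubble argument, testing the identity ${\rm div}(\Bq-\Bq_h)=R_h-\bbeta\cdot\nabla(u-u_h)-c(u-u_h)$ against $B_F\llbracket\Bq_h\cdot\Bn\rrbracket$ and then invoking \eqref{eff-1}. For $\gamma_F\|\llbracket u_h\rrbracket\|_{0,F}^2$ (Lemma~\ref{lemma_refined_u}) it splits $\llbracket u_h\rrbracket$ into its piecewise-constant part and the remainder, bounds the first by testing \eqref{P1} with a lowest-order Raviart--Thomas function supported on $\omega_F$, and bounds the second via $(I-P_{M_0})\llbracket u-u_h\rrbracket$ and a trace/Poincar\'e inequality. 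You instead exploit the flux continuity \eqref{P4} to express \emph{both} jumps in terms of $u_h^\pm-\widehat u_h$, and then control $\|u_h-\widehat u_h\|_{0,\partial T}$ by a single BDM-type normal-trace lifting in \eqref{P1}. This is more economical: it avoids the face bubble and the Poincar\'e splitting altogether, and your bound on $(\eta_F^0)^2$ involves only $\epsilon^{-1}\|\Bq-\Bq_h\|_{0,T}^2+\epsilon\|\nabla(u-u_h)\|_{0,T}^2$, whereas the paper's face-bubble route pulls in the $\|u-u_h\|_{0,T}^2$ and ${osc}_h^2(R_h,T)$ terms through \eqref{eff-1}. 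The trade-off is that your argument hinges on the specific HDG transmission condition and the choice \eqref{tau_hdg} (via $\tau\lesssim\epsilon/h_T$ when $h_F\le\mathcal{O}(\epsilon)$), while the paper's bubble argument for the flux jump is the standard Verf\"urth technique and would transfer unchanged to other mixed discretizations.
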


\begin{remark}
For the diffusion dominated case $\epsilon = O(1)$, $\boldsymbol{\beta} = 0$ and $c\geq 0$, the above efficiency estimates (\ref{ineq_refined_efficiency}) and (\ref{ineq_refined_efficiency2}) also coincide with the efficiency results in \cite{Cockburn13}.

\end{remark}

\section{Auxiliary results}
We collect some auxiliary results in this section for the proof of reliability and efficiency. 

For every element $T\in \Ct_h$, we denote $\Omega_T$ by the union of all elements that share at least one point with $T$. For any face $F\in \Ce_h$, 
the set $\Omega_F$ is defined analogously, meanwhile, $\omega_F$ is defined to be the union of elements sharing the common face $F$.
The following Cl\'{e}ment-type interpolation is crucial for the proof of reliability.
\begin{definition}\label{define-clement}
(cf. \cite{Verfurth96}) One can define a linear mapping $\pi_h: \ L^1(\Omega) \rightarrow W^c_{1,h} \cap H^1_0(\Omega) $ via
\begin{align}
\pi_h v := \sum_{z \in \Cn^0_h} \left(   \frac{1}{|\Omega_z|}  \int_{\Omega_z} v \, dx\right) \phi_z, \nn
\end{align}
where $\phi_z$ is $P_1$ nodal bases function for every vertex $z \in \Cn_h^0$, $\Omega_z$ is the support of a nodal bases function $\phi_z$ which consists of all elements that share the vertex $z$, and $W^c_{1,h}$ is the corresponding conforming $P_1$ finite element space defined by
$
W^c_{1,h} := \{ w \in C(\Omega) \, : \, w |_{T} \in \Cp_1(T), T \in \Ct_h \} .
$
\end{definition}

The interpolation $\pi_{h}$ in Definition~\ref{define-clement} has the following approximation properties.
\begin{lemma}\label{ax_re_1}
{\rm (cf. \cite{Verfurth98,Verfurth05})} For any $T \in \Ct_h$ and $F\in \Ce^0_h$, the following estimates hold for any function $v \in H^1_0(\Omega)$:
\begin{subequations}
\begin{align}
\| (I-\pi_h)v \|_{0,T} &\leq C_1 \alpha_T \left( \Vert v\Vert_{0, \Omega_{T}}^{2} + \epsilon\Vert \nabla v\Vert_{0,\Omega_{T}}^{2}\right)^{\frac{1}{2}},\\
\| (I-\pi_h)v \|_{0,F} &\leq C_2  \epsilon^{-\frac{1}{4}} \alpha_F^{\frac{1}{2}}   \left( \Vert v\Vert_{0, \Omega_{T}}^{2} + \epsilon\Vert \nabla v\Vert_{0,\Omega_{T}}^{2}\right)^{\frac{1}{2}}.
\end{align}
\end{subequations}
\end{lemma}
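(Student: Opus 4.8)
The goal is to establish the two approximation estimates for the Clément-type operator $\pi_h$ defined in Definition~\ref{define-clement}, with the $\epsilon$-weighted scaling dictated by the parameter $\alpha_T = \min\{h_T\epsilon^{-1/2},1\}$. The plan is to build these bounds from the classical (unweighted) Clément interpolation estimates by a careful case distinction on the size of $h_T$ relative to $\sqrt{\epsilon}$.

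\medskip

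\emph{Step 1: Classical Clément estimates.} First I would recall the standard local approximation properties of $\pi_h$ (cf. \cite{Verfurth96}): for $v\in H^1_0(\Omega)$,
\begin{align}
\|(I-\pi_h)v\|_{0,T} \le C\, h_T \|\nabla v\|_{0,\Omega_T}, \qquad \|(I-\pi_h)v\|_{0,F} \le C\, h_F^{1/2}\|\nabla v\|_{0,\Omega_F}, \nn
\end{align}
together with the trivial $L^2$-stability bound $\|(I-\pi_h)v\|_{0,T}\le \|v\|_{0,T}+\|\pi_h v\|_{0,T}\le C\|v\|_{0,\Omega_T}$ (and its face analogue via a trace/scaling argument). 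Here shape-regularity gives $h_F\simeq h_T$ for $F\subset\partial T$, and the finite overlap of the patches $\Omega_T$ is used throughout.

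\medskip

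\emph{Step 2: Case distinction to insert the $\epsilon$-weights.} For the element estimate, when $h_T\le\sqrt{\epsilon}$ we have $\alpha_T = h_T\epsilon^{-1/2}$, so $h_T\|\nabla v\|_{0,\Omega_T} = \alpha_T\,\epsilon^{1/2}\|\nabla v\|_{0,\Omega_T}\le \alpha_T(\|v\|^2_{0,\Omega_T}+\epsilon\|\nabla v\|^2_{0,\Omega_T})^{1/2}$, which is exactly the claimed bound. When $h_T>\sqrt{\epsilon}$ we have $\alpha_T=1$, and the trivial stability bound gives $\|(I-\pi_h)v\|_{0,T}\le C\|v\|_{0,\Omega_T}\le C\alpha_T(\|v\|^2_{0,\Omega_T}+\epsilon\|\nabla v\|^2_{0,\Omega_T})^{1/2}$. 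Combining the two cases yields the first estimate. For the face estimate, when $h_F\le\sqrt{\epsilon}$ we have $\alpha_F=h_F\epsilon^{-1/2}$, so $\epsilon^{-1/4}\alpha_F^{1/2} = \epsilon^{-1/4}(h_F\epsilon^{-1/2})^{1/2} = h_F^{1/2}\epsilon^{-1/2}$, and then $h_F^{1/2}\|\nabla v\|_{0,\Omega_F} = \epsilon^{-1/4}\alpha_F^{1/2}\cdot\epsilon^{1/2}\|\nabla v\|_{0,\Omega_F}\le \epsilon^{-1/4}\alpha_F^{1/2}(\|v\|^2_{0,\Omega_T}+\epsilon\|\nabla v\|^2_{0,\Omega_T})^{1/2}$. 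When $h_F>\sqrt{\epsilon}$, $\alpha_F=1$ so $\epsilon^{-1/4}\alpha_F^{1/2}=\epsilon^{-1/4}$; here I would use a scaled trace inequality $\|w\|^2_{0,F}\le C(h_F^{-1}\|w\|^2_{0,T}+h_F\|\nabla w\|^2_{0,T})$ applied to $w=(I-\pi_h)v$, combined with the element bounds from Step~1, to get $\|(I-\pi_h)v\|_{0,F}\le C h_F^{-1/2}\|v\|_{0,\Omega_T}+Ch_F^{1/2}\|\nabla v\|_{0,\Omega_T}$; since $h_F>\sqrt{\epsilon}$ gives $h_F^{-1/2}<\epsilon^{-1/4}$ and $h_F^{1/2}\|\nabla v\|\le h_F^{-1/2}\cdot h_F\|\nabla v\|$, one checks both terms are dominated by $C\epsilon^{-1/4}(\|v\|^2_{0,\Omega_T}+\epsilon\|\nabla v\|^2_{0,\Omega_T})^{1/2}$ (using $h_F\|\nabla v\|$ versus... more carefully: $h_F^{1/2}\|\nabla v\|_{0,\Omega_T} = \epsilon^{-1/4}\cdot\epsilon^{1/4}h_F^{1/2}\|\nabla v\|$ and $\epsilon^{1/4}h_F^{1/2} = (\epsilon^{1/2}h_F)^{1/2}\le h_F \le$ diam$(\Omega)$ contributes a harmless constant, or better, bound via the $h_F\le$ const and reabsorb — but cleanest is to also invoke the standard first-order estimate $\|(I-\pi_h)v\|_{0,F}\le Ch_F^{1/2}\|\nabla v\|_{0,\Omega_F}$ directly and note $h_F^{1/2}=\epsilon^{-1/4}(\epsilon^{1/2}h_F)^{1/2}\cdot h_F^{-1/2}\cdot h_F^{1/2}$ — I would streamline this in the writeup).

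\medskip

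\emph{Main obstacle.} The routine part is the case analysis; the only genuinely delicate point is the face estimate in the regime $h_F>\sqrt{\epsilon}$, where one must juggle a scaled trace inequality against both the $L^2$-stability and the first-order Clément bounds and verify that the powers of $h_F$ and $\epsilon$ line up to produce exactly $\epsilon^{-1/4}\alpha_F^{1/2}$ without a spurious factor. I expect this bookkeeping — choosing, for each term, whether to spend an available power of $h_F$ on matching $\epsilon^{-1/4}$ or on pairing with $\epsilon^{1/2}\|\nabla v\|^2$ — to be the step requiring the most care; everything else follows from shape-regularity, finite patch overlap, and the definition of $\alpha_S$.
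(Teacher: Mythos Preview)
The paper itself does not prove this lemma; it merely cites Lemmas~3.1 and~3.2 of \cite{Verfurth98} (see the remark immediately following the statement). Your overall strategy --- combine the classical Cl\'ement bounds $\|(I-\pi_h)v\|_{0,T}\le C\min\{h_T\|\nabla v\|_{0,\Omega_T},\,\|v\|_{0,\Omega_T}\}$ with a case split on $h_S\lessgtr\sqrt{\epsilon}$ --- is exactly the standard one and handles the element estimate cleanly.

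There is, however, a genuine gap in your face estimate for the regime $h_F>\sqrt{\epsilon}$. The additive trace inequality $\|w\|_{0,F}^2\le C(h_F^{-1}\|w\|_{0,T}^2+h_F\|\nabla w\|_{0,T}^2)$ applied to $w=(I-\pi_h)v$, together with $H^1$-stability $\|\nabla(I-\pi_h)v\|_{0,T}\le C\|\nabla v\|_{0,\Omega_T}$, produces the term $h_F\|\nabla v\|_{0,\Omega_T}^2$. You need this to be bounded by $C\epsilon^{-1/2}\alpha_F(\|v\|^2+\epsilon\|\nabla v\|^2)$; when $\alpha_F=1$ this means $h_F\|\nabla v\|^2\le C\epsilon^{1/2}\|\nabla v\|^2$, i.e.\ $h_F\le C\sqrt{\epsilon}$, which contradicts the case hypothesis. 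Your attempted rescue --- writing $\epsilon^{1/4}h_F^{1/2}\le h_F\le\mathrm{diam}(\Omega)$ --- leaves you with $C\epsilon^{-1/4}\|\nabla v\|$, and $\|\nabla v\|$ is \emph{not} controlled by $(\|v\|^2+\epsilon\|\nabla v\|^2)^{1/2}$ uniformly in $\epsilon$.

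The fix is to use the \emph{multiplicative} trace inequality
\[
\|w\|_{0,F}^2 \le C\big(h_F^{-1}\|w\|_{0,T}^2 + \|w\|_{0,T}\|\nabla w\|_{0,T}\big).
\]
With $w=(I-\pi_h)v$, the first term gives $h_F^{-1}\alpha_T^2(\|v\|^2+\epsilon\|\nabla v\|^2)$ and the second gives $\alpha_T(\|v\|^2+\epsilon\|\nabla v\|^2)^{1/2}\cdot\|\nabla v\|_{0,\Omega_T}\le \alpha_T\epsilon^{-1/2}(\|v\|^2+\epsilon\|\nabla v\|^2)$. One then checks directly that both $h_F^{-1}\alpha_T^2$ and $\alpha_T\epsilon^{-1/2}$ are bounded by $C\epsilon^{-1/2}\alpha_F$ in each of the two regimes, which closes the argument.
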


\begin{remark}
The proof of Lemma \ref{ax_re_1} can be obtained by the Lemmas 3.1 and 3.2 in \cite{Verfurth98}.
For the particular case $c-\frac{1}{2} {\rm div}\, \bbeta=0$, the constant $\alpha_S $ is set to be $h_S{\epsilon^{-\frac{1}{2}}} $ for any $S = T\in \Ct_h$ or $F\in \Ce^0_h$ in \cite{Verfurth05}, and the associated energy error excludes the $L^2$-error $\|u-u_h\|_{0,\Ct_h}$. In this paper, a weighted function technique 
used in \cite{AyusoMarini:cdf} shall be employed, such that we can also obtain the estimates for the energy error including the $L^2$-error $\|u-u_h\|_{0,\Ct_h}$ even when $c-\frac{1}{2} {\rm div}\, \bbeta=0$. Hence, $\alpha_S$ is always set as (\ref{A_alpha}).
\end{remark}


For the approximation of function in $W_{h}$ and the Dirichlet boundary data $g$ by continuous finite element space, we 
need to introduce Oswald interpolation.
If $g$ is contained in $C(\partial \Omega)\cap M_{h}|_{\partial \Omega}$, then the continuous finite element space 
$W_{h,g}^c  := \{ w \in C(\Omega) \, : \, w |_{T} \in \Cp_p(T), T \in \Ct_h ,\, w|_{\partial \Omega} = g\} $ is not empty. Hence, we 
can introduce the Oswald interpolation $\Ci^{os}_h:W_h \rightarrow W^c_{h,g}$. Given a function $v_h \in W_h$, the operator $\Ci^{os}_h$ is prescribed at the 
Lagrangian nodes in the interior of $\Omega$ by the average of the values of $v_h$ at this node. For the nodes at the boundary $\partial \Omega$, $\Ci^{os}_h$ 
is prescribed at the Lagrangian nodes on $\partial \Omega$ by the value of $g$ at this node. The following estimate has been analyzed for nonconforming mesh 
and conforming mesh in \cite{OP03,OP07} and was extended to variable polynomial degree in \cite{ZGHS11}.
 
\begin{lemma}\label{os_est_lemma}
{\rm (cf. \cite{OP03,OP07,ZGHS11,Cockburn13})} 
If the Dirichlet data $g$ is contained in $C(\partial \Omega)\cap M_{h}|_{\partial \Omega}$, then 
for any $v_h \in W_h$ and any multi-index $\alpha$ with $|\alpha|=0,1$, the following estimate holds: 
\begin{align}
& \sum_{T \in \Ct_h} \| D^\alpha ( v_h -\Ci^{os}_h  v_h )\|^2_{0,T}\\
\nonumber
\leq & C_3 \left(\sum_{F \in \Ce^0_h} h^{1-2|\alpha|}_F \| \llbracket v_h\rrbracket \|^2_{0,F} 
+\sum_{F \in \Ce^{\partial }_h} h^{1-2|\alpha|}_F \|  g-v_h \|^2_{0,F}\right).
\end{align}
\end{lemma}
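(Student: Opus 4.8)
## Proof proposal for Lemma \ref{os_est_lemma}

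The plan is to reduce everything to a single node-to-face comparison on the reference configuration and then sum over elements. First I would fix the Lagrangian (nodal) basis $\{\varphi_a\}$ of $W_h^c := \{w \in C(\Omega) : w|_T \in \mathcal P_p(T)\}$ together with the full (discontinuous) nodal basis of $W_h$, and write out $v_h - \mathcal I_h^{os} v_h$ on a fixed element $T$ in terms of the nodal values. For an interior Lagrangian node $a$ shared by elements $T_1,\dots,T_m$, the defect of $\mathcal I_h^{os}$ at $a$ is $v_h|_T(a) - \frac1m\sum_{j} v_h|_{T_j}(a)$, which is a linear combination (with bounded coefficients depending only on $m$, hence on shape-regularity) of the jumps $\llbracket v_h\rrbracket(a)$ across the faces in the patch $\Omega_a$. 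For a boundary node $a$, the defect is $v_h|_T(a) - g(a)$, where we use $g \in C(\partial\Omega)\cap M_h|_{\partial\Omega}$ so that $g(a)$ is well defined and $W_{h,g}^c$ is nonempty. Thus on each $T$,
\begin{align*}
\| D^\alpha(v_h - \mathcal I_h^{os} v_h)\|_{0,T}^2 \le C \sum_{a \in \mathcal N(T)} \big| (v_h - \mathcal I_h^{os} v_h)(a)\big|^2 \, \| D^\alpha \varphi_a\|_{0,T}^2,
\end{align*}
by equivalence of norms on the finite-dimensional space $\mathcal P_p(T)$ after scaling to the reference element $\widehat T$.

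Next I would insert the scaling estimates. By the standard affine map $\widehat T \to T$, for $|\alpha|\in\{0,1\}$ one has $\|D^\alpha\varphi_a\|_{0,T}^2 \le C\, h_T^{d-2|\alpha|}$ with $C$ depending only on $p$ and the shape-regularity constant. So the right-hand side becomes a sum of terms $h_T^{d-2|\alpha|}\,|(v_h-\mathcal I_h^{os}v_h)(a)|^2$. It remains to bound each pointwise squared nodal defect by a face $L^2$-norm of the relevant jump (or of $g - v_h$ at the boundary). For a node $a$ lying on a face $F$, pick any element $T'$ with $a\in\overline{T'}$ and $F\subset\partial T'$, and a polynomial $\phi \in \mathcal P_p(F)$ with $\phi(a)=1$: by norm equivalence on $\mathcal P_p(\widehat F)$ and scaling, $|(\llbracket v_h\rrbracket)(a)|^2 \le C\, h_F^{-(d-1)} \|\llbracket v_h\rrbracket\|_{0,F}^2$ (and the analogue with $g-v_h$ on a boundary face). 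Combining, each term is controlled by $C\, h_T^{d-2|\alpha|} h_F^{-(d-1)}\|\llbracket v_h\rrbracket\|_{0,F}^2$, and since $h_T \simeq h_F$ for $F\subset\partial T$ under shape-regularity, this is $\le C\, h_F^{1-2|\alpha|}\|\llbracket v_h\rrbracket\|_{0,F}^2$, exactly the form appearing in the statement.

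Finally I would sum over all $T\in\mathcal T_h$. Each interior face $F$ is touched by the bounded number of elements in its patch $\Omega_F$, and each boundary face by one element, so the double sum collapses (with a shape-regularity-dependent multiplicity constant) into $C_3\big(\sum_{F\in\mathcal E_h^0} h_F^{1-2|\alpha|}\|\llbracket v_h\rrbracket\|_{0,F}^2 + \sum_{F\in\mathcal E_h^\partial} h_F^{1-2|\alpha|}\|g-v_h\|_{0,F}^2\big)$, which is the claim; I would then just cite \cite{OP03,OP07,ZGHS11} for the version with possibly variable polynomial degree. The only genuinely delicate point — and the main obstacle — is the bookkeeping of which faces contribute to a given node and verifying that the coefficients in writing each nodal defect as a combination of face jumps stay bounded purely in terms of shape-regularity (i.e. that the valence of nodes and the number of elements per patch are uniformly bounded); everything else is routine scaling plus finite-dimensional norm equivalence. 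Since this lemma is quoted verbatim from the literature, I would present only this sketch and defer the details to the cited references.
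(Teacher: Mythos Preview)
Your sketch is correct and is exactly the standard Karakashian--Pascal argument from \cite{OP03,OP07}; the paper itself gives no proof of this lemma and simply cites those references, so there is nothing to compare beyond noting that your outline matches the cited literature. Your closing remark that you would ``present only this sketch and defer the details to the cited references'' is precisely what the paper does.
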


In order to prove the local efficiency of the estimators, proper element and face bubble functions are useful. We set the element bubble function in the element $T$ as $B_T = \prod_{i=1}^{d+1} \lambda_i$, where $\lambda_i$ denotes the linear nodal basis function at $i$th vertex in $T$. Besides the element bubble function, as mention in Lemma 3.3 in \cite{Verfurth98} and Lemma 3.6 in \cite{Verfurth05}, there also exists proper face bubble function $B_F$ with $F \in \Ce^0_h$ such that the following lemma holds.
\begin{lemma}\label{bubble_lemma}
{\rm (cf. \cite{Verfurth98,Verfurth05})} For any element $T \in \Ct_h$, a polynomial $\phi \in \Cp_p(T)$ and any face $F\in \Ce^0_h$, a polynomial $\psi \in \Cp_p(F)$, the following estimates hold:
\begin{align*}
\| \phi \|^2_{0,T} &\leq C_4 (\phi,B_T\phi)_{T},\\
\|B_T\phi\|_{0,T} &\leq C_5 \|\phi\|_{0,T} ,\\
\| \psi \|^2_{0,F} &\leq C_6 \langle \psi,B_F\psi\rangle_{F},\\
\|B_F \psi \|_{0,\omega_F} &\leq C_7 \epsilon^{\frac{1}{4}} \alpha^{\frac{1}{2}}_F \| \psi \|_{0,F},\\
\|B_F \psi \|_{0,\omega_F} + \epsilon^{\frac{1}{2}} \| \nabla B_F \psi \|_{0,\omega_F} &\leq C_8\epsilon^{\frac{1}{4}} \alpha^{-\frac{1}{2}}_F \| \psi \|_{0,F}.
\end{align*}
\end{lemma}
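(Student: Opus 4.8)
The five bounds fall into two groups: the first three are $\epsilon$-independent and follow from norm equivalence on a reference element or face together with affine scaling, while the last two are the only place where $\epsilon$ enters and require an anisotropic face bubble. For the first estimate I would fix the reference simplex $\hat T$ and its bubble $\hat B=\prod_{i=1}^{d+1}\hat\lambda_i$, which is strictly positive in the interior; the form $(\hat\phi,\hat\chi)\mapsto(\hat\phi,\hat B\hat\chi)_{\hat T}$ is then symmetric positive definite on the finite-dimensional space $\Cp_p(\hat T)$ (a polynomial with $(\hat\phi,\hat B\hat\phi)_{\hat T}=0$ vanishes a.e.\ on the interior of $\hat T$, hence vanishes identically), so $\|\hat\phi\|^2_{0,\hat T}\le C(\hat\phi,\hat B\hat\phi)_{\hat T}$ by equivalence of norms in finite dimension. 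Pulling back along the affine map $T\to\hat T$, both sides carry the same Jacobian factor, so $C_4$ depends only on $p$ and the shape regularity. The second bound is immediate since $0\le B_T\le(d+1)^{-(d+1)}\le1$ pointwise (by the arithmetic--geometric mean inequality applied to $\sum_i\lambda_i=1$), giving $C_5=1$. The third bound $\|\psi\|^2_{0,F}\le C_6\langle\psi,B_F\psi\rangle_F$ is the same reference argument carried out on the reference face $\hat F$ with $\hat B_{\hat F}=\prod_{i=1}^{d}\hat\lambda_i$; since only $B_F|_F$ enters, this step is insensitive to how $B_F$ is extended off $F$.

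For the last two estimates the plan is to use Verf\"urth's squeezed face bubble \cite{Verfurth05}. Writing $\omega_F=T^+\cup T^-$, set the transverse width $w_F:=\epsilon^{\frac12}\alpha_F=\min\{h_F,\epsilon^{\frac12}\}$ and the contraction factor $\delta_F:=w_F/h_F=\min\{1,\epsilon^{\frac12}/h_F\}\in(0,1]$. Starting from the standard face bubble on $\omega_F$, I would contract its support toward $F$ by the factor $\delta_F$ in the direction normal to $F$, producing a bubble $B_F$ with $0\le B_F\le1$, vanishing on $\partial\omega_F$, and supported in a slab of thickness $\sim w_F$ on each side of $F$; the polynomial $\psi$ is extended into $\omega_F$ as a constant along the normal direction. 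Since the support then has measure $\sim w_F|F|$ and $\psi$ is transverse-constant,
\begin{equation}
\|B_F\psi\|^2_{0,\omega_F}\ \sim\ w_F\|\psi\|^2_{0,F}\ =\ \epsilon^{\frac12}\alpha_F\|\psi\|^2_{0,F},
\end{equation}
which is the fourth estimate. For the gradient, the tangential derivative of $B_F\psi$ is of order $h_F^{-1}$ while the normal derivative is of order $w_F^{-1}$; as $w_F\le h_F$ the normal part dominates, so $\|\nabla(B_F\psi)\|^2_{0,\omega_F}\sim w_F\cdot w_F^{-2}\|\psi\|^2_{0,F}=w_F^{-1}\|\psi\|^2_{0,F}$ and hence $\epsilon^{\frac12}\|\nabla(B_F\psi)\|_{0,\omega_F}\sim\epsilon^{\frac12}w_F^{-\frac12}\|\psi\|_{0,F}=\epsilon^{\frac14}\alpha_F^{-\frac12}\|\psi\|_{0,F}$. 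Combining this with $\|B_F\psi\|_{0,\omega_F}\sim\epsilon^{\frac14}\alpha_F^{\frac12}\|\psi\|_{0,F}\le\epsilon^{\frac14}\alpha_F^{-\frac12}\|\psi\|_{0,F}$ (using $\alpha_F\le1$) gives the fifth estimate.

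The main obstacle is the construction and bookkeeping of the squeezed bubble $B_F$, not the algebra. One must check that contracting by $\delta_F$ really yields the transverse width $w_F=\epsilon^{\frac12}\alpha_F$ in both regimes $h_F\le\epsilon^{\frac12}$ and $h_F>\epsilon^{\frac12}$, that $B_F$ stays admissible (bounded by one, vanishing on $\partial\omega_F$, and with an unchanged $L^2(F)$ normalization so the third estimate still holds), and above all that the normal derivative genuinely scales like $w_F^{-1}$ rather than $h_F^{-1}$. This anisotropic squeezing is precisely what replaces the naive, $\epsilon$-blind $h_F$-scaling by the robust $\epsilon^{\frac12}\alpha_F$-scaling, and obtaining its two-sided bounds is the only delicate point; the element estimates and the face $L^2$-equivalence are routine.
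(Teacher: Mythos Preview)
Your sketch is correct and is precisely the Verf\"urth squeezed-bubble construction from \cite{Verfurth98,Verfurth05}; the paper itself does not give a proof of this lemma but simply cites those references (``as mentioned in Lemma~3.3 in \cite{Verfurth98} and Lemma~3.6 in \cite{Verfurth05}''), so there is nothing further to compare. Your identification of the transverse width $w_F=\epsilon^{1/2}\alpha_F=\min\{h_F,\epsilon^{1/2}\}$ and the resulting scalings is exactly what those references carry out.
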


\section{Proof of reliability}
In this section, we give the proof of Theorem~\ref{thm_reliability}, which shows reliability of the a posteriori error estimator in Definition~\ref{def_estimator}. 


In view of the assumption (A4), we define a weighted function 
\begin{align}
\varphi := e^{-\psi} +\chi,  \label{varphi}
\end{align}
where $\chi$ is a positive constant to be determined later. Let $\Be_\Bq = \Bq-\Bq_h,\Be_u = u-u_h$. 
We have the following Lemma~\ref{pre-lemma}.
\begin{lemma}\label{pre-lemma}
Let $\varphi$ be given in (\ref{varphi}) with $\chi \geq 2b_0\| e^{-\psi} \|_{L^\infty(\Omega)} \| \nabla \psi\|^2_{L^\infty(\Omega)}$. 
Then the following estimate holds:
\begin{align}
\label{pre-est}
& C \left(\epsilon^{-1}\Vert \Be_\Bq \Vert_{\mathcal{T}_{h}}^{2} + \Vert \Be_u\Vert_{\mathcal{T}_{h}}^{2}\right)\\
\nonumber 
\leq  & \epsilon^{-1}( \Be_\Bq, \varphi \Be_\Bq )_{\Ct_h} -( \Be_u, \nabla \varphi \cdot \Be_{\Bq} )_{\Ct_h} \\
\nonumber
& \quad - \frac{1}{2} ( \bbeta \cdot \nabla \varphi \Be_u,\Be_u )_{\Ct_h}+ \big( (c-\frac{1}{2}{\rm div}\bbeta) \Be_u ,\varphi\Be_u \big)_{\Ct_h}. 
\end{align}
\end{lemma}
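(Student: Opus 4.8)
The plan is to establish the coercivity-type estimate \eqref{pre-est} by exploiting the algebraic structure of the weighted function $\varphi = e^{-\psi} + \chi$ together with assumptions (A3) and (A4). First I would observe that $\varphi$ is bounded above and below by positive constants (since $\psi \in W^{1,\infty}(\Omega)$ and $\chi > 0$), so that $\epsilon^{-1}(\Be_\Bq, \varphi\Be_\Bq)_{\Ct_h} \geq c\,\epsilon^{-1}\|\Be_\Bq\|^2_{\Ct_h}$ trivially, and the whole task reduces to controlling $\|\Be_u\|^2_{\Ct_h}$ from below by the right-hand side. The key identity is $\nabla\varphi = -e^{-\psi}\nabla\psi$, hence $\bbeta\cdot\nabla\varphi = -e^{-\psi}(\bbeta\cdot\nabla\psi) \leq -b_0 e^{-\psi} \leq -b_0 \inf_\Omega e^{-\psi} < 0$ by (A4). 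Therefore $-\frac12(\bbeta\cdot\nabla\varphi\,\Be_u,\Be_u)_{\Ct_h} \geq \frac{b_0}{2}(\inf e^{-\psi})\|\Be_u\|^2_{\Ct_h}$, which is precisely the positive zeroth-order term we need for $\Be_u$.

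Next I would handle the indefinite cross term $-(\Be_u,\nabla\varphi\cdot\Be_\Bq)_{\Ct_h}$ by a weighted Young's inequality: bound it by $\frac12\delta\|\Be_u\|^2_{\Ct_h} + \frac{1}{2\delta}\|\nabla\varphi\|^2_{L^\infty}\|\Be_\Bq\|^2_{\Ct_h}$ for a parameter $\delta > 0$ to be chosen. The $\|\Be_u\|^2$ piece is absorbed into the positive term from the previous paragraph provided $\delta$ is small relative to $b_0(\inf e^{-\psi})$; the $\|\Be_\Bq\|^2$ piece is controlled by $\epsilon^{-1}(\Be_\Bq,\varphi\Be_\Bq)_{\Ct_h}$ after possibly enlarging $\chi$ — here is where the hypothesis $\chi \geq 2b_0\|e^{-\psi}\|_{L^\infty}\|\nabla\psi\|^2_{L^\infty}$ enters, ensuring the lower bound on $\varphi$ dominates the constant $\|\nabla\varphi\|^2_{L^\infty}/(2\delta)$ once $\delta$ is fixed by the $\Be_u$-balancing (note $\|\nabla\varphi\|_{L^\infty} = \|e^{-\psi}\nabla\psi\|_{L^\infty} \leq \|e^{-\psi}\|_{L^\infty}\|\nabla\psi\|_{L^\infty}$, so the stated threshold on $\chi$ is exactly the right order; one also uses $\epsilon^{-1}\geq 1$ from (A1) so the $\epsilon$-weighted term only helps). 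Finally, the last term $\big((c-\frac12{\rm div}\,\bbeta)\Be_u,\varphi\Be_u\big)_{\Ct_h}$ is nonnegative by (A3) combined with $\varphi > 0$, so it may simply be dropped from the lower bound (it sits on the right-hand side with a favorable sign and does no harm). Collecting the surviving positive contributions to $\|\Be_\Bq\|^2_{\Ct_h}$ and $\|\Be_u\|^2_{\Ct_h}$ yields \eqref{pre-est} with a suitable constant $C$ depending only on $b_0$, $\|\psi\|_{W^{1,\infty}}$, and $\|\bbeta\|_{L^\infty}$.

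The main obstacle, or rather the point requiring care, is the interplay of the three scalings: the $\delta$ in Young's inequality must be small enough to absorb the cross-term contribution into the $\frac{b_0}{2}(\inf e^{-\psi})$ coming from the convective term, yet once $\delta$ is chosen the resulting coefficient $\|\nabla\varphi\|^2_{L^\infty}/(2\delta)$ on $\|\Be_\Bq\|^2_{\Ct_h}$ must still be dominated by $\epsilon^{-1}\inf\varphi \geq \inf\varphi \geq \chi$; verifying that the explicit threshold $\chi \geq 2b_0\|e^{-\psi}\|_{L^\infty}\|\nabla\psi\|^2_{L^\infty}$ suffices (and tracking that all constants are $\epsilon$-independent, using $0 < \epsilon \leq 1$) is the delicate bookkeeping. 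Everything else is elementary: integration by parts is not even needed here since \eqref{pre-est} is a pointwise/algebraic lower bound on the quadratic form, not yet connected to the residual — that connection is made in the subsequent reliability argument.
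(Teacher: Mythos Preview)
Your proposal is correct and follows essentially the same route as the paper: both arguments use $\nabla\varphi=-e^{-\psi}\nabla\psi$ together with (A4) to extract a positive $\|\Be_u\|^2$ contribution from $-\tfrac12(\bbeta\cdot\nabla\varphi\,\Be_u,\Be_u)$, drop the $(c-\tfrac12\mathrm{div}\,\bbeta)$ term via (A3), and then control the cross term $-(\Be_u,\nabla\varphi\cdot\Be_\Bq)$ by Young's inequality, absorbing the resulting $\|\Be_\Bq\|^2$ piece into $\epsilon^{-1}(\Be_\Bq,\varphi\Be_\Bq)\geq\epsilon^{-1}\chi\|\Be_\Bq\|^2$ thanks to the lower bound on $\chi$ and $\epsilon\leq 1$. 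The only cosmetic difference is that the paper keeps the weighted form $(e^{-\psi}\Be_u,\Be_u)$ through the Young splitting (writing $e^{-\psi}=e^{-\psi/2}e^{-\psi/2}$) and takes $\delta=b_0/2$, whereas you pass to $\inf_\Omega e^{-\psi}$ earlier; this does not affect the argument.
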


\begin{proof}
According to the assumptions (A3)-(A4), we have
\begin{align*}
& \epsilon^{-1}( \Be_\Bq, \varphi \Be_\Bq )_{\Ct_h} +( \Be_u,e^{-\psi} \nabla \psi \cdot \Be_{\Bq} )_{\Ct_h} \nn\\
 & \quad + \frac{1}{2} ( \bbeta \cdot \nabla \psi e^{-\psi} \Be_u,\Be_u )_{\Ct_h}+ \big( (c-\frac{1}{2}{\rm div}\bbeta) \Be_u ,\varphi\Be_u \big)_{\Ct_h} \\
\geq & \epsilon^{-1}\chi( \Be_\Bq, \Be_\Bq )_{\Ct_h} +( \Be_u,e^{-\psi} \nabla \psi \cdot \Be_{\Bq} )_{\Ct_h}+ \frac{b_0}{2} ( e^{-\psi} \Be_u,\Be_u )_{\Ct_h}.
\end {align*}
By the Cauchy-Schwarz and Young's inequalities, for any $\delta>0$, we have
\begin{align*}
&( \Be_u,e^{-\psi} \nabla \psi \cdot \Be_{\Bq} )_{\Ct_h} \\
\leq & \frac{1}{2}\left(  \delta^{-1} \| \nabla \psi \|^2_{L^\infty(\Omega)} \| e^{-\psi} \|_{L^\infty(\Omega)} (\Be_{\Bq},\Be_{\Bq})_{\Ct_h}
 + \delta\| e^{-\psi} \|_{L^\infty(\Omega)}(\Be_u,\Be_u)_{\Ct_h} \right).
\end{align*}
Then,  by taking $\chi \geq 2b_0\| e^{-\psi} \|_{L^\infty(\Omega)} \| \nabla \psi\|^2_{L^\infty(\Omega)} $ and $\delta = \frac{b_0}{2}$, 
we can conclude that the proof is complete. 
\end{proof}

We are now ready to state a key result of the upper bound estimate of 
$\left(\epsilon^{-1}\Vert \Be_\Bq \Vert_{\mathcal{T}_{h}}^{2} + \Vert \Be_u\Vert_{\mathcal{T}_{h}}^{2}\right)$.
\begin{lemma}\label{pre-reliability}
If the Dirichlet data $g$ is contained in $C(\partial \Omega)\cap M_{h}|_{\partial \Omega}$, then 
\begin{align}
& \left(\epsilon^{-1}\Vert \Be_\Bq \Vert_{\mathcal{T}_{h}}^{2} + \Vert \Be_u\Vert_{\mathcal{T}_{h}}^{2}\right)
\leq C \Big( \sum_{T\in \Ct_h}\eta^2_T + \sum_{F\in \Ce^0_h}(\eta^0_F)^{2} + \sum_{F\in \Ce^\partial_h} (\eta^\partial_F)^2  \Big).
\end{align}
\end{lemma}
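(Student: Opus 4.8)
The plan is to start from Lemma~\ref{pre-lemma}, which reduces the task to bounding the four terms
\begin{align*}
\mathrm{I} &= \epsilon^{-1}( \Be_\Bq, \varphi \Be_\Bq )_{\Ct_h}, \quad
\mathrm{II} = -( \Be_u, \nabla \varphi \cdot \Be_{\Bq} )_{\Ct_h}, \\
\mathrm{III} &= -\tfrac{1}{2}( \bbeta \cdot \nabla \varphi\, \Be_u,\Be_u )_{\Ct_h}, \quad
\mathrm{IV} = \big( (c-\tfrac{1}{2}{\rm div}\bbeta) \Be_u ,\varphi\Be_u \big)_{\Ct_h}.
\end{align*}
Since $\varphi = e^{-\psi}+\chi$ is bounded above and below by positive constants depending only on $\psi$ and $b_0$, and $|\nabla\varphi| = |\nabla\psi\, e^{-\psi}|$ is likewise bounded, terms $\mathrm{III}$ and $\mathrm{IV}$ are absorbed into the left-hand side of the eventual estimate once we have control of $\mathrm{I}$ and $\mathrm{II}$, or else they combine with the exact-equation structure. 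The essential work is therefore to express the sum $\mathrm{I}+\mathrm{II}$ (more precisely, the full right-hand side of \eqref{pre-est}) in terms of the \emph{computable} HDG residuals, using the equations \eqref{HDG1} satisfied by the exact solution and the discrete equations \eqref{PD}.

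The key step is a \emph{residual identity}: I would test the first equation of \eqref{HDG1} against $\varphi\Be_\Bq$ and the second equation against $\varphi\Be_u$, integrate by parts elementwise, and subtract the corresponding discrete relations. This produces a representation in which the bulk terms become $(\alpha_T R_h, \cdot)$-type contributions, the interior-face terms become jumps $\llbracket \Bq_h\cdot\Bn\rrbracket$ and $\llbracket u_h\rrbracket$ (the exact $\Bq$ and $u$ being single-valued and having continuous normal flux), and the boundary terms involve $u_h - g$. To handle the $H^1$-conforming part of the error I would split $\Be_u = (u - \Ci^{os}_h u_h) + (\Ci^{os}_h u_h - u_h)$, applying the Cl\'ement interpolation estimates of Lemma~\ref{ax_re_1} to the conforming piece $u - \Ci^{os}_h u_h \in H^1_0(\Omega)$ against the elementwise residual and the flux-jump terms, and controlling the nonconforming remainder $\Ci^{os}_h u_h - u_h$ directly by the Oswald estimate of Lemma~\ref{os_est_lemma}, which yields exactly the jump contributions $\sum_F h_F^{\pm1}\|\llbracket u_h\rrbracket\|^2_{0,F}$ and $\sum_F h_F^{\pm1}\|g - u_h\|^2_{0,F}$. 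A parallel but simpler treatment handles the weighted flux error $\varphi\Be_\Bq$, where the term $\epsilon^{-1}\|\Bq_h + \epsilon\nabla u_h\|_{0,T}$ appears from the first equation of \eqref{HDG1} (note $\epsilon^{-1}\Bq + \nabla u = 0$ exactly), and the term $-( \Be_u,\nabla\varphi\cdot\Be_\Bq)$ is split by Cauchy--Schwarz into an $\epsilon^{-1/2}\|\Be_\Bq\|$ piece (absorbed) and an $\epsilon^{1/2}$-weighted piece that must be reconciled with the fact that $\epsilon\|\nabla\Be_u\|$ is \emph{not} yet on the left-hand side here --- so this must instead be routed through the residual identity rather than a naive bound.

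After assembling all contributions, every face and element term on the right is multiplied by a weight; the crux is checking that these weights collapse to $\alpha_T$, $\epsilon^{-1/2}\alpha_F$, and $\gamma_F$ respectively, i.e. that the messy min-expression defining $\gamma_F$ in \eqref{A_gamma} is exactly what is needed to dominate simultaneously the Oswald jump weights ($h_F^{-1}$ and $h_F$), the Cl\'ement face weight ($\epsilon^{-1/2}\alpha_F$ combined with $\|\bbeta\|_{L^\infty}$ from the convective residual), and the weight coming from the convective boundary term $(\bbeta\cdot\Bn)\widehat u_h$. I expect the main obstacle to be precisely this bookkeeping: tracking how the weights from the three different interpolation/bubble lemmas interact with the convective terms and verifying, in both asymptotic regimes $h_F\lesssim\epsilon$ and $h_F\gtrsim\epsilon$, that $\gamma_F$ as defined is an upper bound for all of them --- this is where the robustness (the $\epsilon$-uniformity) genuinely gets used and cannot be glossed over. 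A secondary technical point is the careful choice of $\delta$ and $\chi$ in several Young's-inequality splittings so that the $\epsilon^{-1}\|\Be_\Bq\|^2$ and $\|\Be_u\|^2$ terms produced along the way are small enough to be absorbed into the left side of Lemma~\ref{pre-lemma}; the constant $C$ degrades with $b_0$ and $\|\psi\|_{W^{1,\infty}}$ but stays independent of $\epsilon$ and $h$, as required.
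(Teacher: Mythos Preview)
Your overall strategy is correct and follows the paper's closely: start from Lemma~\ref{pre-lemma}, use the exact mixed equations and the HDG identity \eqref{est_3}, insert the Oswald interpolant $u_h^{\Ci}=\Ci^{os}_h u_h$ so that $u-u_h^{\Ci}\in H^1_0(\Omega)$, apply the Cl\'ement estimate (Lemma~\ref{ax_re_1}) to the conforming part, and control the nonconforming remainder $u_h-u_h^{\Ci}$ by Lemma~\ref{os_est_lemma}. Two points, however, deserve correction.

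First, your remark that terms $\mathrm{III}$ and $\mathrm{IV}$ ``are absorbed into the left-hand side'' is not viable: both are of size $C(\bbeta,c,\psi)\|\Be_u\|_{0,\Ct_h}^2$ with constants that may dominate the $C$ in Lemma~\ref{pre-lemma}, so direct absorption fails. The paper keeps these terms intact and, after inserting $u_h^{\Ci}$, groups them with the analogous convection--reaction contributions arising from the expansion of $\epsilon^{-1}(\Be_\Bq,\varphi\Be_\Bq)_{\Ct_h}$; the resulting block (called $I_4$) is then handled via the integration-by-parts identity $-(\bbeta\cdot\nabla v,\varphi v)=\tfrac12(\bbeta\cdot\nabla\varphi,v^2)+\tfrac12({\rm div}\bbeta,\varphi v^2)$ applied to $v=u-u_h^{\Ci}\in H^1_0(\Omega)$, which converts everything into $\|u_h-u_h^{\Ci}\|_{0,\Ct_h}^2$ plus a $\delta\|\Be_u\|_{0,\Ct_h}^2$ term that \emph{can} be absorbed. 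Your hedge ``or else they combine with the exact-equation structure'' points in the right direction but misses this mechanism.

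Second, and more importantly, you describe the $\gamma_F$ issue as ``verifying, in both asymptotic regimes, that $\gamma_F$ is an upper bound'' for the weights produced by a single derivation. That is not how the $\min$ in \eqref{A_gamma} arises. The paper carries out \emph{two separate} estimates of the convective face contributions (labelled Approach~A and Approach~B): Approach~A routes $\langle\bbeta\cdot\Bn\, u_h,\varphi(u-u_h^{\Ci})\rangle_{\partial\Ct_h}$ through an integration by parts and bounds $\nabla(u-u_h^{\Ci})$ via $\epsilon^{-1}(\Be_\Bq+\Bq_h+\epsilon\nabla u_h)$, producing the weight $\gamma_{F,1}\sim\frac{\epsilon}{h_F}+(\frac{h_F}{\epsilon}+\epsilon^{-1/2}\alpha_F)\|\bbeta\|_\infty+h_F$; Approach~B instead inserts the Cl\'ement interpolant and uses an inverse-trace inequality together with $L^2$-stability of $\pi_h$, producing $\gamma_{F,2}\sim\frac{\epsilon+\|\bbeta\|_\infty}{h_F}+h_F$. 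Both inequalities \eqref{est-f1} and \eqref{est-f2} hold for all $h_F,\epsilon$; taking their minimum yields $\gamma_F$. A single derivation will not give you the $\min$, and this is precisely where the $\epsilon$-robustness lives.
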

\begin{proof}
According to Lemma \ref{pre-lemma}, we have
\begin{align}
\label{est_1}
& C \left(\epsilon^{-1}\Vert \Be_\Bq \Vert_{\mathcal{T}_{h}}^{2} + \Vert \Be_u\Vert_{\mathcal{T}_{h}}^{2}\right)\\
\nonumber 
\leq  & \epsilon^{-1}( \Be_\Bq, \varphi \Be_\Bq )_{\Ct_h} -( \Be_u, \nabla \varphi \cdot \Be_{\Bq} )_{\Ct_h} \\
\nonumber
& \quad - \frac{1}{2} ( \bbeta \cdot \nabla \varphi \Be_u,\Be_u )_{\Ct_h}+ \big( (c-\frac{1}{2}{\rm div}\bbeta) \Be_u ,\varphi\Be_u \big)_{\Ct_h}. 
\end{align}
Let $u^{\Ci}_h:=\Ci^{os}_hu_h$. By the definition of $\Ci^{os}_h$, clearly, we have $u^{\Ci}_h|_{\partial \Omega} = g$. 
Adding and subtracting $\epsilon\nabla u^{\Ci}_h$ into $\Bq-\Bq_h$, we have
\begin{align}
\label{est_2}
& \epsilon^{-1}( \Be_\Bq, \varphi \Be_\Bq )_{\Ct_h} \\
\nn
= & -\left( \Bq-\Bq_h,\varphi( \nabla u - \nabla u^{\Ci}_h) \right)_{\Ct_h} - \epsilon^{-1}\left( \Bq-\Bq_h,\varphi\,( \epsilon \nabla u^{\Ci}_h+\Bq_h)  \right)_{\Ct_h}\nn\\
= & \left( \nabla \varphi \cdot(\Bq-\Bq_h),u-u^{\Ci}_h\right)_{\Ct_h} + \left( \varphi\, {\rm div}(\Bq-\Bq_h),u-u^{\Ci}_h  \right)_{\Ct_h} \nn\\
&\qquad - \langle (\Bq-\Bq_h)\cdot \Bn, \varphi(u-u^{\Ci}_h)\rangle_{\partial \Ct_h}-\epsilon^{-1}\left( \Bq-\Bq_h,\varphi\,( \epsilon \nabla u^{\Ci}_h+\Bq_h)  \right)_{\Ct_h}\nn\\
= & \left( \nabla \varphi \cdot(\Bq-\Bq_h),u-u^{\Ci}_h\right)_{\Ct_h} + \left( R_h,\varphi (u-u^{\Ci}_h )\right)_{\Ct_h} \nn\\
&\qquad +\langle \Bq_h \cdot \Bn, \varphi( u-u^{\Ci}_h )\rangle_{\partial \Ct_h}- \left( \bbeta \cdot \nabla(u-u_h) + c(u-u_h), \varphi( u-u^{\Ci}_h )  \right)_{\Ct_h}\nn\\
\nonumber
& \qquad 
 - \epsilon^{-1}\left( \Bq-\Bq_h,\varphi\,( \epsilon \nabla u^{\Ci}_h+\Bq_h)  \right)_{\Ct_h}.
\end{align}
In the last step of (\ref{est_2}), we have used the equation (\ref{mixed-eqs}) and the fact that 
$\langle \Bq \cdot \Bn, \varphi( u-u^{\Ci}_h )\rangle_{\partial \Ct_h}=0$ 
(since jumps of $\Bq\cdot\Bn, u, u^{\Ci}_h$ vanish on all interior faces and $ u-u^{\Ci}_h = 0$ on $\partial \Omega$). 
Inserting (\ref{est_2}) into (\ref{est_1}) and subtracting and adding $u_h$ into $u-u^{\Ci}_h$ 
in the first term of the right-hand side of (\ref{est_2}), we have
\begin{align}
\label{est_21}
& C\left(\epsilon^{-1}\Vert \Be_\Bq \Vert_{\mathcal{T}_{h}}^{2} + \Vert \Be_u\Vert_{\mathcal{T}_{h}}^{2}\right)\\
\nonumber
\leq & \left( \nabla \varphi \cdot(\Bq-\Bq_h),u_h-u^{\Ci}_h\right)_{\Ct_h} + \left( R_h,\varphi (u-u^{\Ci}_h )\right)_{\Ct_h} \\
\nn
&\quad +\langle \Bq_h \cdot \Bn, \varphi( u-u^{\Ci}_h )\rangle_{\partial \Ct_h}
- \left( \bbeta \cdot \nabla(u-u_h) + c(u-u_h), \varphi( u-u^{\Ci}_h )  \right)_{\Ct_h}\\
\nn
&\quad  -\epsilon^{-1}\left( \Bq-\Bq_h,\varphi\,( \epsilon \nabla u^{\Ci}_h+\Bq_h)  \right)_{\Ct_h}\\
\nn
&\quad  -\frac{1}{2} ( \bbeta \cdot \nabla \varphi \Be_u,\Be_u )_{\Ct_h}+ \big( (c-\frac{1}{2}{\rm div}\bbeta) \Be_u ,\varphi\Be_u \big)_{\Ct_h} .
\end{align}
For any $w\in  W^c_{1,h} \cap H^1_0(\Omega) $, the equation (\ref{P2}) in the HDG method (\ref{PD}) can be rewritten as follows after integration by parts:
\begin{align*}
(f,w)_{\Ct_h}& = -(\Bq_h,\nabla w)_{\Ct_h} + \left({\rm div}(\bbeta u_h),w\right)_{\Ct_h} - \langle \bbeta u_h \cdot \Bn,w\rangle_{\partial \Ct_h} \\
&\quad + (cu_h - {\rm div}\bbeta u_h,w)_{\Ct_h} + \langle ( \widehat{\Bq}_h+\widehat{\bbeta u}_h )\cdot\Bn,w\rangle_{\partial \Ct_h}\\
&=({\rm div}\Bq_h,w)_{\Ct_h} - \langle \Bq_h \cdot \Bn,w \rangle_{\partial \Ct_h}  + ( \bbeta \cdot \nabla u_h+cu_h,w )_{\Ct_h}\\
&\quad - \langle \bbeta  \cdot \Bn u_h,w\rangle_{\partial \Ct_h}+ \langle ( \widehat{\Bq}_h+\widehat{\bbeta u}_h )\cdot\Bn,w\rangle_{\partial \Ct_h}.
\end{align*}
Note that the equation (\ref{P4}) indicates $\langle ( \widehat{\Bq}_h+\widehat{\bbeta u}_h )\cdot\Bn,w\rangle_{\partial \Ct_h} = 0$. 
Hence, we have
\begin{align}
- \langle ({\Bq}_h+\bbeta {u}_h )\cdot\Bn,w\rangle_{\partial \Ct_h}  = (R_h,w)_{\Ct_h}. \label{est_3}
\end{align}

Combining (\ref{est_21}) and (\ref{est_3}), we obtain
\begin{align*}
& C \left(\epsilon^{-1}\Vert \Be_\Bq \Vert_{\mathcal{T}_{h}}^{2} 
+\Vert \Be_u\Vert_{\mathcal{T}_{h}}^{2}\right)  \leq \sum^4_{l=1}I_l \text{ where }\\
I_1& =  -\left(  \nabla \psi e^{-\psi}(\Bq-\Bq_h),u_h-u^{\Ci}_h\right)_{\Ct_h} -\epsilon^{-1}\left(  \Bq-\Bq_h,\varphi (\Bq_h+\epsilon \nabla u^{\Ci}_h)\right)_{\Ct_h},\\
I_2&= \left( R_h, (I-\pi_h)( \varphi u-\varphi u^{\Ci}_h )\right)_{\Ct_h} +\langle (\Bq_h+\bbeta u_h)\cdot\Bn, (I-\pi_h)( \varphi u-\varphi u^{\Ci}_h )\rangle_{\partial \Ct_h} ,\\
I_3 & = -\langle \bbeta \cdot \Bn u_h, \varphi( u-u^{\Ci}_h )\rangle_{\partial \Ct_h} -\left( \bbeta \cdot \nabla(u^{\Ci}_h-u_h) + c(u^{\Ci}_h-u_h), 
\varphi( u-u^{\Ci}_h )  \right)_{\Ct_h},\\
I_4 & = -\left( \bbeta \cdot \nabla(u-u^{\Ci}_h) + c(u-u^{\Ci}_h), \varphi( u-u^{\Ci}_h )  \right)_{\Ct_h} \\
&\qquad -  \frac{1}{2} ( \bbeta \cdot \nabla \varphi \Be_u,\Be_u )_{\Ct_h}+ \big( (c-\frac{1}{2}{\rm div}\bbeta) \Be_u ,\varphi\Be_u \big)_{\Ct_h}.
\end{align*}
Here $\pi_h$ is the Cl\'{e}ment-type interpolation into the space $W^c_{1,h}\cap H^1_0(\Omega)$ (see Definition \ref{define-clement}).

Now we estimate the summation $\sum^4_{l=1}I_l$. For the sake of simplicity, given any $v \in H^1(D)$, $D\subset \Omega$, we define an energy norm for $v$ by $\interleave v \interleave_D = \left(\|v\|^2_{0,D} + \epsilon \| \nabla v\|^2_{0,D}\right)^{\frac{1}{2}}$. We can refer to Appendix A and Appendix B for the estimates for $I_{1}$ and $I_{4}$ respectively. In the following we mainly focus on the estimate of $I_2+I_3$ by two approaches. 

(\textbf{Approach A})
For the first approach, we consider the estimate of $I_2$ and $I_3$ separately. By the approximation properties of the Cl\'{e}ment-type 
interpolation presented in Lemma \ref{ax_re_1}, we obtain
\begin{align*}
I_2 &\leq C\sum_{T \in \Ct_h} \alpha_T \| R_h\|_{0,T} \interleave \varphi u-\varphi u^{\Ci}_h\interleave_{\Omega_T}   \\
& \quad +C \sum_{F\in \Ce^0_h} \left(\epsilon^{-\frac{1}{4}} \alpha^{\frac{1}{2}}_F  \| \llbracket \Bq_h\cdot\Bn \rrbracket   \|_{0,F} + \epsilon^{-\frac{1}{4}} \alpha^{\frac{1}{2}}_F \|\bbeta\|_{L^\infty(F)} \| \llbracket u_h \rrbracket \|_{0,F}\right)  \interleave \varphi u-\varphi u^{\Ci}_h\interleave_{\Omega_F}  .
\end{align*}
Using the Young's inequality and subtracting and adding $u_h$ into $u-u^{\Ci}_h$, we further have
\begin{align}
\label{est_I2}
I_2 & \leq \frac{C}{2\delta} \sum_{T \in \Ct_h} \alpha^2_T \|R_h\|^2_{0,T}+  \frac{C}{2\delta} \sum_{F \in \Ce^0_h} \epsilon^{-\frac{1}{2}} \alpha_F  \| \llbracket \Bq_h\cdot\Bn \rrbracket \|^2_{0,F}\\
\nn
&\quad +\frac{C}{2\delta} \sum_{F \in \Ce^0_h} \epsilon^{-\frac{1}{2}} \alpha_F \|\bbeta\|_{L^\infty(F)} \| \llbracket  u_h \rrbracket   \|^2_{0,F}+ \delta C  \|\bbeta\|_{L^\infty(\Omega)} \interleave \varphi(u-u^{\Ci}_h)\interleave^2_{\Ct_h}. 
\end{align}
Since $u-u^{\Ci}_h=0$ on $\partial \Omega$, we easily get $\langle \bbeta \cdot \Bn u^{\Ci}_h, \varphi( u-u^{\Ci}_h )\rangle_{\partial \Ct_h}=0$. 
Via integrating by parts, we have
\begin{align*}
I_3 &= \left( \bbeta (u^{\Ci}_h-u_h), \varphi \nabla(u-u^{\Ci}_h) \right)_{\Ct_h} + \left( \bbeta \cdot \nabla \varphi (u^{\Ci}_h-u_h), u-u^{\Ci}_h \right)_{\Ct_h} \\
&\quad +\left( ({\rm div}\bbeta -c)(u^{\Ci}_h-u_h), \varphi(u-u^{\Ci}_h) \right)_{\Ct_h}.
\end{align*} 
Note that $\nabla(u-u^{\Ci}_h) = -\epsilon^{-1}\{ (\Bq-\Bq_h) +(\Bq_h+\epsilon \nabla u_h) -\epsilon \nabla(u_h - u^{\Ci}_h) \}$ and $u-u^{\Ci}_h = (u-u_h) + (u_h-u^{\Ci}_h)$, we utilize the Cauchy-Schwarz and Young's inequalities to obtain
\begin{align}
\label{est_I3}
I_3 &\leq \big(C^{d}_1 \frac{3\epsilon^{-1}}{2\delta}+ C^d_2(1+\frac{1}{2\delta})  \big) \| u_h - u^{\Ci}_h \|^2_{0,\Ct_h} +C^d_2 \frac{\delta}{2} \|u-u_h\|^2_{0,\Ct_h}\\
\nn
&\quad +\frac{\delta}{2} \left( \epsilon^{-1}\|\Bq-\Bq_h\|^2_{0,\Ct_h} + \epsilon^{-1}\| \Bq_h + \epsilon \nabla u_h \|^2_{0,\Ct_h} + \epsilon\|\nabla(u_h - u^{\Ci}_h)\|^2_{0,\Ct_h}  \right),
\end{align}
where $C^d_1 = \|\bbeta\varphi\|^2_{L^\infty(\Omega)}, C^d_2=\| \bbeta \cdot\nabla \varphi+\varphi({\rm div}\bbeta -c) \|_{L^\infty(\Omega)}$. 
So, by the first approach,
\begin{align}
\label{I2_I3_1}
& I_{2} + I_{3}\\
\nn
\leq & \frac{C}{2\delta} \sum_{T \in \Ct_h} \alpha^2_T \|R_h\|^2_{0,T}+  \frac{C}{2\delta} \sum_{F \in \Ce^0_h} \epsilon^{-\frac{1}{2}} \alpha_F  \| \llbracket \Bq_h\cdot\Bn \rrbracket \|^2_{0,F}\\
\nn
&\quad +\frac{C}{2\delta} \sum_{F \in \Ce^0_h} \epsilon^{-\frac{1}{2}} \alpha_F \|\bbeta\|_{L^\infty(F)} \| \llbracket  u_h \rrbracket   \|^2_{0,F}+ \delta C  \|\bbeta\|_{L^\infty(\Omega)} \interleave \varphi(u-u^{\Ci}_h)\interleave^2_{\Ct_h}\\
\nn
&\quad +\big(C^{d}_1 \frac{3\epsilon^{-1}}{2\delta}+ C^d_2(1+\frac{1}{2\delta})  \big) \| u_h - u^{\Ci}_h \|^2_{0,\Ct_h} 
+C^d_2 \frac{\delta}{2} \|u-u_h\|^2_{0,\Ct_h}\\
\nn
&\quad +\frac{\delta}{2} \left( \epsilon^{-1}\|\Bq-\Bq_h\|^2_{0,\Ct_h} + \epsilon^{-1}\| \Bq_h + \epsilon \nabla u_h \|^2_{0,\Ct_h} 
+ \epsilon\|\nabla(u_h - u^{\Ci}_h)\|^2_{0,\Ct_h}  \right).
\end{align}
Here,  we recall that $C^d_1 = \|\bbeta\varphi\|^2_{L^\infty(\Omega)}$ and $C^d_2=\| \bbeta 
\cdot\nabla \varphi+\varphi({\rm div}\bbeta -c) \|_{L^\infty(\Omega)}$.

(\textbf{Approach B})
For the second approach, we estimate the summation of $I_2$ and $I_3$. It is clear that
\begin{align}
\label{I2_I3_eq1}
& I_2+ I_3\\
\nn
= &\left( R_h, (I-\pi_h)( \varphi u-\varphi u^{\Ci}_h )\right)_{\Ct_h} +\langle \Bq_h\cdot\Bn, (I-\pi_h)( \varphi u-\varphi u^{\Ci}_h )\rangle_{\partial \Ct_h}\\
\nn
& \quad  -\langle \bbeta \cdot \Bn u_h,\pi_h( \varphi u- \varphi u^{\Ci}_h )\rangle_{\partial \Ct_h}\\
\nn
&\quad  -\left( \bbeta \cdot \nabla(u^{\Ci}_h-u_h) + c(u^{\Ci}_h-u_h), \varphi( u-u^{\Ci}_h )  \right)_{\Ct_h}.
\end{align}
For the first two terms of the right-hand side of (\ref{I2_I3_eq1}), the estimates can be similarly obtained as in (\ref{est_I2}). 
For the third term, by the trace inequality we have
\begin{align}
& -\langle \bbeta \cdot \Bn u_h,\pi_h( \varphi u- \varphi u^{\Ci}_h )\rangle_{\partial \Ct_h} \nn\\
\leq & C\sum_{F \in \Ce^0_h} \|\bbeta\|_{L^\infty(F)} \| \llbracket u_h\rrbracket  \|_{0,F} h^{-\frac{1}{2}}_F \| \pi_h( \varphi u - \varphi u^{\Ci}_h )  \|_{0,T_F}\nn\\
  \leq  & C\sum_{F \in \Ce^0_h} \|\bbeta\|_{L^\infty(F)} \| \llbracket u_h\rrbracket  \|_{0,F} h^{-\frac{1}{2}}_F \|  \varphi u - \varphi u^{\Ci}_h   \|_{0,\Omega_{T_F}} \nn\\ 
 \leq & \frac{C}{2 \delta} \sum_{F \in \Ce^0_h} \|\bbeta\|_{L^\infty(F)} h^{-1}_F\|    \llbracket u_h\rrbracket  \|^2_{0,F}  +C \delta \|\bbeta\|_{L^\infty(\Omega)}  \| \varphi \|^2_{L^\infty(\Omega)} ( \|u-u_h\|^2_{0,\Ct_h} +  \|u_h-u^{\Ci}_h\|^2_{0,\Ct_h} ),\nn
\end{align}
where $T_F$ is an element which satisfies $F \subset \partial T$ and the second inequality of the above estimate is deduced from the $L^2$ stability property of Cl\'{e}ment-type interpolation $\pi_h$ (cf. \cite{Verfurth98}). For the fourth term of the right-hand side of (\ref{I2_I3_eq1}), 
we can easily derive, for any $\delta > 0$, that
\begin{align}
&-\left( \bbeta \cdot \nabla(u^{\Ci}_h-u_h) + c(u^{\Ci}_h-u_h), \varphi( u-u^{\Ci}_h )  \right)_{\Ct_h}\nn\\
\leq & \frac{1}{\delta} \| \bbeta \varphi \|_{L^\infty(\Omega)} \|\nabla (u_h -  u^{\Ci}_h)\|^2_{0,\Ct_h} + C^d_3  \|u_h-u^{\Ci}_h\|^2_{0,\Ct_h}+  \frac{ \delta}{2}C^d_4 \| u-u_h \|^2_{0,\Ct_h},\nn 
\end{align}
where $C^d_3 = \frac{\delta}{2} \|\bbeta \varphi\|_{L^\infty(\Omega)} + (\frac{1}{2\delta}+1)\|c\varphi\|_{L^\infty(\Omega)} $, $C^d_4= \|\bbeta \varphi\|_{L^\infty(\Omega)} +\|c\varphi\|_{L^\infty(\Omega)} $. Thus, by the second approach, 
\begin{align}
\label{I2I3_r}
& I_2 + I_3\\
\nn
\leq &\frac{C}{2\delta} \sum_{T \in \Ct_h} \alpha^2_T \|R_h\|^2_{0,T}+  \frac{C}{2\delta} \sum_{F \in \Ce^0_h} \epsilon^{-\frac{1}{2}} \alpha_F  
\| \llbracket \Bq_h  \cdot\Bn\rrbracket \|^2_{0,F} + \delta C\interleave \varphi(u-u^{\Ci}_h)\interleave^2_{\Ct_h}\\
\nn
& +  \frac{C}{2 \delta} \sum_{F \in \Ce^0_h} \|\bbeta\|_{L^\infty(F)} h^{-1}_F\|    \llbracket u_h\rrbracket  \|^2_{0,F}  +C \delta \|\bbeta\|_{L^\infty(\Omega)} \| \varphi \|^2_{L^\infty(\Omega)} ( \|u-u_h\|^2_{0,\Ct_h} +  \|u_h-u^{\Ci}_h\|^2_{0,\Ct_h} )\\
\nn
& +  \frac{1}{\delta} \| \bbeta \varphi \|_{L^\infty(\Omega)} \|\nabla (u_h -  u^{\Ci}_h)\|^2_{0,\Ct_h} + C^d_3  \|u_h-u^{\Ci}_h\|^2_{0,\Ct_h}+  \frac{ \delta}{2}C^d_4 \| u-u_h \|^2_{0,\Ct_h}.
\end{align}

For the term $\delta C\interleave \varphi(u-u^{\Ci}_h)\interleave^2_{\Ct_h} $ in the right-hand sides of (\ref{I2_I3_1}) and (\ref{I2I3_r}), we derive that
\begin{align}
\label{I2I3_rr}
& \delta C\interleave \varphi(u-u^{\Ci}_h)\interleave^2_{\Ct_h}\\
\nn
\leq & \delta C \left(  \epsilon\| \nabla \varphi\|^2_{L^\infty(\Omega)} +\|\varphi\|^2_{L^\infty(\Omega)}  \right)\left(  \|u-u_h\|^2_{0,\Ct_h} + \|u_h-u^{\Ci}_h\|^2_{0,\Ct_h}  \right)  \\ 
\nn
& + \delta C \|\varphi\|^2_{L^\infty(\Omega)} \left( \epsilon^{-1}\|\Bq-\Bq_h\|^2_{0,\Ct_h} + \epsilon^{-1}\| \Bq_h + \epsilon \nabla u_h \|^2_{0,\Ct_h} + \epsilon\|\nabla(u_h - u^{\Ci}_h)\|^2_{0,\Ct_h} \right).
\end{align}

Now, we are ready to finish the proof.  
Combining (\ref{est_I1}), (\ref{I2_I3_1}) (which is given by the first approach for the estimate of $I_2+I_3$), 
(\ref{I2I3_rr}), (\ref{est_I4}) and Lemma~\ref{os_est_lemma}, and choosing $\delta$ small enough, we have
\begin{align}
\label{est-f1}
\left(\epsilon^{-1}\Vert \Be_\Bq \Vert_{\mathcal{T}_{h}}^{2} + \Vert \Be_u\Vert_{\mathcal{T}_{h}}^{2}\right) \leq C \big( \sum_{T\in \Ct_h}\eta^2_T 
+ \sum_{F\in \Ce^0_h}(\eta^{0}_{F,1})^{2} + \sum_{F\in \Ce^\partial_h} (\eta^\partial_{F,1})^2  \big),
\end{align}
where $
\eta_{F,1}^{0} = \left(  \epsilon^{-\frac{1}{2}} \alpha_F \| \llbracket \Bq_h\cdot\Bn \rrbracket  \|^2_{0,F} + \gamma_{F,1}\| \llbracket  u_h \rrbracket  \|^2_{0,F} \right)^{\frac{1}{2}}$, $\eta^{\partial}_{F,1} =    \gamma_{F,1}^{\frac{1}{2}} \|g-u_h \|_{0,F}$ and $\gamma_{F,1} =\frac{\epsilon}{h_F}+\frac{h_F}{\epsilon}+\epsilon^{-\frac{1}{2}}\alpha_F $. Similarly, we can obtain the following estimate by combining (\ref{est_I1}), (\ref{I2I3_r}) 
(which is given by the first approach for the estimate of $I_2+I_3$), (\ref{I2I3_rr}), (\ref{est_I4}) and Lemma~\ref{os_est_lemma}, and again 
choosing $\delta$ small enough,
\begin{align}
\label{est-f2}
\left(\epsilon^{-1}\Vert \Be_\Bq \Vert_{\mathcal{T}_{h}}^{2} + \Vert \Be_u\Vert_{\mathcal{T}_{h}}^{2}\right)
\leq C \big( \sum_{T\in \Ct_h}\eta^2_T + \sum_{F\in \Ce^0_h}(\eta^{0}_{F,2})^{2} + \sum_{F\in \Ce^\partial_h} (\eta^\partial_{F,2})^2  \big),
\end{align}
where $
\eta_{F,2}^{0} = \left(  \epsilon^{-\frac{1}{2}} \alpha_F \| \llbracket \Bq_h\cdot\Bn \rrbracket  \|^2_{0,F} + \gamma_{F,2}\| \llbracket  u_h \rrbracket  \|^2_{0,F} \right)^{\frac{1}{2}}$, $\eta^{\partial}_{F,2} =    \gamma_{F,2}^{\frac{1}{2}} \|g-u_h \|_{0,F}$ and $\gamma_{F,2} =\frac{ \epsilon + \|\boldsymbol{\beta}\|_{L^\infty(F)}  }{h_F}  + h_F$. 
Thus, combining (\ref{est-f1}) and (\ref{est-f2}) completes the proof.
\end{proof}

Now we are in a position to show the proof of the first main result. 

\begin{proof}(Proof of Theorem~\ref{thm_reliability})
Note that 
$
{\rm div}(\Bq-\Bq_h)  + \bbeta \cdot \nabla(u-u_h) = R_h - c(u-u_h)
$
and the fact $\alpha_T \leq 1$, one can easily obtain the following estimate by triangle inequality,
\begin{align}
\label{pre-est-2}
 \alpha^2_T\| {\rm div}(\Bq-\Bq_h)  + \bbeta \cdot \nabla(u-u_h)  \|^2_{0,T} \leq 2 \alpha^2_T\| R_h \|^2_{0,T} + 2 \|c( u-u_h )\|^2_{0,T}.
\end{align}
Moreover, for the term $\epsilon \|\nabla (u-u_h)\|^2_{0,\Ct_h} $, we have
\begin{align}
\label{pre-est-3}
\epsilon \|\nabla (u-u_h)\|^2_{0,\Ct_h} &= \epsilon^{-1} \|  \Bq - \Bq_h + \Bq_h +\epsilon \nabla u_h \|^2_{0,\Ct_h}\\
\nn
& \leq 2  \epsilon^{-1} \|  \Bq - \Bq_h \|^2_{0,\Ct_h} + 2 \epsilon^{-1} \| \Bq_h +\epsilon \nabla u_h \|^2_{0,\Ct_h} .
\end{align}
Combining (\ref{pre-est-2}), (\ref{pre-est-3}), Lemma~\ref{pre-reliability} and the fact that the jumps of $\Bq\cdot\Bn$ and $u$ vanish on all interior faces, 
we immediately have the following reliability estimate:
\begin{align}
\interleave ( \Bq-\Bq_h,u-u_h ) \interleave^2_h \leq C \Big( \sum_{T\in \Ct_h}\eta^2_T + \sum_{F\in \Ce^0_h}(\eta^{0}_F)^{2} + \sum_{F\in \Ce^\partial_h} (\eta^\partial_F)^2  \Big).\nn
\end{align}
So, the proof is complete.
\end{proof}

\section{Proof of efficiency}
In this section, we give the proofs of Efficiency (Theorem~\ref{thm_efficiency}) and 
Efficiency on refined element (Theorem~\ref{thm_refined_efficiency}), which 
address the efficiency of a posteriori error estimator in Definition~\ref{def_estimator}.

\subsection{Efficiency}
By using the element bubble function $B_T$ (cf. Lemma \ref{bubble_lemma}), we have the following estimate which is proven in Appendix C.
\begin{lemma}
\label{lemma_control_eta2}
For any $T\in \Ct_h$, we have that
\[
\alpha^2_T \| R_h \|^2_{0,T} \leq C \left(\alpha^2_T \| {\rm div}(\Bq-\Bq_h) +\bbeta \cdot \nabla (u-u_h) \|^2_{0,T} + \|u-u_h\|^2_{0,T} + {osc}^2_h(R_h,T) \right),
\]
where the data oscillation term ${osc}^2_h(R_h,T)$ and the projection $P_{W}$ are introduced in Theorem~\ref{thm_efficiency}. 
\end{lemma}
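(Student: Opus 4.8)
The plan is to use the standard element-bubble-function technique of Verf\"urth, adapted to the mixed setting. The starting point is that $R_h$ can be written as a genuine $L^2(T)$ function of the errors: testing the strong form~(\ref{HDG1}) against smooth functions gives ${\rm div}\,\Bq + \bbeta\cdot\nabla u + cu = f$ in $\Omega$, and since ${\rm div}\,\Bq = -\epsilon\Delta u = f - \bbeta\cdot\nabla u - cu \in L^2(\Omega)$, subtracting the definition~(\ref{elem_residual}) of $R_h$ yields the pointwise identity
\[
R_h|_T = {\rm div}(\Bq-\Bq_h) + \bbeta\cdot\nabla(u-u_h) + c(u-u_h) \qquad \text{on } T,
\]
where every term on the right lies in $L^2(T)$ because $\Bq_h|_T$ is a polynomial. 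Note that no integration by parts is needed: we keep ${\rm div}(\Bq-\Bq_h)+\bbeta\cdot\nabla(u-u_h)$ together as a single $L^2$ quantity, exactly matching the term appearing in the claimed bound.

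First I would set $\rho_T := (P_W R_h)|_T \in \Cp_p(T)$ and apply the first bubble estimate of Lemma~\ref{bubble_lemma}, $\|\rho_T\|^2_{0,T} \le C_4\,(\rho_T, B_T\rho_T)_T$. Then split $(\rho_T, B_T\rho_T)_T = (R_h, B_T\rho_T)_T + (\rho_T - R_h, B_T\rho_T)_T$ and insert the identity above into the first summand. Cauchy--Schwarz together with the second bubble bound $\|B_T\rho_T\|_{0,T}\le C_5\|\rho_T\|_{0,T}$ and $c\in L^\infty(\Omega)$ gives
\[
(R_h, B_T\rho_T)_T \le C\left(\|{\rm div}(\Bq-\Bq_h)+\bbeta\cdot\nabla(u-u_h)\|_{0,T} + \|u-u_h\|_{0,T}\right)\|\rho_T\|_{0,T},
\]
while the remainder is controlled directly by $\|\rho_T - R_h\|_{0,T}\|B_T\rho_T\|_{0,T}\le C_5\|R_h - P_W R_h\|_{0,T}\|\rho_T\|_{0,T}$.

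Combining these, dividing through by $\|\rho_T\|_{0,T}$, and using the triangle inequality $\|R_h\|_{0,T}\le \|\rho_T\|_{0,T}+\|R_h - P_W R_h\|_{0,T}$ bounds $\|R_h\|_{0,T}$ by $C$ times the sum of $\|{\rm div}(\Bq-\Bq_h)+\bbeta\cdot\nabla(u-u_h)\|_{0,T}$, $\|u-u_h\|_{0,T}$, and $\|R_h - P_W R_h\|_{0,T}$. Squaring, multiplying by $\alpha_T^2$, and using $\alpha_T\le 1$ to absorb $\alpha_T^2\|u-u_h\|^2_{0,T}\le\|u-u_h\|^2_{0,T}$ while recognising $\alpha_T^2\|R_h-P_WR_h\|^2_{0,T} = {osc}^2_h(R_h,T)$ completes the proof. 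There is no real obstacle here: the argument is entirely local and routine. The only points that deserve a word of care are verifying that ${\rm div}(\Bq-\Bq_h)\in L^2(T)$ so that the pointwise identity is legitimate, and noting that the generic constant $C$ is permitted to depend on $\|c\|_{L^\infty(\Omega)}$ and on the shape-regularity constants $C_4,C_5$ of Lemma~\ref{bubble_lemma}, all of which are independent of $\epsilon$ and the mesh size.
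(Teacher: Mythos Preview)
Your proof is correct and follows essentially the same route as the paper: both reduce to bounding $\|P_W R_h\|_{0,T}$ via the element bubble function $B_T$, test against $w=B_T P_W R_h$, use the identity $R_h = {\rm div}(\Bq-\Bq_h)+\bbeta\cdot\nabla(u-u_h)+c(u-u_h)$ together with the estimates of Lemma~\ref{bubble_lemma}, and handle the oscillation term by a triangle-inequality splitting. The only cosmetic difference is that the paper splits off the oscillation first and then bounds $\|P_W R_h\|^2_{0,T}$, whereas you bound $\|P_W R_h\|_{0,T}$ and apply the triangle inequality at the end; the content is the same.
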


\begin{remark}
For any $w\in H^1_0(T)$, we also have
\[
(R_h,w)_T = -(\Bq-\Bq_h,\nabla w)_T + ( \bbeta \cdot \nabla(u-u_h) +c(u-u_h)  ,w)_T.
\]
By the similar technique, we can deduce that
\begin{align}
\label{eff-1}
&\alpha^2_T \| R_h \|^2_{0,T}\\
\nn
\leq & C \left(  \epsilon^{-1}\| \Bq-\Bq_h \|^2_{0,T} + \alpha^2_T \| \bbeta \cdot \nabla (u-u_h)+c(u-u_h) \|^2_{0,T}  + osc^2_h(R_h,T)   \right). 
\end{align}
\end{remark}

With Lemma~\ref{lemma_control_eta2}, we are in a position to show the proof of the second main result. 

\begin{proof}(Proof of Theorem~\ref{thm_efficiency})
By (\ref{estimator1}), (\ref{estimator2}) and the fact that $\Bq\cdot \Bn$ and $u$ are continuous across all interior faces, we have 
(\ref{in_face_efficiency}) and (\ref{boundary_face_efficiency}) immediately. 

For any $T\in \Ct_h$, adding and subtracting $\Bq$ into $\Bq_h+\epsilon \nabla u_h$ yields
\begin{align}
\label{control_eta1}
\epsilon^{-1}\| \Bq_h + \epsilon \nabla u_h  \|^2_{0,T} \leq 2 \epsilon^{-1} \|\Bq-\Bq_h\|^2_{0,T} + 2 \epsilon \| \nabla(u-u_h) \|^2_{0,T}.
\end{align}
Combining (\ref{control_eta1}) and Lemma~\ref{lemma_control_eta2}, we can conclude that (\ref{elem_efficiency}) is true.
\end{proof}

\subsection{Efficiency on refined element}
The proof of (\ref{ineq_refined_efficiency2}) in Theorem~\ref{thm_refined_efficiency} can be directly obtained by (\ref{eff-1}) and (\ref{control_eta1}), and the estimate (\ref{ineq_refined_efficiency}) is an immediate consequence of 
the following Lemma~\ref{lemma_refined_qn} and Lemma~\ref{lemma_refined_u}.

\begin{lemma}
\label{lemma_refined_qn}
For any $F\in \Ce^0_h$, if $h_F \leq O(\epsilon)$, we have
\begin{align*}
& \epsilon^{-\frac{1}{2}} \alpha_F \| \llbracket \Bq_h\cdot \Bn \rrbracket \|^2_F\\
\leq & C \sum_{T \in \omega_F} \left(  \epsilon^{-1}\| \Bq-\Bq_h \|^2_{0,T} + \epsilon\| \nabla(u-u_h) \|^2_{0,T} 
+\|u-u_h\|^2_{0,T} + osc^2_h(R_h,T)  \right).
\end{align*}
\end{lemma}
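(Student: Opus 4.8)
The goal is to bound the flux-jump estimator $\epsilon^{-1/2}\alpha_F\|\llbracket\Bq_h\cdot\Bn\rrbracket\|_{0,F}^2$ on a face $F$ with $h_F\leq O(\epsilon)$ (so that $\alpha_F=h_F\epsilon^{-1/2}$ and the regime is locally diffusion-dominated). The standard Verf\"urth-type strategy is to test the local residual equation with a face bubble function times the jump. First I would let $\sigma:=\llbracket\Bq_h\cdot\Bn\rrbracket\in\Cp_p(F)$, extend it to the patch $\omega_F$, and multiply by the face bubble $B_F$; by the first bubble inequality in Lemma~\ref{bubble_lemma}, $\|\sigma\|_{0,F}^2\leq C_6\langle\sigma,B_F\sigma\rangle_F$. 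The plan is to rewrite $\langle\sigma,B_F\sigma\rangle_F$ as a sum of volume integrals over the two elements in $\omega_F$ by integration by parts, using that $w:=B_F\sigma\in H^1_0(\omega_F)$ vanishes on $\partial\omega_F$.

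\textbf{Key steps.} Integrating $\Bq_h\cdot\nabla w$ by parts over each $T\in\omega_F$ and summing, the interior boundary contributions collapse to exactly $\langle\sigma,w\rangle_F=\langle\llbracket\Bq_h\cdot\Bn\rrbracket,B_F\sigma\rangle_F$, so
\begin{align*}
\langle\sigma,B_F\sigma\rangle_F=\sum_{T\in\omega_F}\big[(\Bq_h,\nabla w)_T+({\rm div}\,\Bq_h,w)_T\big].
\end{align*}
Now I would insert the exact flux $\Bq$: since $\Bq\cdot\Bn$ is continuous, $\sum_T\langle\Bq\cdot\Bn,w\rangle_{\partial T}=0$, so the same manipulation gives $\sum_T[(\Bq,\nabla w)_T+({\rm div}\,\Bq,w)_T]=0$. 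Subtracting, and using ${\rm div}\,\Bq_h=f-\bbeta\cdot\nabla u_h-cu_h-R_h=-({\rm div}(\Bq-\Bq_h))+R_h-(\bbeta\cdot\nabla u_h+cu_h)+({\rm div}\,\Bq+\ldots)$ — more directly, using $\Bq=-\epsilon\nabla u$, ${\rm div}\,\Bq+\bbeta\cdot\nabla u+cu=f$, and $R_h=f-{\rm div}\,\Bq_h-\bbeta\cdot\nabla u_h-cu_h$ — I would arrive at
\begin{align*}
\langle\sigma,B_F\sigma\rangle_F=\sum_{T\in\omega_F}\big[(\Bq_h-\Bq,\nabla w)_T+(R_h,w)_T+(\bbeta\cdot\nabla(u-u_h)+c(u-u_h),w)_T\big].
\end{align*}
Then I apply Cauchy--Schwarz on each term and the bubble bounds from Lemma~\ref{bubble_lemma}: the last two lines of that lemma give $\|w\|_{0,\omega_F}\leq C_7\epsilon^{1/4}\alpha_F^{1/2}\|\sigma\|_{0,F}$ and $\|\nabla w\|_{0,\omega_F}\leq C_8\epsilon^{-1/4}\alpha_F^{-1/2}\|\sigma\|_{0,F}$. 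Tracking the $\epsilon$- and $h_F$-powers, the $(\Bq_h-\Bq,\nabla w)$ term contributes $\epsilon^{-1/4}\alpha_F^{-1/2}\|\Bq-\Bq_h\|\|\sigma\|_{0,F}$, which after multiplying through by $\epsilon^{-1/2}\alpha_F$ and using $\|\sigma\|_{0,F}^2\leq C_6\langle\sigma,B_F\sigma\rangle_F$ to cancel one power of $\|\sigma\|_{0,F}$, yields $\epsilon^{-1}\|\Bq-\Bq_h\|_{0,\omega_F}^2$; the $R_h$ term gives, after replacing $R_h$ by $P_WR_h$ plus oscillation, $\alpha_F^2\|R_h\|^2$-type terms controlled by $\epsilon^{-1}\|\Bq-\Bq_h\|^2+\|u-u_h\|^2+osc_h^2(R_h,T)$ via the remark following Lemma~\ref{lemma_control_eta2} (using $h_F\leq O(\epsilon)$ so $\alpha_F\leq Ch_F\epsilon^{-1/2}$ and $\alpha_F^2\epsilon^{-1/2}\alpha_F\cdot(\ldots)$ collapses correctly); and the convection/reaction term gives $\epsilon^{1/2}\alpha_F^{1/2}\cdot\epsilon^{-1/2}\alpha_F\|\sigma\|\cdot\|\bbeta\cdot\nabla(u-u_h)+c(u-u_h)\|$, and here the condition $h_F\leq O(\epsilon)$ is used to absorb $\|\nabla(u-u_h)\|$ with the factor $\epsilon$ to produce the $\epsilon\|\nabla(u-u_h)\|^2$ and $\|u-u_h\|^2$ terms.

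\textbf{Main obstacle.} The delicate point is the bookkeeping of the $\epsilon$- and $h_F$-weights so that every term ends up with exactly the weights appearing on the right-hand side of the claimed inequality, and in particular making sure the hypothesis $h_F\leq O(\epsilon)$ (equivalently $\alpha_F=h_F\epsilon^{-1/2}$ and $h_F/\epsilon\leq C$) is invoked precisely where needed — namely to control the convective term $\|\bbeta\cdot\nabla(u-u_h)\|_{0,\omega_F}$, whose natural scaling would otherwise produce an $\epsilon^{-1}$-type blow-up, but which here is tamed because $\epsilon^{-1/2}\alpha_F\cdot\epsilon^{1/2}\alpha_F^{1/2}\cdot\epsilon^{-1/4}\alpha_F^{-1/2}=\epsilon^{-1/2}\alpha_F\cdot\epsilon^{1/4}$ pairs against $\|\bbeta\|_{L^\infty}\|\nabla(u-u_h)\|$ and, using $\alpha_F\leq C\epsilon^{1/2}$ from $h_F\leq O(\epsilon)$, reduces to a bounded multiple of $\epsilon\|\nabla(u-u_h)\|^2$ after Young's inequality. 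A secondary technical issue is handling the data oscillation: replace $R_h$ by its projection $P_WR_h$ inside the volume integral, estimate the projected part via the inverse/bubble bound and the remaining part as $osc_h(R_h,T)$; this is routine given the remark after Lemma~\ref{lemma_control_eta2}. Summing over the (at most two) elements in $\omega_F$ completes the proof.
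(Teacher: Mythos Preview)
Your proposal is correct and follows essentially the same route as the paper: test the jump against the face bubble $w=B_F\llbracket\Bq_h\cdot\Bn\rrbracket$, use integration by parts (together with $\llbracket\Bq\cdot\Bn\rrbracket=0$) to write $\langle\llbracket\Bq_h\cdot\Bn\rrbracket,w\rangle_F$ as the sum over $T\in\omega_F$ of the flux-error, residual, and convection--reaction volume integrals, then apply the bubble estimates of Lemma~\ref{bubble_lemma} to obtain $\epsilon^{-1/2}\alpha_F\|\llbracket\Bq_h\cdot\Bn\rrbracket\|_{0,F}^2\leq C\sum_T(\epsilon^{-1}\|\Bq-\Bq_h\|_{0,T}^2+\alpha_T^2\|R_h\|_{0,T}^2+\alpha_T^2\|\bbeta\cdot\nabla(u-u_h)+c(u-u_h)\|_{0,T}^2)$, and finally invoke the remark after Lemma~\ref{lemma_control_eta2} together with $\alpha_T^2\leq O(\epsilon)$ (from $h_F\leq O(\epsilon)$) to absorb the last two terms. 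The only slip is a harmless sign on the $(R_h,w)_T$ term in your identity, which does not affect the estimate.
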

\begin{proof}
Note that for any $w \in H^1_0(\omega_F)$, we have
\begin{align*}
\langle \llbracket \Bq_h\cdot\Bn \rrbracket, w \rangle_F& = \sum_{T \in \omega_F} \langle (\Bq- \Bq_h )\cdot \Bn, w \rangle_{\partial T} = \sum_{T \in \omega_F}  \left( ({\rm div}(\Bq-\Bq_h),w)_T + (\Bq-\Bq_h,\nabla w)_T \right) \\
&=\sum_{T \in \omega_F} \left( ( R_h - \bbeta \cdot \nabla(u-u_h)  - c(u-u_h),w )_T+ (\Bq-\Bq_h,\nabla w)_T \right),
\end{align*}
Applying Lemma \ref{bubble_lemma} with $w = B_F \llbracket \Bq_h\cdot\Bn \rrbracket$, we have
\begin{align*}
&\epsilon^{-\frac{1}{2}} \alpha_F \| \llbracket \Bq_h\cdot \Bn \rrbracket\|^2_F\nn\\
& \leq C \sum_{T \in \omega_F}  \left(  \epsilon^{-1}\| \Bq-\Bq_h \|^2_{0,T}  + \alpha^2_T \| R_h \|^2_{0,T} + \alpha^2_T \| \bbeta \cdot \nabla (u-u_h)+c(u-u_h) \|^2_{0,T}  \right).
\end{align*}
If $h_F \leq O(\epsilon)$, we have $\alpha^2_T \leq O(\epsilon)$ for $T\in \omega_F$. Then, by the above inequality and the estimate (\ref{eff-1}),
we can conclude that the proof is complete.
\end{proof}

By the similar approach as in Lemma 3.4 of \cite{Cockburn14}, we get the following estimate.
\begin{lemma}
\label{lemma_refined_u}
For any $F\in \Ce^0_h$, if $h_F \leq O(\epsilon)$, we have
\[
\gamma_F \|\llbracket u_h \rrbracket    \|^2_F \leq C \sum_{T \in \omega_F} \left(  \epsilon^{-1}\| \Bq-\Bq_h \|^2_{0,T} + \epsilon\| \nabla(u-u_h) \|^2_{0,T}  \right).
\]
\end{lemma}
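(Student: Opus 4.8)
The plan is to strip off the weight $\gamma_F$ first and then bound the jump $\llbracket u_h\rrbracket$ on $F$ by local quantities using only the HDG equation (\ref{P1}). Since $\alpha_F\le h_F\epsilon^{-1/2}$ by (\ref{A_alpha}), we have $\epsilon^{-1/2}\alpha_F\le h_F/\epsilon$; hence in the regime $h_F\le\mathcal O(\epsilon)$ both $h_F/\epsilon$ and $h_F$ are bounded and $\epsilon/h_F\ge$ const, so each of the three summands in the first argument of the minimum defining $\gamma_F$ in (\ref{A_gamma}) is at most a fixed multiple of $\epsilon/h_F$ (also using boundedness of $\|\bbeta\|_{L^\infty(\Omega)}$). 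Thus $\gamma_F\le C\epsilon/h_F$ on such faces, and it suffices to prove
\[
\|\llbracket u_h\rrbracket\|^2_{0,F}\le C\,h_F\,\epsilon^{-2}\sum_{T\in\omega_F}\Big(\|\Bq-\Bq_h\|^2_{0,T}+\epsilon^2\|\nabla(u-u_h)\|^2_{0,T}\Big).
\]

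Next I would rewrite (\ref{P1}): integrating by parts elementwise gives, for all $\Br\in\BV_h$,
\[
(\epsilon^{-1}\Bq_h+\nabla u_h,\Br)_{\Ct_h}=\langle u_h-\widehat u_h,\Br\cdot\Bn\rangle_{\partial\Ct_h}.
\]
Fix $T\in\omega_F$, restrict $\Br$ to be supported in $T$, and choose $\Br\in\BV(T)=(\Cp_p(T))^d$ with $\Br\cdot\Bn=u_h-\widehat u_h$ on $\partial T$; this is possible because $u_h-\widehat u_h$ is, face by face, a polynomial of degree $\le p$, and the normal–trace map from $(\Cp_p(T))^d$ onto $\prod_{F'\subset\partial T}\Cp_p(F')$ is surjective. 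Normalising by the usual reference–element (Piola) scaling so that $\|\Br\|_{0,T}\le C\,h_T^{1/2}\|u_h-\widehat u_h\|_{0,\partial T}$, the Cauchy–Schwarz inequality yields
\[
\|u_h-\widehat u_h\|^2_{0,\partial T}\le C\,h_T^{1/2}\,\epsilon^{-1}\|\Bq_h+\epsilon\nabla u_h\|_{0,T}\,\|u_h-\widehat u_h\|_{0,\partial T},
\]
and hence $\|u_h-\widehat u_h\|_{0,\partial T}\le C\,h_T^{1/2}\epsilon^{-1}\|\Bq_h+\epsilon\nabla u_h\|_{0,T}$. Since $\epsilon^{-1}\Bq+\nabla u=0$ gives $\Bq_h+\epsilon\nabla u_h=-(\Bq-\Bq_h)-\epsilon\nabla(u-u_h)$, we get $\|\Bq_h+\epsilon\nabla u_h\|_{0,T}\le\|\Bq-\Bq_h\|_{0,T}+\epsilon\|\nabla(u-u_h)\|_{0,T}$.

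Finally, on the interior face $F=\partial T^+\cap\partial T^-$ the numerical trace $\widehat u_h$ is single–valued, so $\llbracket u_h\rrbracket=(u_h^+-\widehat u_h)-(u_h^--\widehat u_h)$ there, whence $\|\llbracket u_h\rrbracket\|_{0,F}\le\|u_h^+-\widehat u_h\|_{0,\partial T^+}+\|u_h^--\widehat u_h\|_{0,\partial T^-}$. Inserting the previous bounds, using $h_{T^\pm}\simeq h_F$ (shape regularity), squaring, and then multiplying by $\gamma_F\le C\epsilon/h_F$ gives exactly the asserted estimate. The main obstacle is the construction of the local test function $\Br$ with prescribed polynomial normal trace on $\partial T$ and the correct $h_T^{1/2}$ scaling of $\|\Br\|_{0,T}$; this rests on surjectivity of the normal–trace operator on $(\Cp_p(T))^d$ (equivalently, that this space contains a $\mathrm{BDM}_p$ space with its canonical degrees of freedom) together with a reference–element scaling argument, just as in Lemma 3.4 of \cite{Cockburn14}. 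Everything else is elementary bookkeeping of the $h_F$–$\epsilon$ dependence and the identity $\Bq+\epsilon\nabla u=0$.
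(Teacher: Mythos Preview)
Your argument is correct. The reduction $\gamma_F\le C\epsilon/h_F$ under $h_F\le\mathcal O(\epsilon)$ is right, the rewriting of (\ref{P1}) as $(\epsilon^{-1}\Bq_h+\nabla u_h,\Br)_T=\langle u_h-\widehat u_h,\Br\cdot\Bn\rangle_{\partial T}$ is valid, and the $\mathrm{BDM}_p$-type lifting with $\|\Br\|_{0,T}\le Ch_T^{1/2}\|u_h-\widehat u_h\|_{0,\partial T}$ (the surjectivity and scaling you invoke are indeed standard, cf.\ Lemma~A.1 in \cite{Cockburn14}) gives the bound on $\|u_h-\widehat u_h\|_{0,\partial T}$ cleanly. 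The passage to $\llbracket u_h\rrbracket$ via the single-valuedness of $\widehat u_h$ is immediate.

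Your route differs from the paper's in one respect worth noting. The paper does \emph{not} lift the full polynomial trace; instead it splits $\llbracket u_h\rrbracket=P_{M_0}\llbracket u_h\rrbracket+(I-P_{M_0})\llbracket u_h\rrbracket$ into its facewise constant part and the remainder. The constant part is controlled by testing (\ref{P1}) with a lowest-order Raviart--Thomas function $\Br_0\in H(\mathrm{div},\omega_F)$ whose normal trace equals $P_{M_0}\llbracket u_h\rrbracket$ on $F$ and vanishes on $\partial\omega_F$; the $H(\mathrm{div})$-conformity makes the $\widehat u_h$ contributions from the two sides of $F$ cancel into the jump automatically. The oscillatory part $(I-P_{M_0})\llbracket u_h\rrbracket=(I-P_{M_0})\llbracket u-u_h\rrbracket$ is then bounded by $Ch_F\|\nabla(u-u_h)\|_{0,\omega_F}^2$ via trace and Poincar\'e inequalities. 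Your approach avoids this split altogether by working with the full $(\Cp_p(T))^d$ test space element by element and exploiting the single-valuedness of $\widehat u_h$ only at the very end; this is more direct and yields the same final estimate. The paper's version, on the other hand, needs only the simplest $RT_0$ lifting and isolates the contribution of $\|\nabla(u-u_h)\|$ to the high-frequency part of the jump, which is conceptually informative even if not needed for the stated bound.
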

\begin{proof}
We denote $P_{M_0}$ by the $L^2$ orthogonal projection operator onto the space $M_{0,h}$, where $M_{0,h}:=\{ \mu\in L^2(\mathcal E_h)\, : \,  \mu|_F\in \mathcal P_0(F)\ {\rm for\ all }\ F\in \mathcal E_h\}$. By the equation (\ref{P1}) in the HDG method, we have that, for any $T\in \Ct_h$ and any $\Br $ in the lowest order Raviart-Thomas space $RT_0(T)$,
\[
(\epsilon^{-1} {\Bq}_h, \Br)_T-(u_h, {{\rm div \, {\Br}}})_T+\langle \widehat u_h, {{\Br}\cdot {\Bn}}\rangle_{\partial T}=0.
\]
Since $\widehat u_h$ is single-valued on $F$, then for any $\Br \in H({\rm div}, \omega_F)$, we have
\[
\epsilon^{-1} ( \Bq_h + \epsilon \nabla u_h, \Br )_{\omega_F} = -\sum_{T \in \omega_F} \sum_{e \in \partial T \setminus F} 
\langle \widehat u_h-u_h, \Br \cdot \Bn \rangle_F + \langle \llbracket u_h \rrbracket , \Br \cdot \Bn \rangle_F.
\]
We take $\Br_{0}\in H({\rm div}, \omega_F)$ such that $\Br_{0}|_{T} \in RT_0(T) \text{ for all } T \in \omega_F$, 
$ \int_F \Br_{0} \cdot \Bn = \int_F P_{M_0}\llbracket u_h \rrbracket$ and $\int_e \Br_{0} \cdot \Bn = 0$ for all $e \in \partial \omega_{F}$. Then we obtain
\begin{align*}
\|P_{M_0}\llbracket u_h \rrbracket\|^2_{0,F} &=  \langle \llbracket u_h \rrbracket , \Br_{0} \cdot \Bn \rangle_F = \epsilon^{-1} ( \Bq_h + \epsilon \nabla u_h, 
\Br_{0} )_{\omega_F} \\
& \leq C \sum_{T\in \omega_F} \epsilon^{-1} \| \Bq_h + \epsilon \nabla u_h \|_{0,T} \| \Br_{0}  \|_{0,T}.
\end{align*}
Obviously, $\| \Br_{0} \|_{0,T} \leq C h^{\frac{1}{2}}_F \| \Br_{0} \cdot \Bn \|_{0,F}$ (cf. Lemma A.1 in  \cite{Cockburn14}). Therefore we have
\begin{align}\label{eff-2}
\|P_{M_0}\llbracket u_h \rrbracket\|^2_{0,F} \leq C \frac{h_F}{\epsilon^2} \|  \Bq_h + \epsilon \nabla u_h  \|^2_{0,\omega_F}.
\end{align}
Note that 
\begin{align*}
\| (I-P_{M_0})\llbracket u_h \rrbracket\|^2_{0,F} = \| (I-P_{M_0})\llbracket u- u_h \rrbracket\|^2_{0,F}  \leq 2 \sum_{T\in \omega_F} \| (I-P_{W_0})(u -  u_h|_T)\|^2_{0,F}
\end{align*}
where $P_{W_0}$ is the $L^2$ orthogonal projection operator onto the space of piecewise constant functions on each element. Then the trace theorem and Poincar\'{e}'s inequality indicate that 
\begin{align}
\| (I-P_{M_0})\llbracket u_h \rrbracket\|^2_{0,F} \leq C h_F \| \nabla(u-u_h) \|^2_{0,\omega_F}. \label{eff-3}
\end{align}
Combining (\ref{eff-2}) and (\ref{eff-3}) yields that
\[
\| \llbracket u_h \rrbracket\|^2_{0,F} \leq C \left( h_F \| \nabla(u-u_h) \|^2_{0,\omega_F}  +  \frac{h_F}{\epsilon^2} \|  \Bq_h + \epsilon \nabla u_h  \|^2_{0,\omega_F} \right).
\]
By subtracting and adding $\Bq$ into $\Bq_h + \epsilon \nabla u_h  $ and the fact that $\gamma_F \frac{h_F}{\epsilon} \leq O(1)$ if $h_F \leq O(\epsilon)$, 
we can conclude that the proof is complete.
\end{proof}
\section{Numerical experiments}

In this section, we present numerical results of the adaptive HDG method for two dimensional model problems to show how the meshes are generated adaptively and how the estimators and the errors behave due to the effects from the quantity $\epsilon/\|\boldsymbol{\beta}  \|_{L^\infty(\Omega)}$. The adaptive HDG procedure consists of adaptive loops of the cycle ``SOLVE $\rightarrow $ ESTIMATE $\rightarrow $  MARK $\rightarrow $  REFINE''. In the step SOLVE, we choose the direct method such as the the multifrontal method to solve the discrete system. In the step ESTIMATE, we adopt the reliable and efficient a posteriori error estimators suggested in the above sections. In the step REFINE, we apply the newest vertex bisection algorithm (see \cite{RS07} and the references therein for details). For the step MARK, we use the bulk algorithm which defines a set $\mathcal {M}^F_h$ of marked edges such that 
\[
\sum_{F \in \mathcal {M}^F_h} \left((  \eta^0_F))^2+(\eta^{\partial}_F)^2\right) \geq \theta_1 \sum_{F \in \Ce_h} \left((  \eta^0_F))^2+(\eta^{\partial}_F)^2\right)
\]
and a set $\mathcal {M}^T_h$ of marked triangles such that
\[
\sum_{T \in \mathcal {M}^T_h} \eta_T^2\geq \theta_2 \sum_{T \in \Ct_h} \eta_T^2,
\]
where $\theta_1$ and $\theta_2$ are optional parameters and we use $\theta_1=\theta_2=0.5$ in the following experiments. For brevity, we denote by $\eta^2_1 = \sum_{T \in \Ct_h} \eta_T^2$ and $\eta^2_2 = \sum_{F \in \Ce_h} \left((  \eta^0_F))^2+(\eta^{\partial}_F)^2\right)$.

For any $(\Bp,w) \in \BH^1(\Ct_h) \times H^1(\Ct_h)$, we define an error norm 
$
\| (\Bp,w)\|^2_h 
= \sum_{T\in \Ct_h} ( \epsilon^{-1} \| \Bp \|^2_{0,T} + \|w\|^2_{0,T} ).
$
In the following experiments, the  adaptive HDG method is implemented for piecewise linear (HDG-P1), quadratic (HDG-P2), 
and cubic (HDG-P3) finite element spaces. The figures displaying the convergence history are all plotted in log-log coordinates.


\begin{figure}[htbp]
\begin{center}
\includegraphics[width=6cm,height=5.5cm]{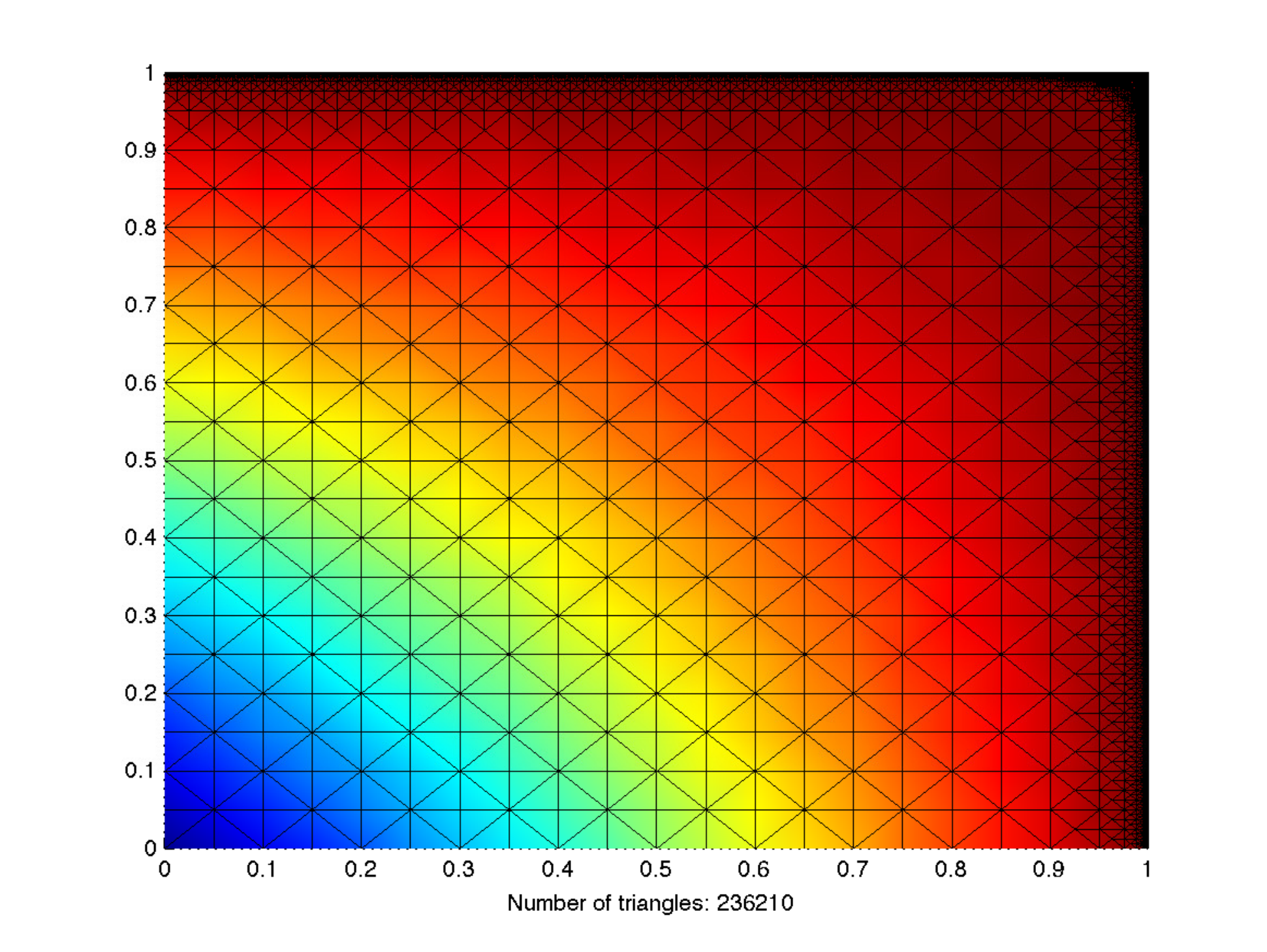}
\includegraphics[width=6cm,height=5.5cm]{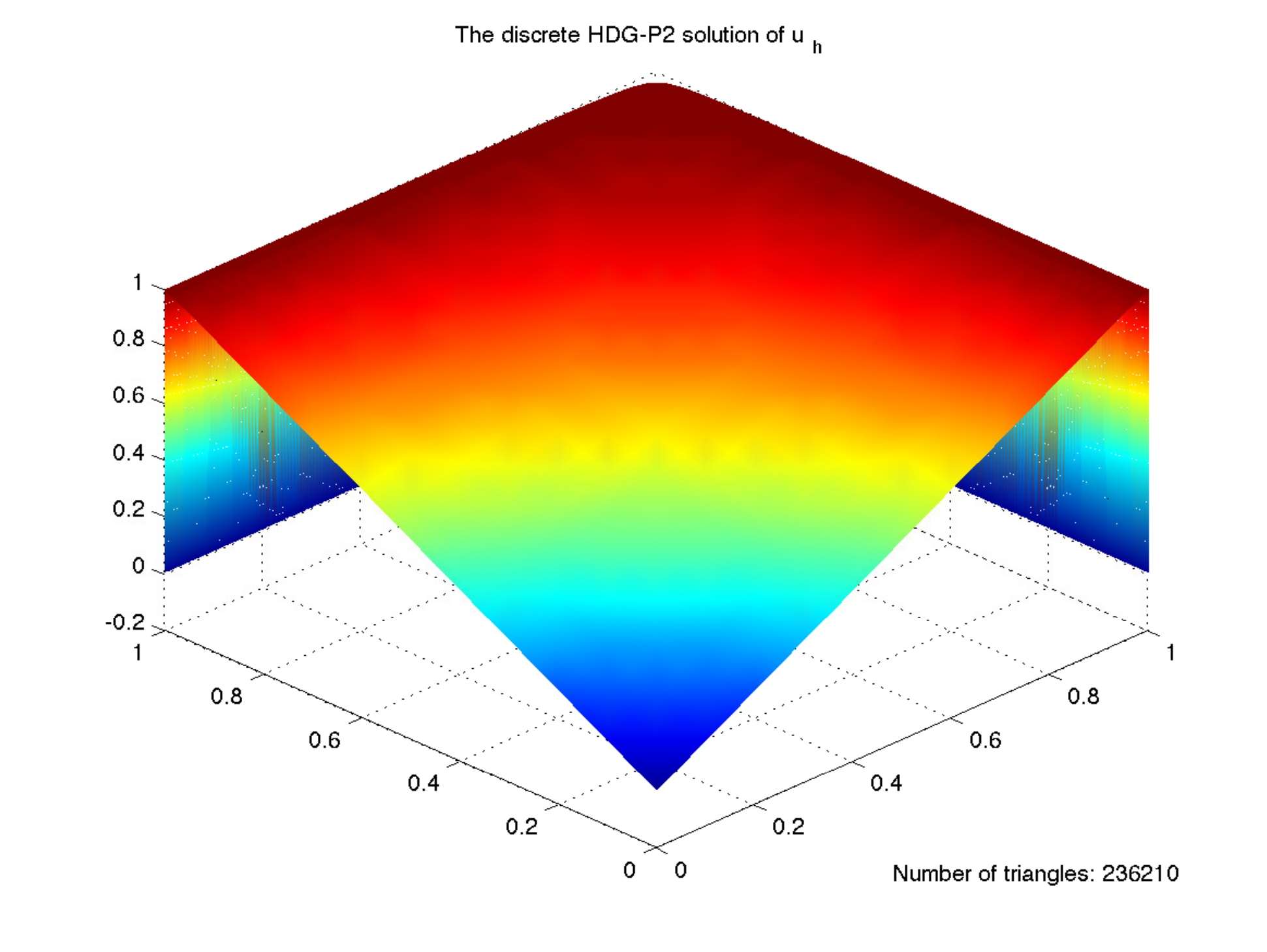}
\end{center}
\vspace{-0.2cm}
\caption{\footnotesize Adaptively refined mesh (left) and 3D plot of the corresponding approximate solution $u_h$ (right)  by HDG-P2 for the case $\epsilon = 10^{-4}$.} \label{ex1-1}
\end{figure}

\begin{exm}
{\rm We consider a boundary layer problem in \cite{AyusoMarini:cdf}. The convection-diffusion equation (\ref{cd_eqs}) is solved in 
the domain $\Omega = [0,1]\times [0,1]$ with $\beta = [1,1]^T$, $c=0$. The source term $f$ and the Dirichlet boundary condition 
are chosen such that 
\[
u(x,y) = x + y(1-x) + \frac{ e^{-1/\epsilon} - e^{-(1-x)(1-y)/\epsilon} }{1-e^{-1/\epsilon}}
\]
is the exact solution.

\begin{figure}[htbp]
\begin{center}
\includegraphics[width=6cm,height=5.5cm]{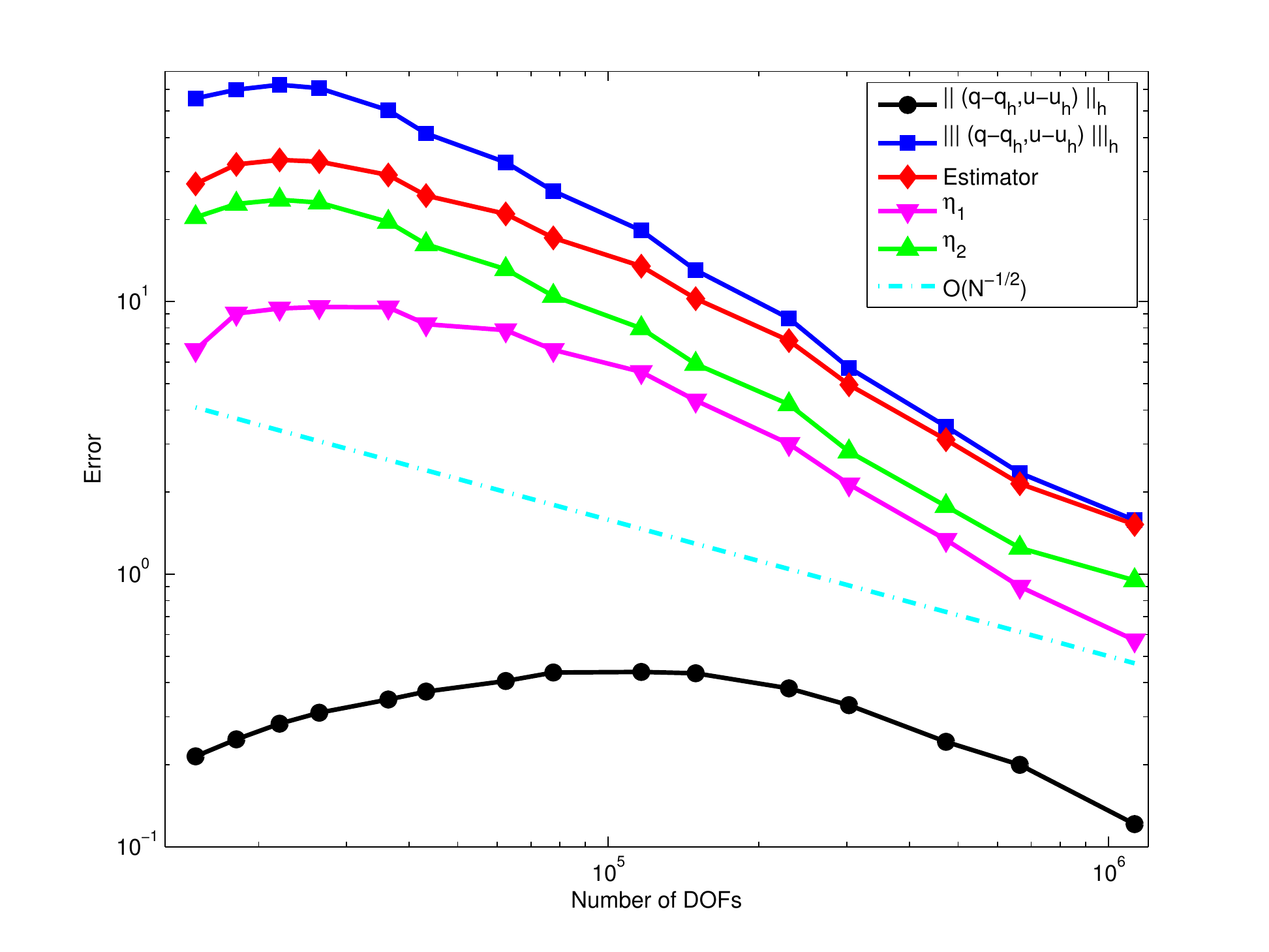}
\includegraphics[width=6cm,height=5.5cm]{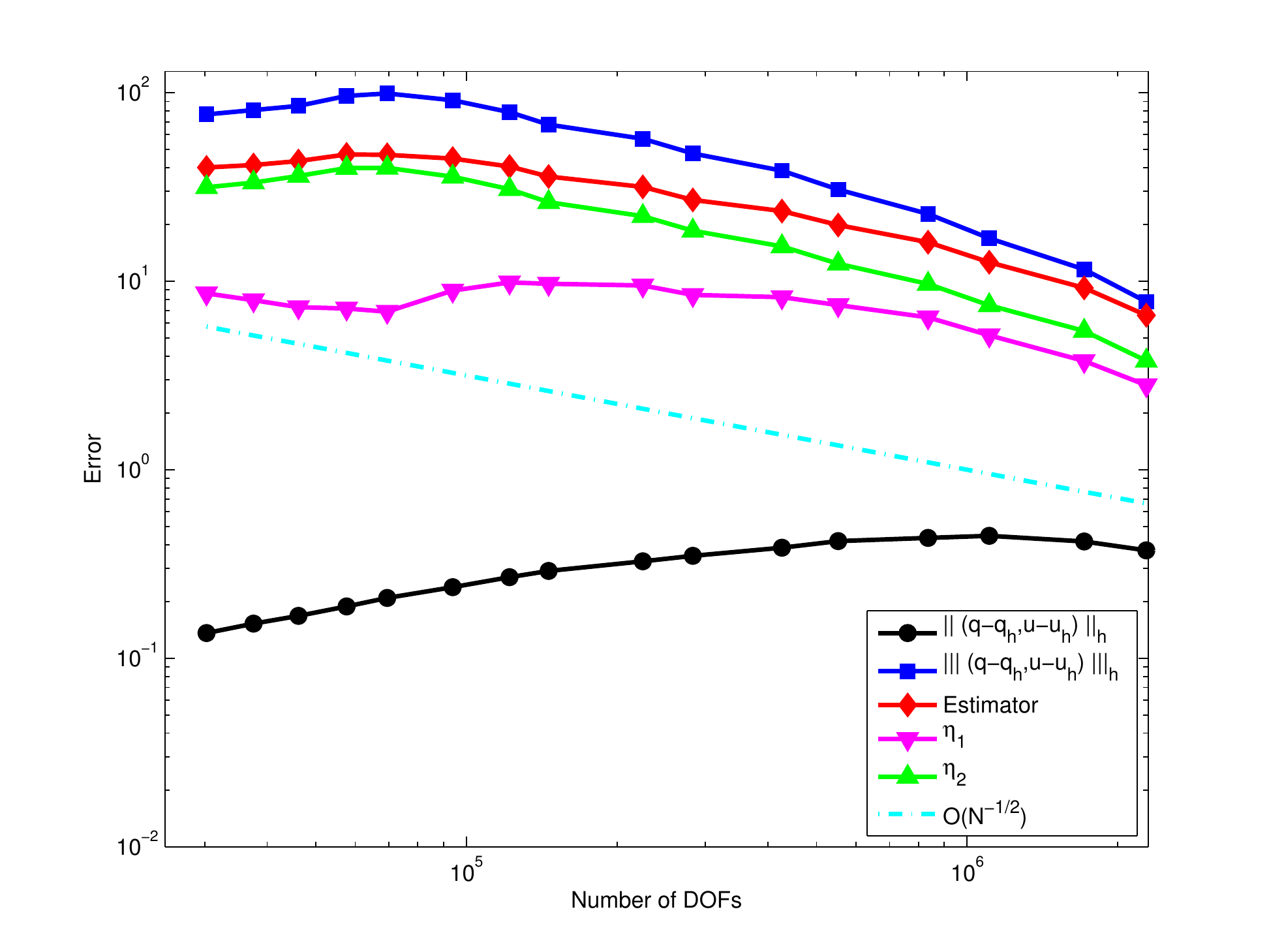}
\includegraphics[width=6cm,height=5.5cm]{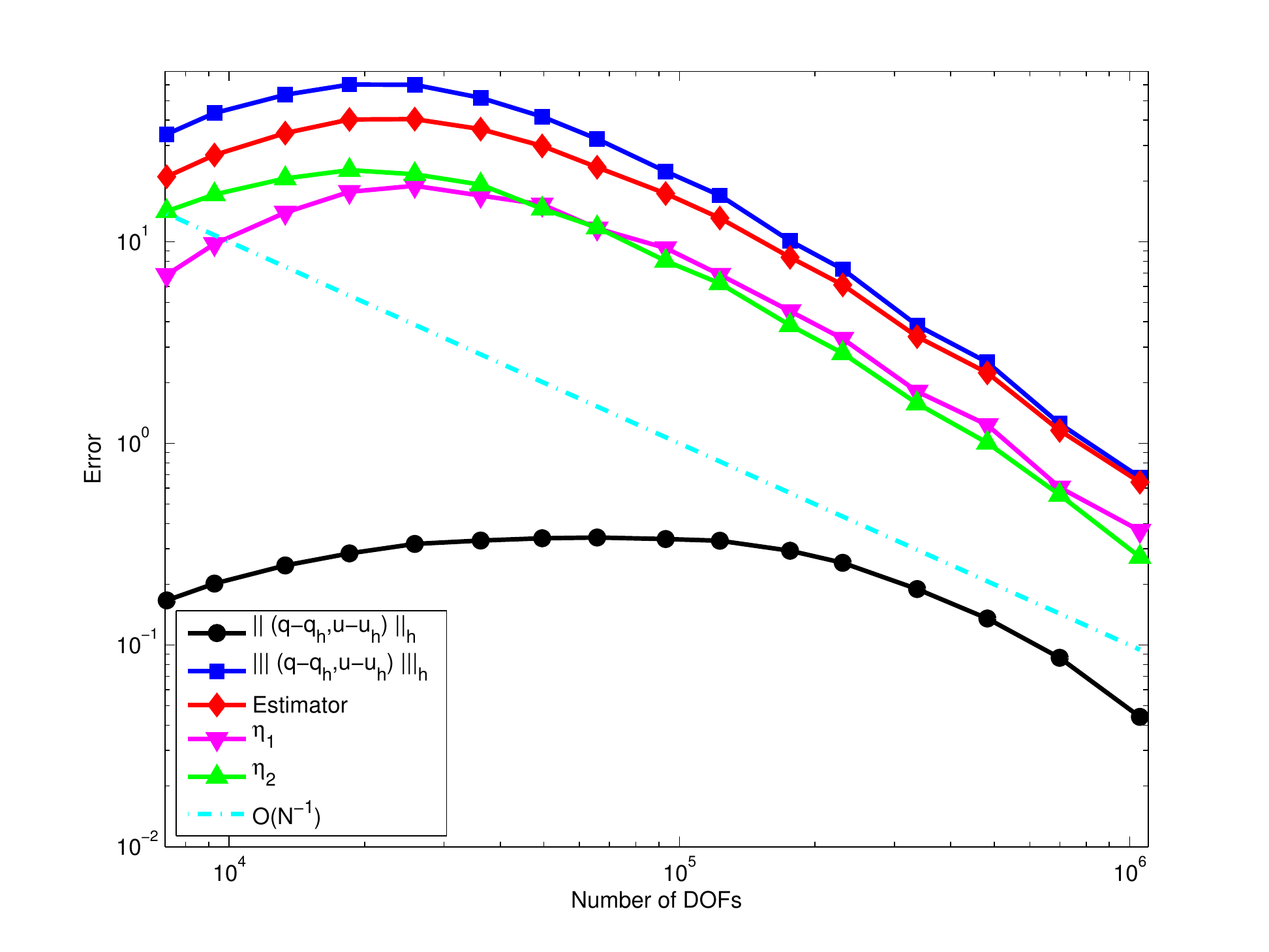}
\includegraphics[width=6cm,height=5.5cm]{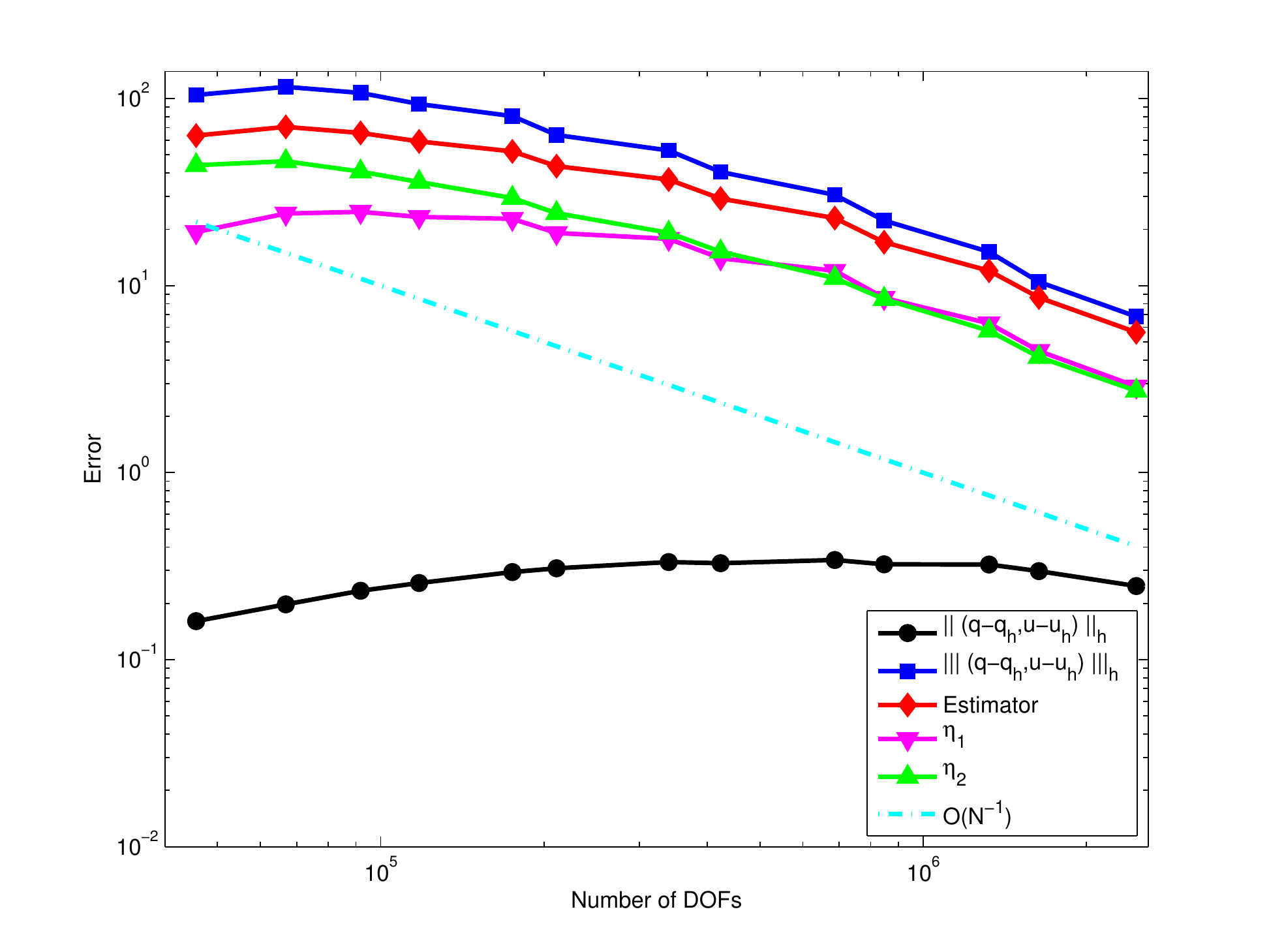}
\includegraphics[width=6cm,height=5.5cm]{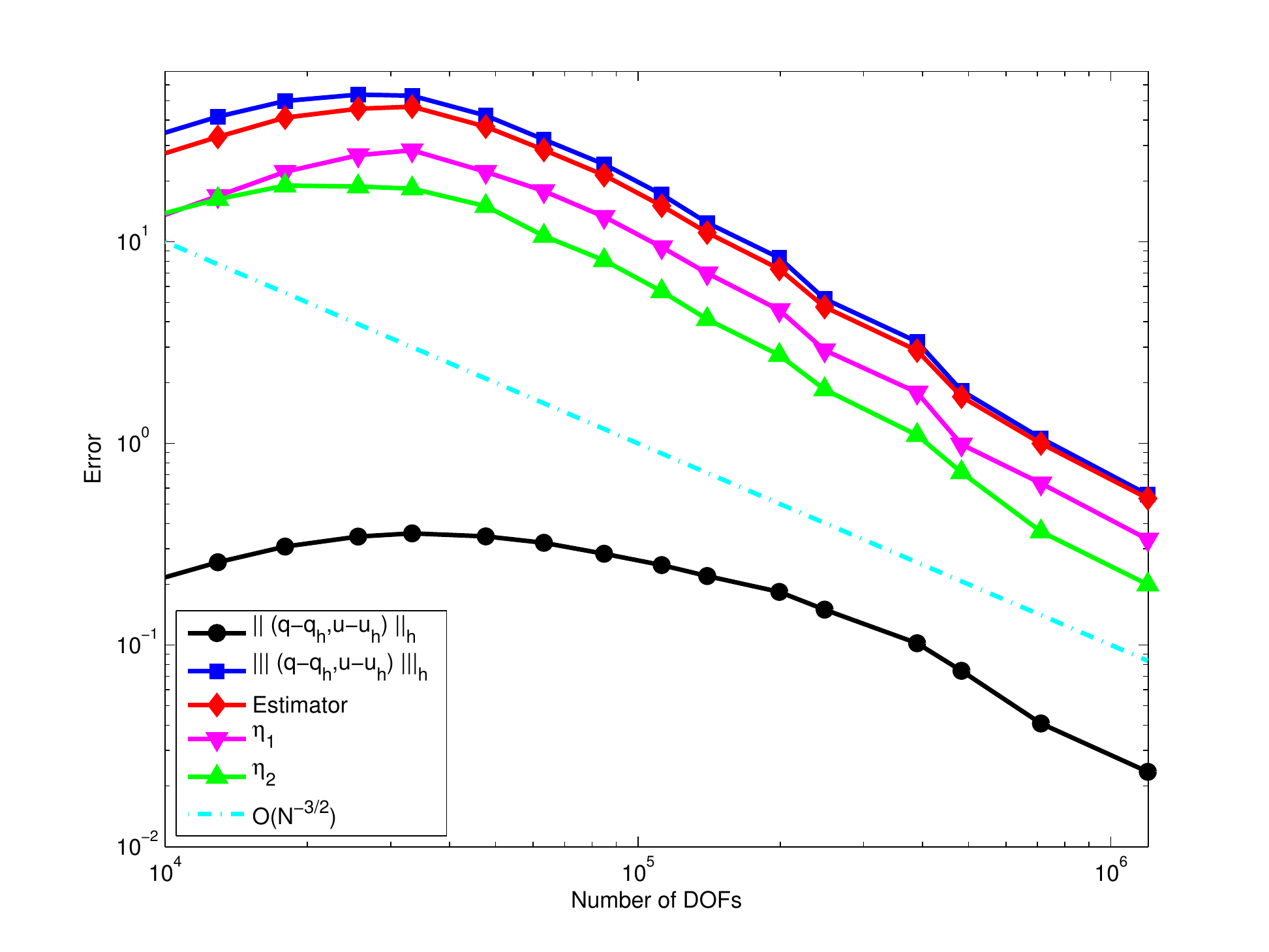}
\includegraphics[width=6cm,height=5.5cm]{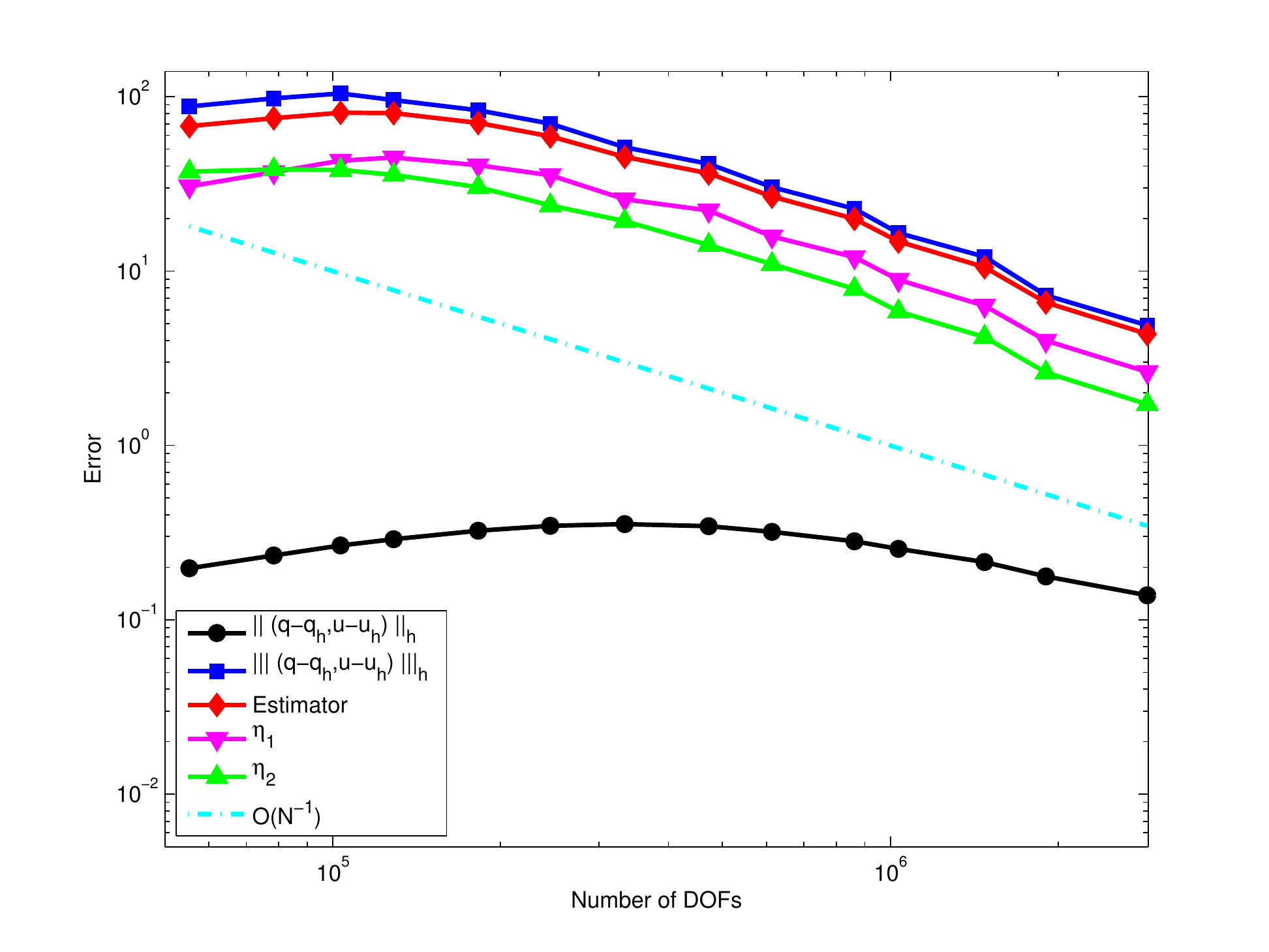}
\end{center}
\vspace{-0.2cm}
\caption{\footnotesize Convergence history of the adaptive HDG method. Left-Right: $\epsilon=10^{-5}, \epsilon=10^{-6}$. Top-Bottom: $P1$-$P3$.} \label{ex1-2}
\end{figure}

The solution develops boundary layers along the boundaries $x=1$ and $y=1$ for small $\epsilon$. The initial quasi-uniform mesh consists of 800 triangles and the initial mesh size $h_0=0.05$. Actually, our algorithm is robust for any coarser mesh. Figure \ref{ex1-1} displays the adaptively refined mesh and the corresponding approximate solution $u_h$ by 20 iterations of the adaptive HDG-P2 method for two cases $\epsilon = 10^{-4}$ and $\epsilon=10^{-5}$. One can observe that the mesh is always locally refined at the singularities along the boundaries $x=1$ and $y=1$, and the boundary layer solution can be captured on the adaptively refined mesh. In Figure \ref{ex1-2}, we show the error $\| (\Bq-\Bq_h,u-u_h) \|_h$, the total energy error $\interleave (\Bq-\Bq_h,u-u_h) \interleave_h$ (the total energy error 
is defined in (\ref{energy-error})), the a posteriori error estimators $\eta_1$ and $\eta_2$,
and the total a posteriori error estimator as functions of $N$ which is the number of degrees of freedom (DOFs) of $\widehat{u}_h$ for two cases $\epsilon = 10^{-5}$ and $\epsilon=10^{-6}$ by HDG-P1, HDG-P2 and HDG-P3 respectively. In the following, we always let DOFs refer to the DOFs of $\widehat{u}_h$. For the case $\epsilon = 10^{-5}$, the convergence results indicate the robustness of the proposed a posteriori error estimator and the almost optimal convergence rate $ O(N^{-p/2})$ for the adaptive HDG method when the number of DOFs is sufficiently large. Here, $p$ is the polynomial order. The convergence of $\eta_1$ and $\eta_2$ is similar as the total a posteriori error estimator.
For smaller $\epsilon = 10^{-6}$ in this example, although the convergence of  the error $\| (\Bq-\Bq_h,u-u_h) \|_h$ slows down, the convergence of the a posteriori error estimators and the total energy error is also almost $O(N^{-p/2})$ when $p=1,2$ and $O(N^{-1})$ when $p=3$ on the currently obtained meshes. 

}

\smallskip

\end{exm}


\begin{exm}
{\rm We consider an internal layer problem in \cite{Voh07}. We set $\Omega = [0,1] \times [0,1] , \beta = [0,1]^T, c=1$. 
The source term $f$ and the Dirichlet boundary condition are chosen such that 
\[
u(x,y) = 0.5\left(  1- {\rm tanh} \left( \frac{0.5-x}{\alpha}  \right)  \right)
\]
is the exact solution, where $\alpha$ is the width of the internal layer.

\smallskip

\begin{figure}[htbp]
\begin{center}
\includegraphics[width=6cm,height=5.5cm]{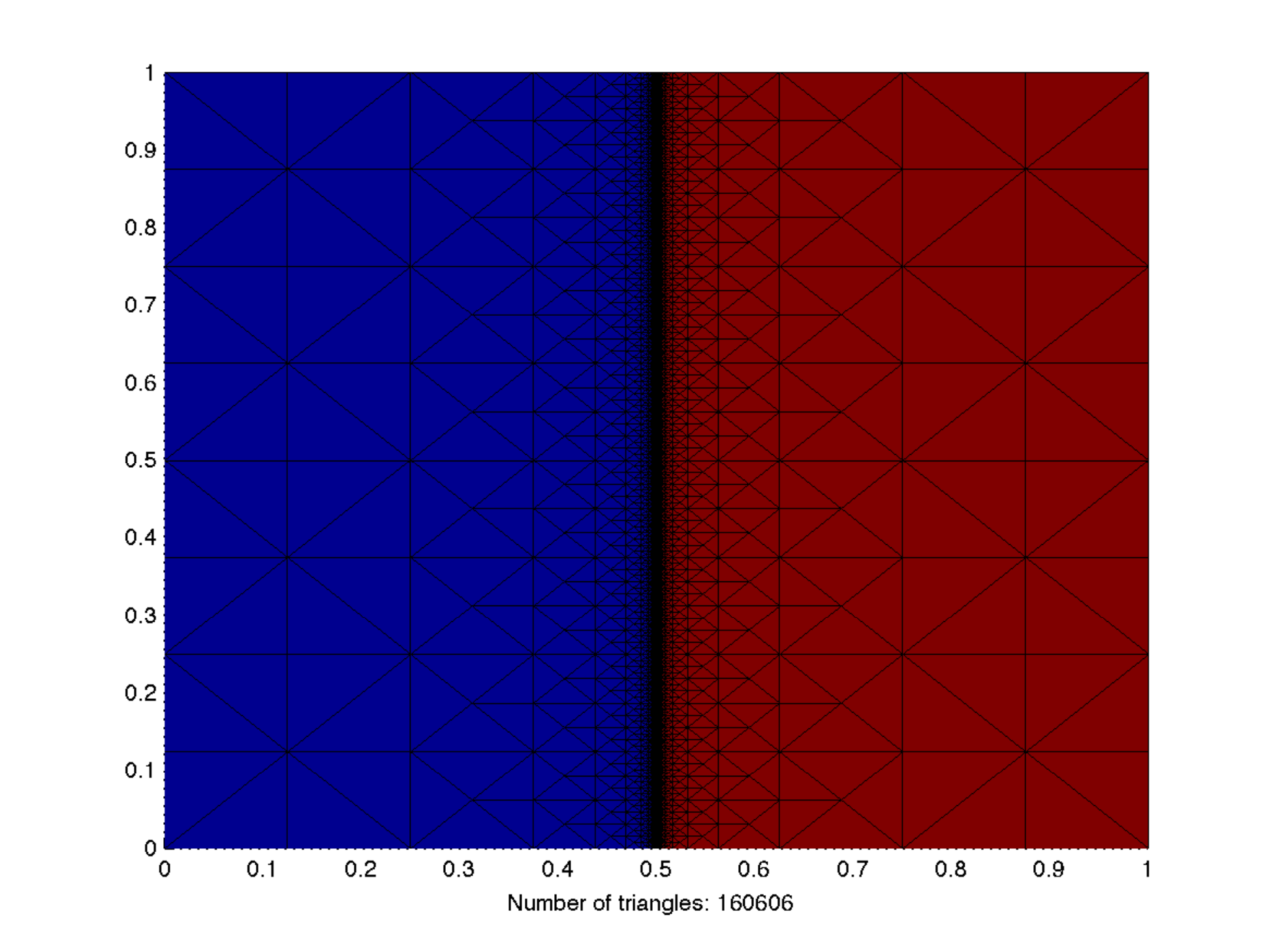}
\includegraphics[width=6cm,height=6cm]{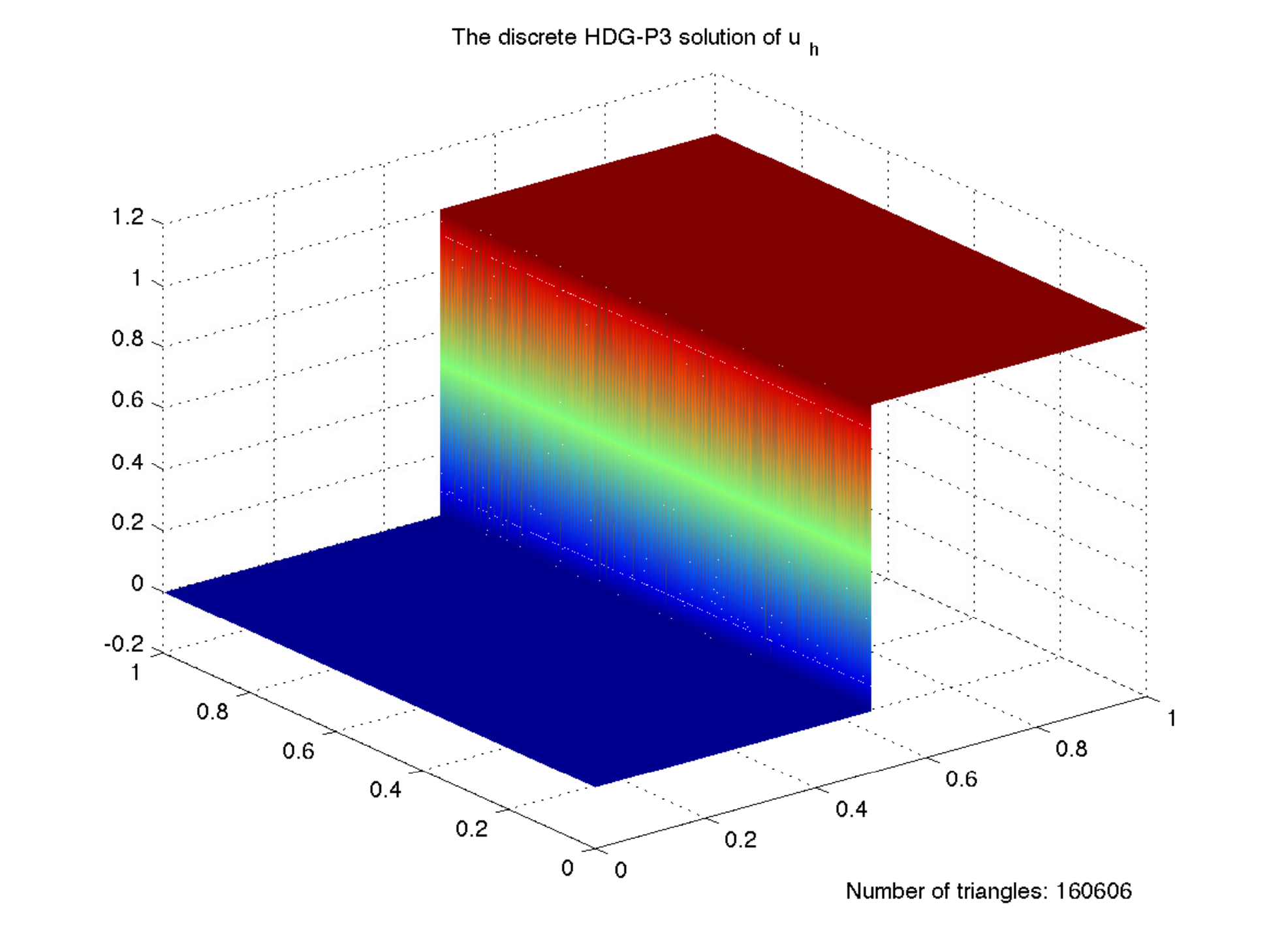}
\end{center}
\vspace{-0.2cm}
\caption{\footnotesize Adaptively refined mesh (Left) and 3D plot (Right) of the corresponding approximate solution $u_h$ by HDG-P3 for the case $\alpha= 10^{-4}$ and $\epsilon=10^{-6}$.} 
\label{ex2-1}
\end{figure}

\begin{figure}[htbp]
\begin{center}
\includegraphics[width=6cm,height=5.5cm]{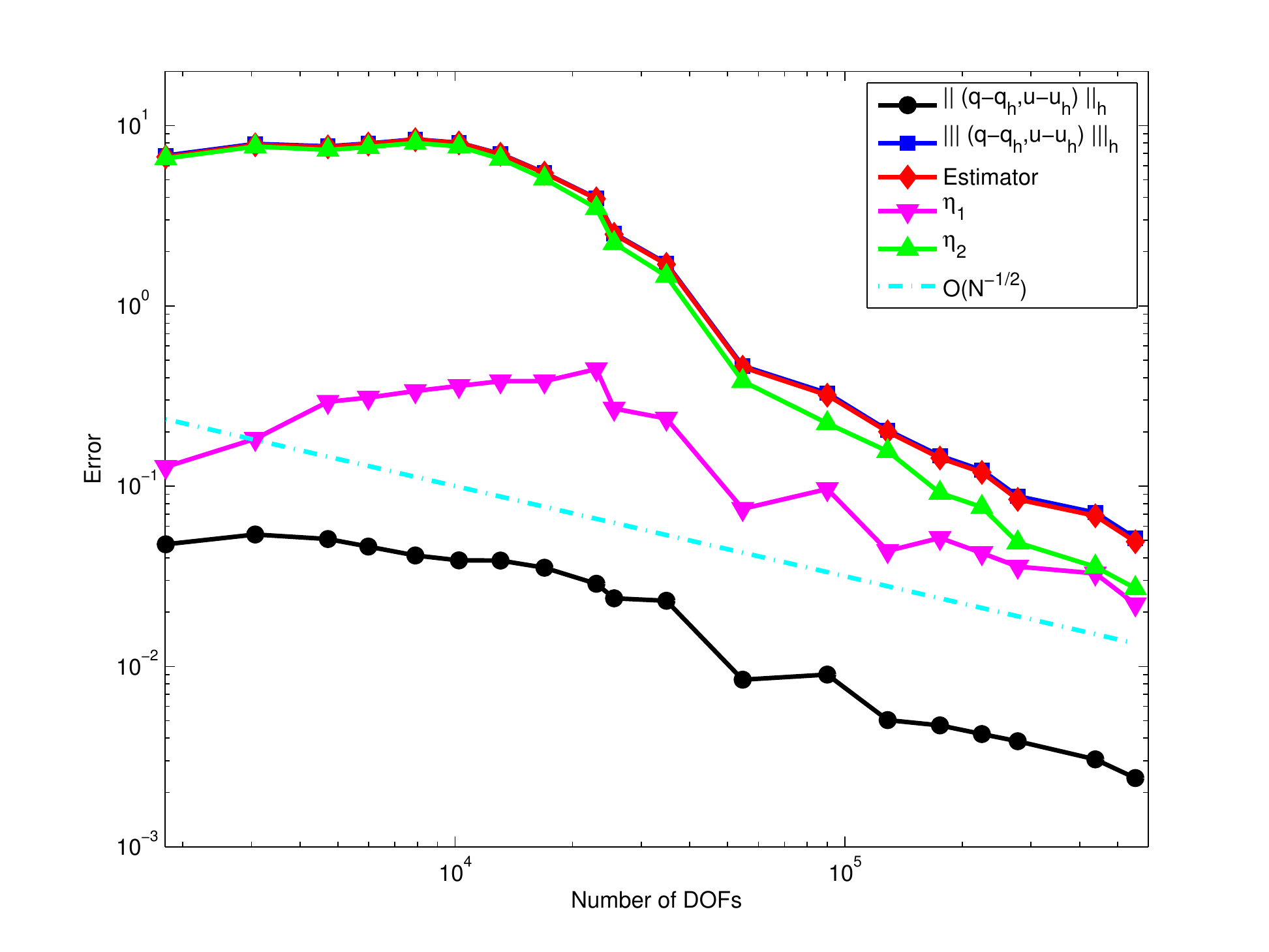}
\includegraphics[width=6cm,height=5.5cm]{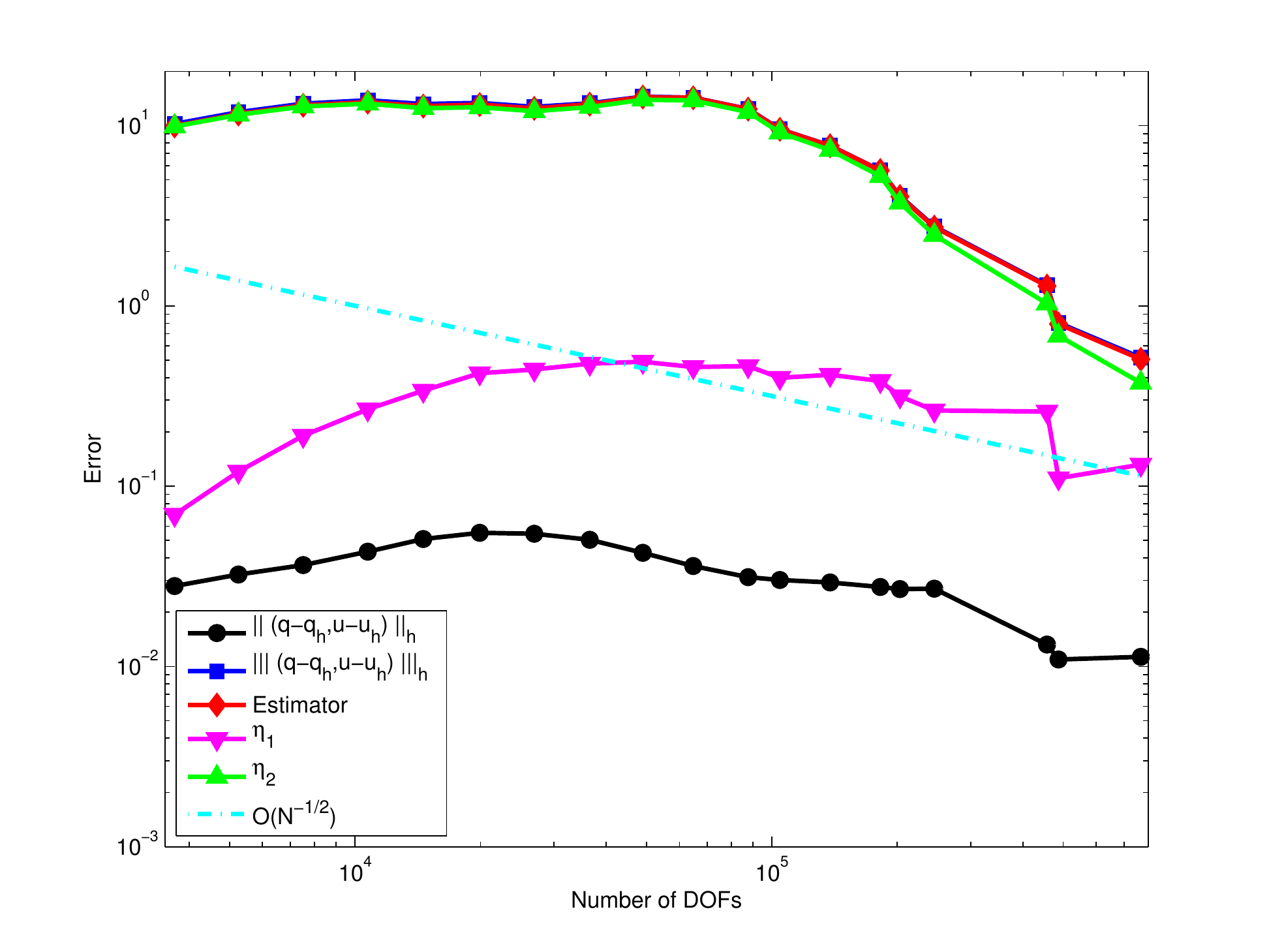}
\includegraphics[width=6cm,height=5.5cm]{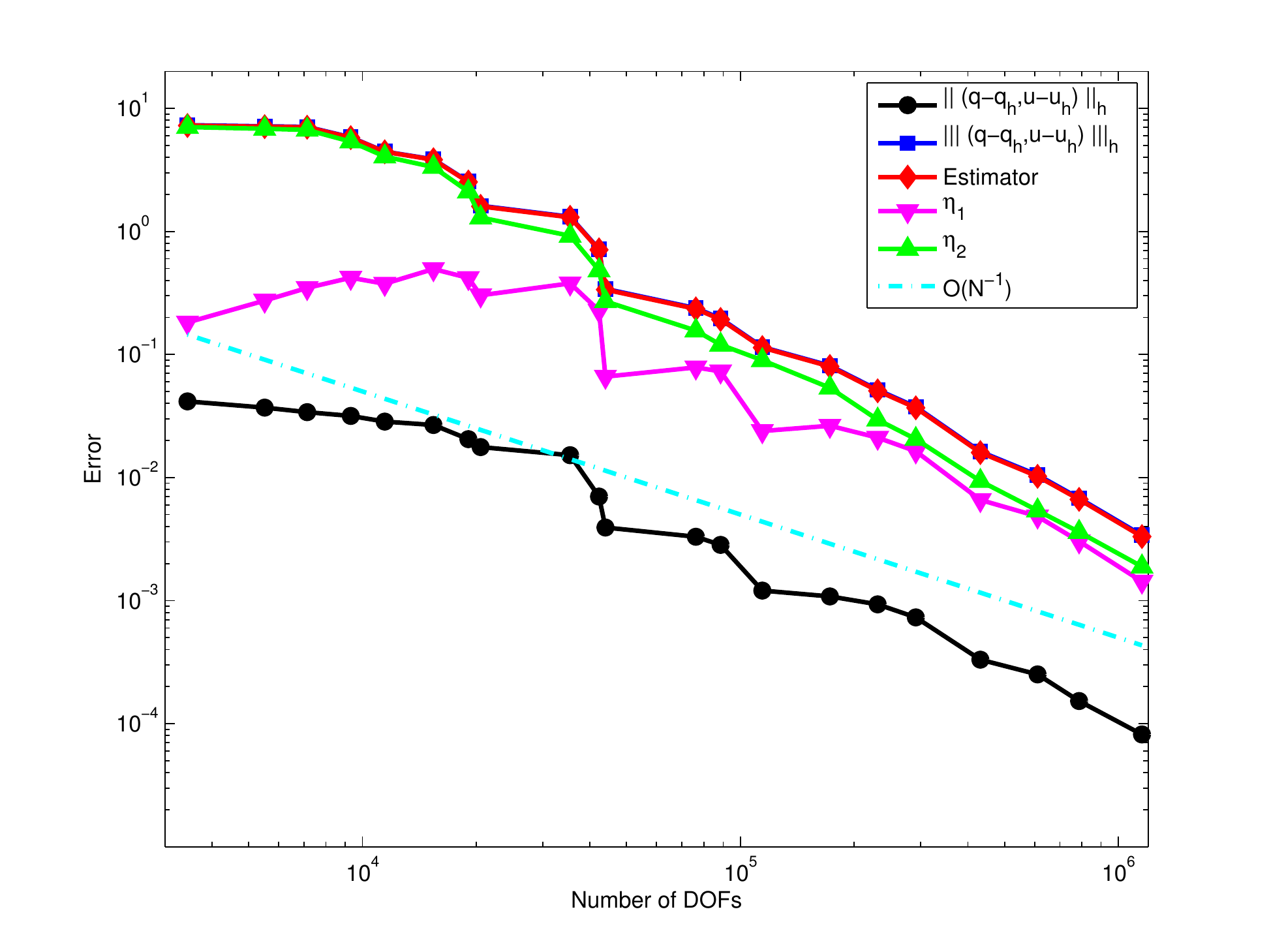}
\includegraphics[width=6cm,height=5.5cm]{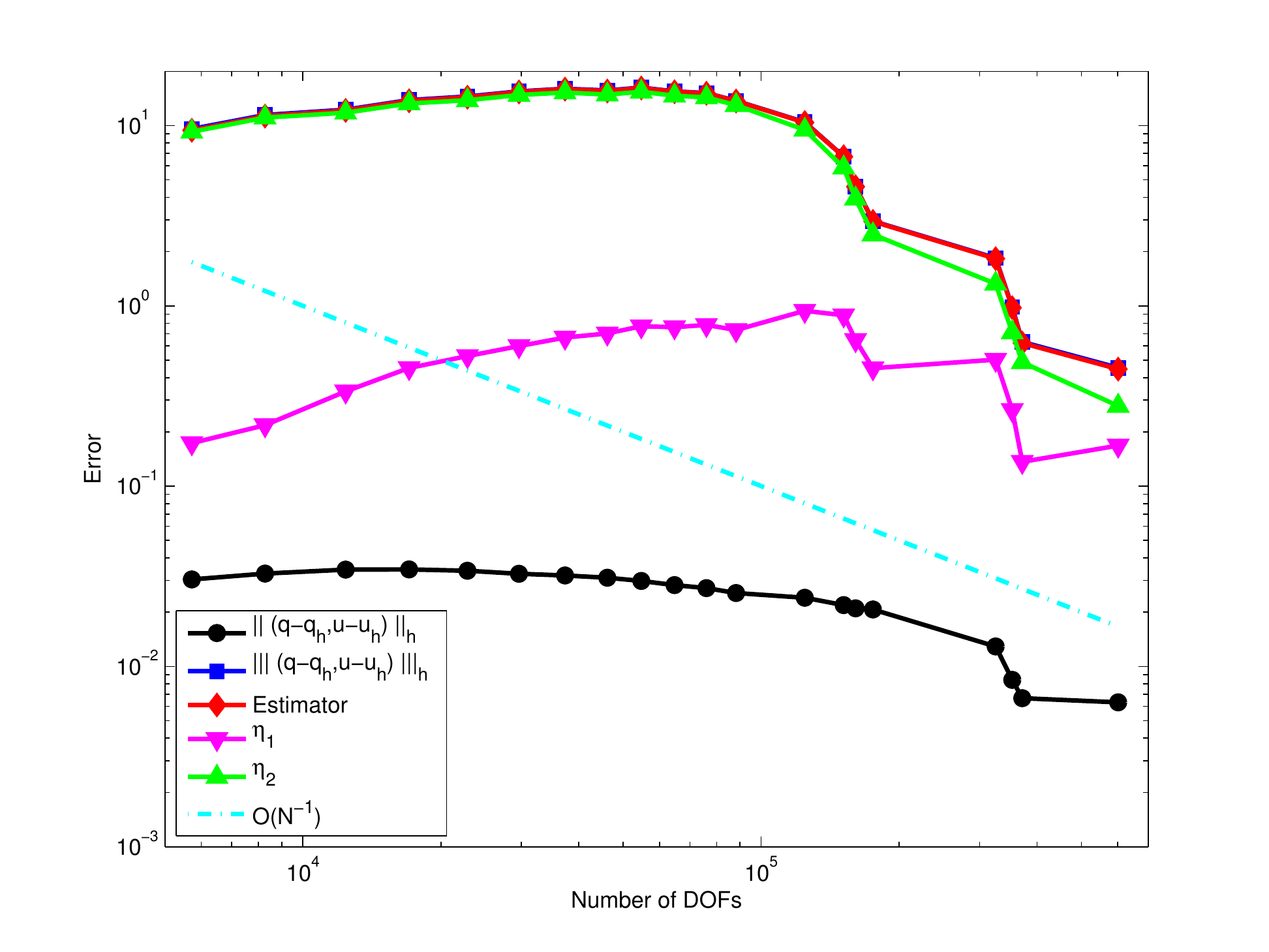}
\includegraphics[width=6cm,height=5.5cm]{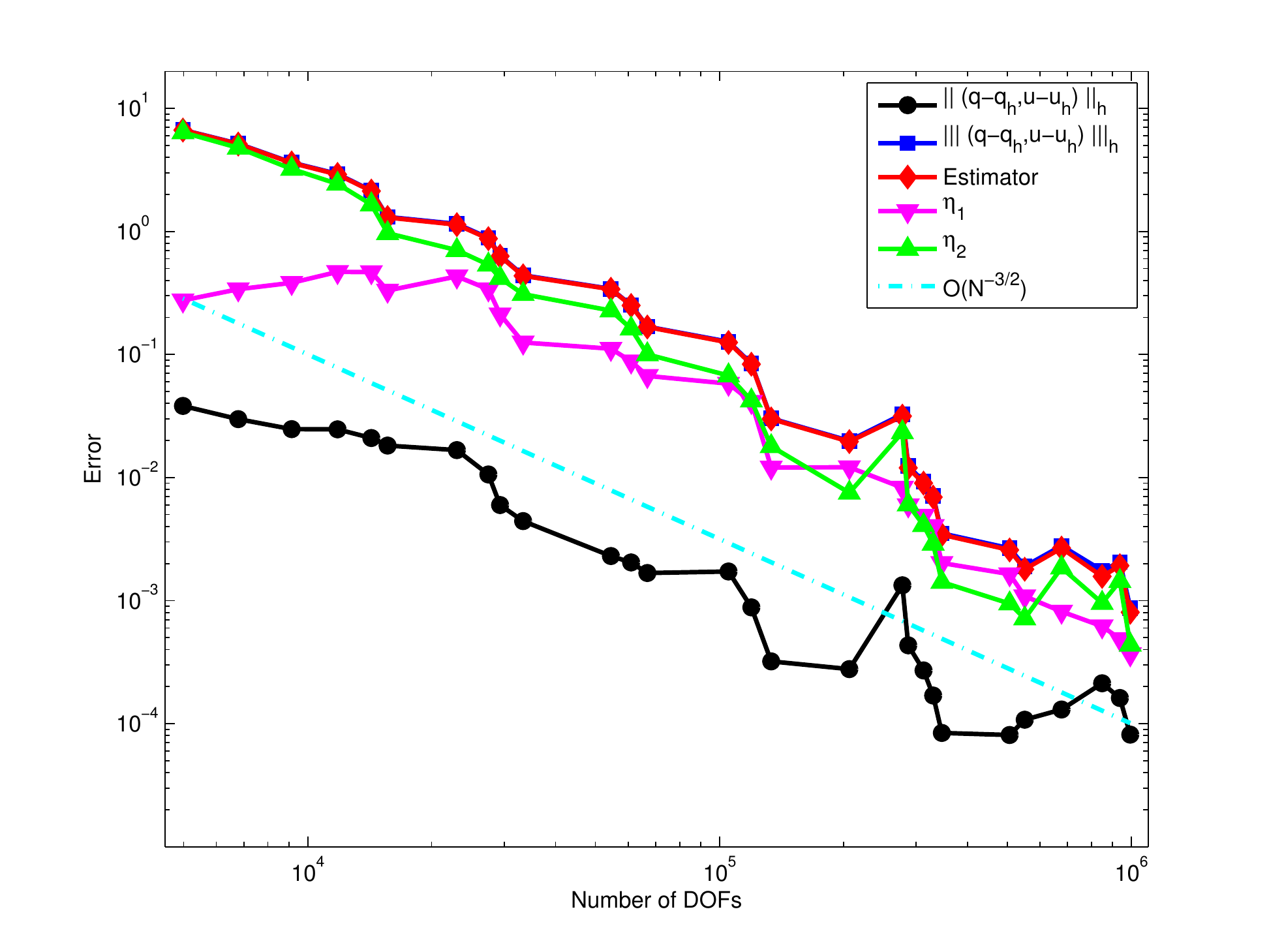}
\includegraphics[width=6cm,height=5.5cm]{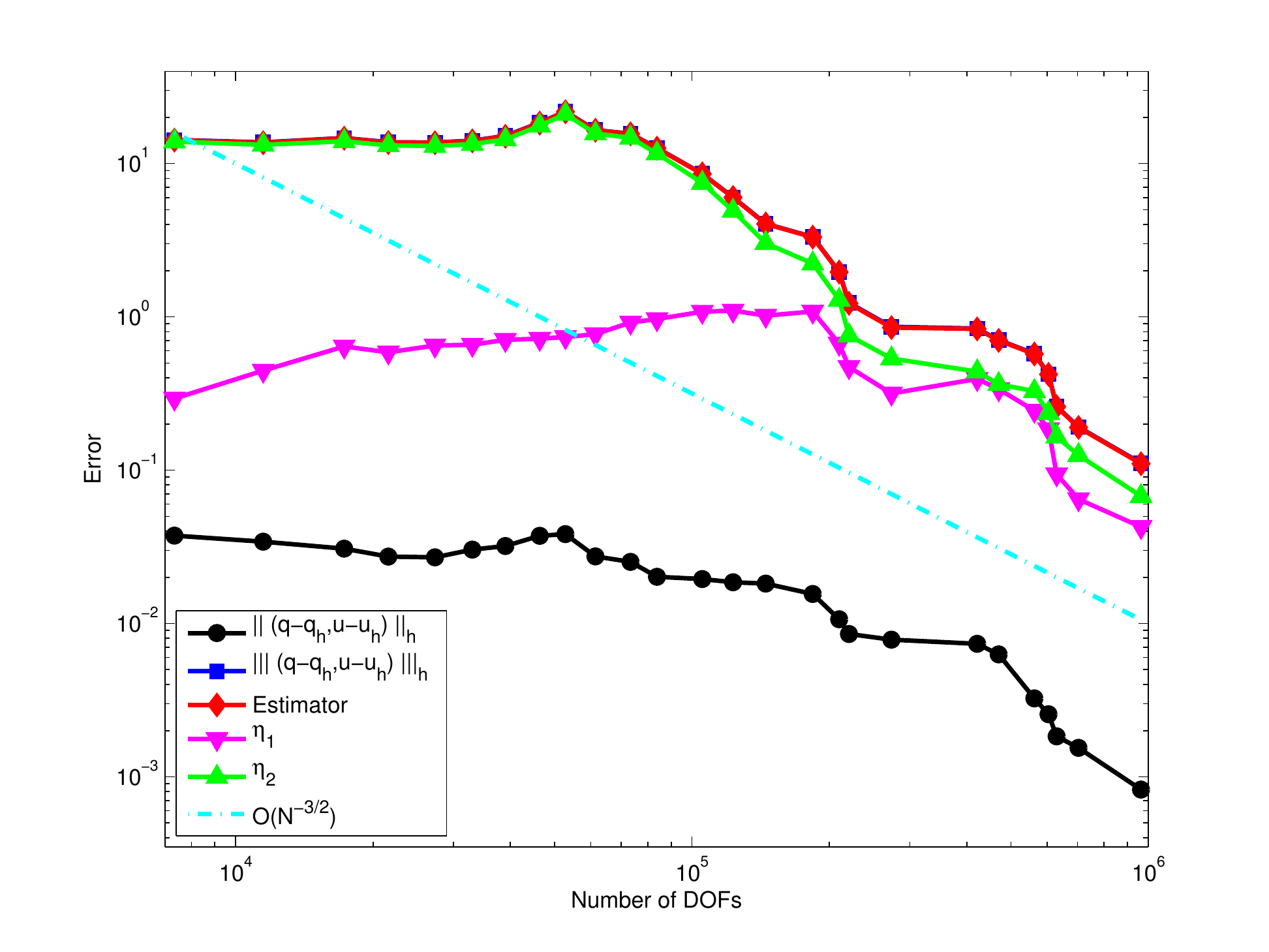}
\end{center}
\vspace{-0.2cm}
\caption{\footnotesize Convergence history of the adaptive HDG method. Left: $\alpha= 10^{-3}$ and $\epsilon =10^{-5}$. Right: $\alpha = 10^{-4}$ and $\epsilon=10^{-6}$. Top-Bottom: $P1$-$P3$.} \label{ex2-2}
\end{figure}

The solution of this problem possesses an internal layer along $x=0.5$. The initial quasi-uniform mesh 
consists of 128 triangles and the initial mesh size $h_0=0.12$. The graphs of Figure \ref{ex2-1} show the 
plots of the mesh and solution by 32 iterations of the adaptive HDG-P3 method for the case $\alpha= 10^{-4}$ 
and $\epsilon =10^{-6}$. We can see that the singularities of the solutions can also be captured near $x=0.5$ on the 
adaptively refined mesh. Figure \ref{ex2-2} shows the convergence of the corresponding errors. The robustness of 
the proposed a posteriori error estimator and the almost optimal convergence rate of the adaptive HDG method can be observed for HDG-P1, HDG-P2 and HDG-P3 when the number of DOFs is large enough. Moreover, we can also see that $\eta_1$ and $\eta_2$ converge similarly as the total a posteriori error estimator.

}

\end{exm}


\begin{figure}[htbp]
\begin{center}
\includegraphics[width=6cm,height=5.5cm]{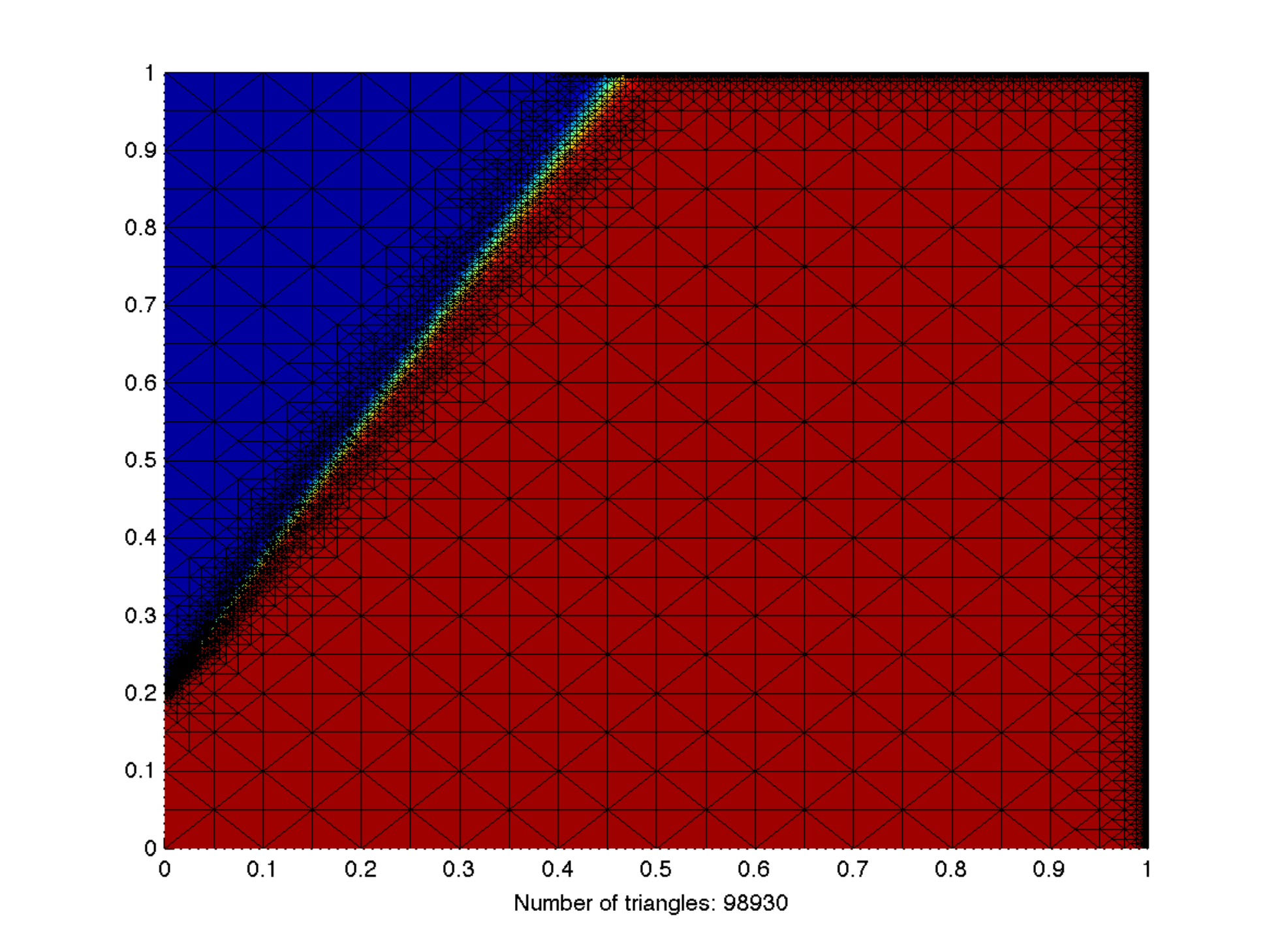}
\includegraphics[width=6cm,height=5.5cm]{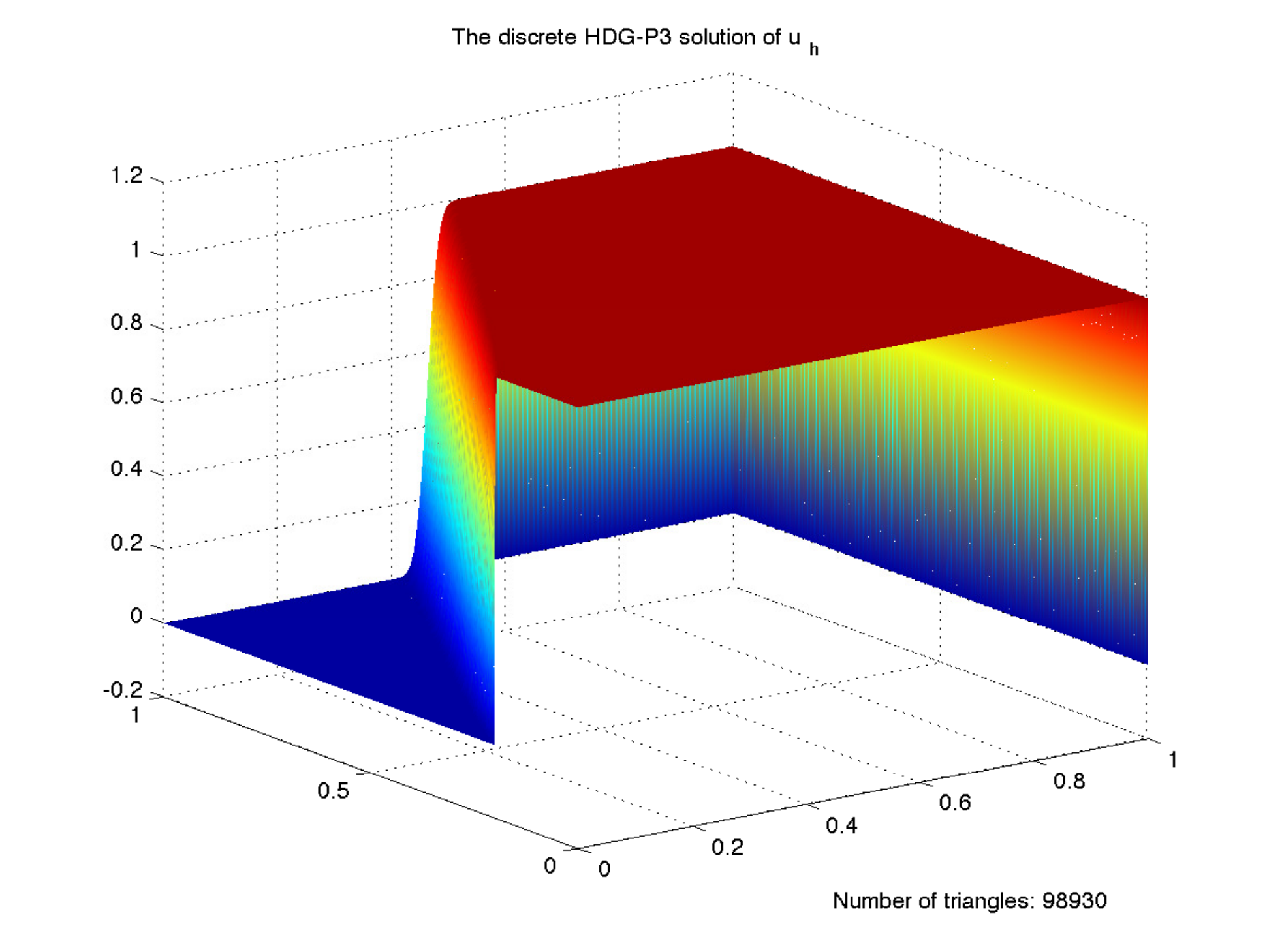}
\end{center}
\vspace{-0.2cm}
\caption{\footnotesize Adaptively refined mesh (Left) and the 3D plot (Right) of the corresponding approximate solution $u_h$ by HDG-P3 for the case $\epsilon =10^{-4}$.} \label{ex3-1}
\end{figure}
\begin{exm}
{\rm This example is also taken from \cite{AyusoMarini:cdf}. We set $\beta = [1/2,\sqrt{3}/2]^T,c=0$, the source term $f=0$ and the Dirichlet boundary conditions as follows:
\[
  u=
       \begin{cases}
          1 & \text{on $ \{y=0, 0\leq x \leq 1\} $ }, \\
          1& \text{on $ \{x=0, 0\leq y \leq 1/5\} $ },\\
       0 & \text{elsewhere
       }.
       \end{cases}
\]

\begin{figure}[htbp]
\begin{center}
\includegraphics[width=6cm,height=5.5cm]{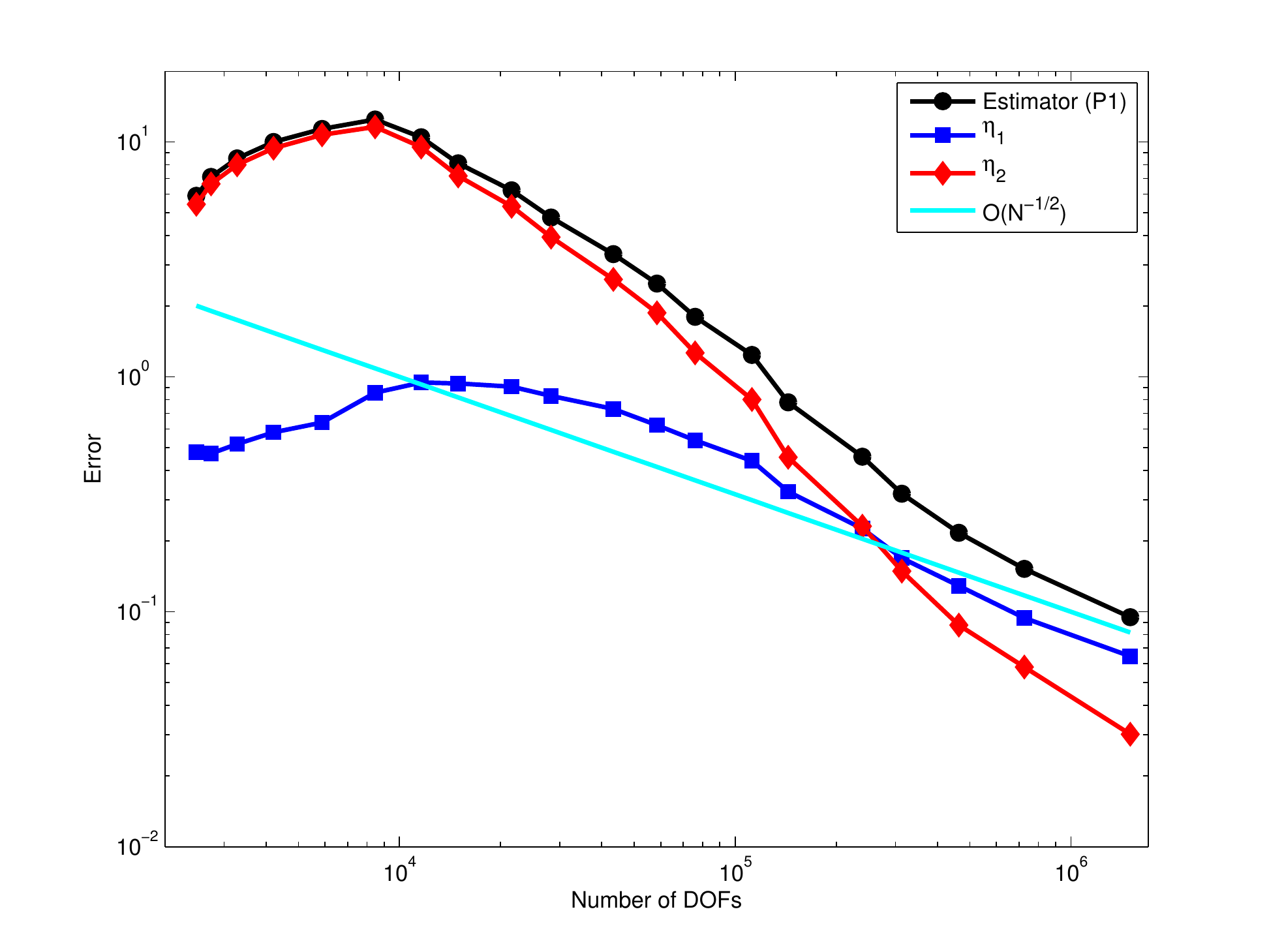}
\includegraphics[width=6cm,height=5.5cm]{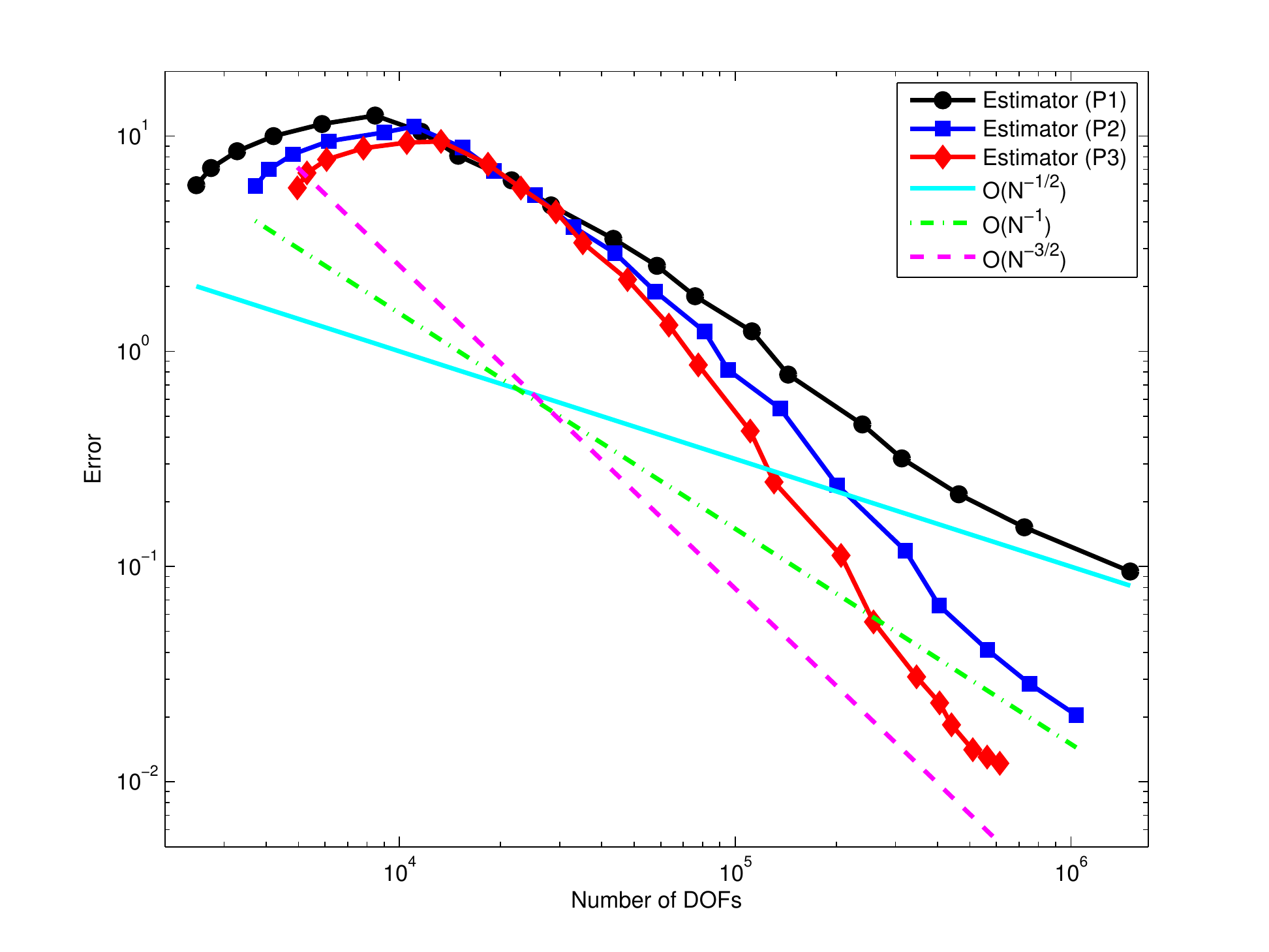}
\end{center}
\vspace{-0.2cm}
\caption{\footnotesize Convergence history of $\eta_1$, $\eta_2$ and the total a posteriori error estimator $\eta$ by HDG-P1 (Left) and comparing of convergence history of the estimator by HDG-P1, HDG-P2 and HDG-P3 (Right) for the case $\epsilon =10^{-4}$.} \label{ex3-2-1}
\end{figure}

\begin{figure}[htbp]
\begin{center}
\includegraphics[width=6cm,height=5.5cm]{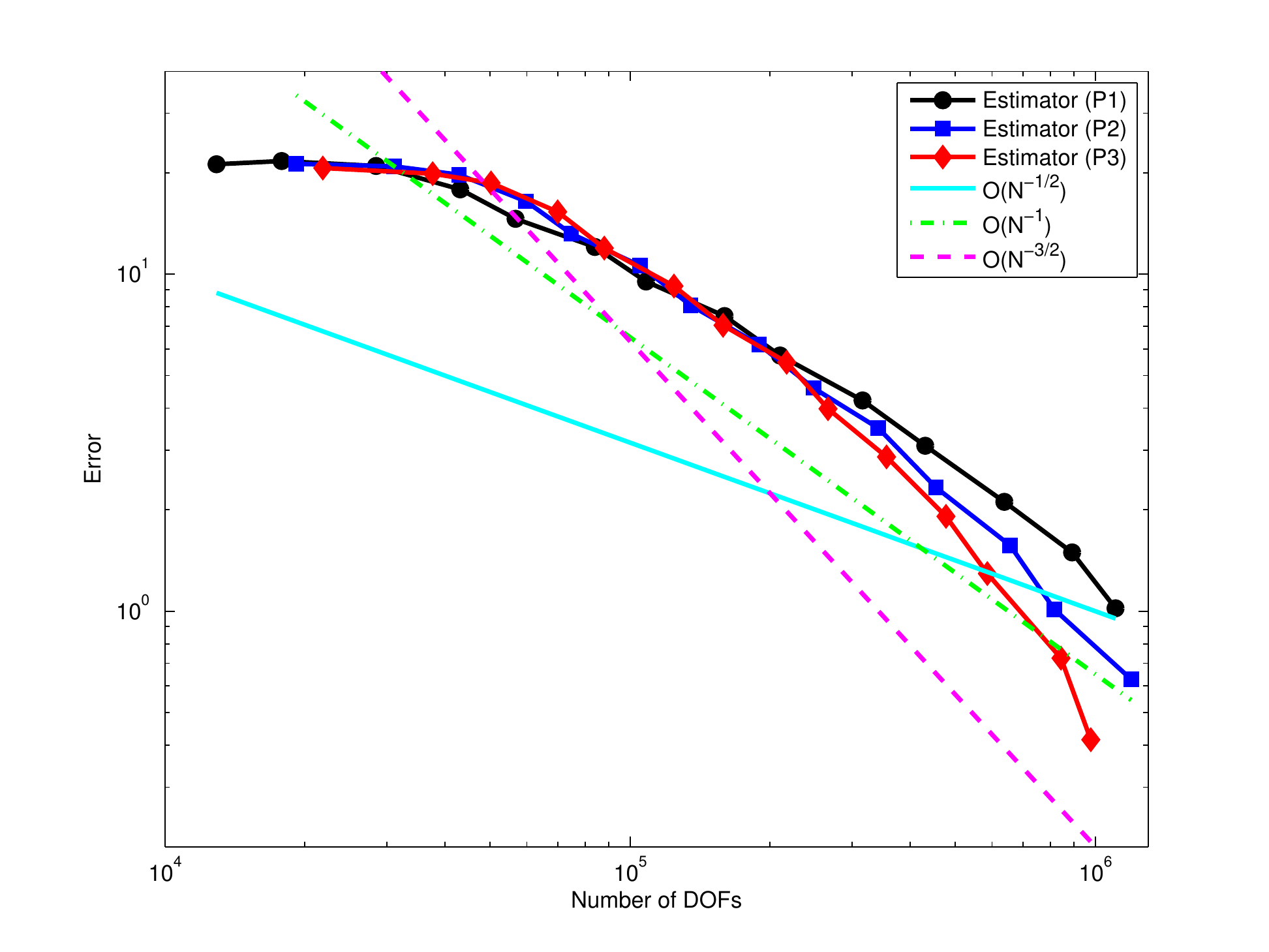}
\includegraphics[width=6cm,height=5.5cm]{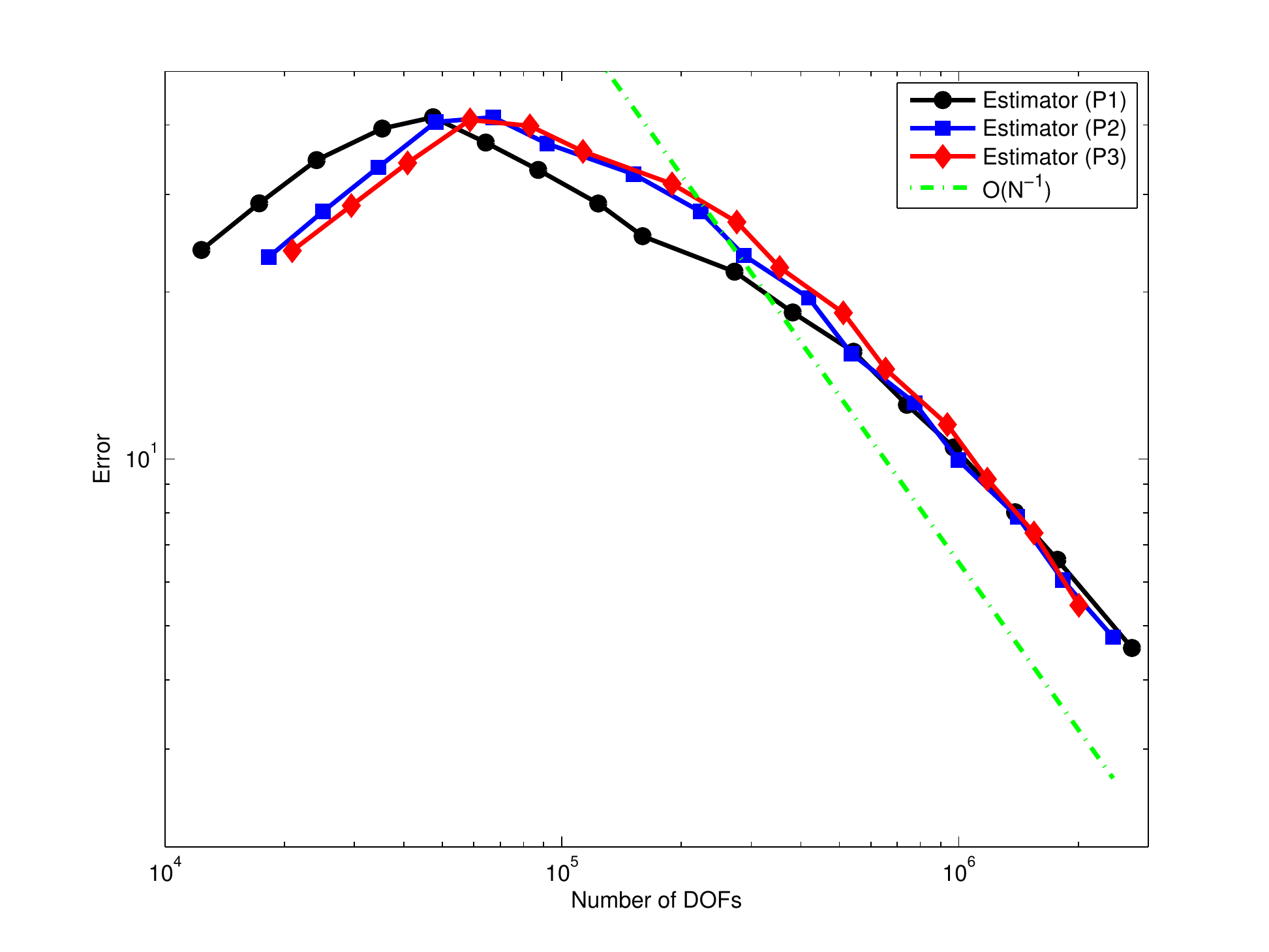}
\end{center}
\vspace{-0.2cm}
\caption{\footnotesize Convergence history of the estimator $\eta$ by HDG-P1, HDG-P2 and HDG-P3. Left: $\epsilon =10^{-5}$. Right: $\epsilon=10^{-6}$.} \label{ex3-2}
\end{figure}

The solution of this problem possesses both interior layer and outflow layer. The initial quasi-uniform mesh consists of 800 triangles and the initial mesh size $h_0=0.05$. The graphs of Figure \ref{ex3-1} show the adaptively refined mesh and the 3D plot of the corresponding approximate solution $u_h$ by 23 iterations of the adaptive HDG-P3 method for the case $\epsilon =10^{-4}$.
Figure \ref{ex3-1} indicates that both the interior and outflow layers can be captured by the adaptively refined mesh strategy. In particular, when the mesh size is $O(\epsilon)$ near the outflow layer and $O(\sqrt{\epsilon})$ near the interior layer, both interior and outflow layers can be captured well. Since there is no exact solution for this problem, we only show the convergence of the proposed a posteriori error estimator for the cases $\epsilon = 10^{-4},10^{-5}$ and $10^{-6}$  in Figure \ref{ex3-2-1} and Figure \ref{ex3-2}. For the cases $\epsilon = 10^{-4},10^{-5}$, the almost optimal convergence rate $O(N^{-p/2})$ of the estimator can be observed for HDG-P1, HDG-P2 and HDG-P3 when the number of DOFs is large enough, and the convergence rate is faster when $p$ is larger. From the left graph of Figure \ref{ex3-2-1}, we can also see that $\eta_1$ and $\eta_2$ converge similarly as the total a posteriori error estimator for the case $\epsilon=10^{-4}$ by HDG-P1. Actually, for other cases the convergence property of $\eta_1$ and $\eta_2$ is always similar. For smaller $\epsilon=10^{-6}$, the convergence of the total a posteriori error estimator is almost the same as $O(N^{-1})$ for $p=1,2,3$ on the currently obtained meshes.

}

\end{exm}

 \section*{Appendix A. Estimate of $I_{1}$ }
\renewcommand\theequation{A.1}
By the Cauchy-Schwarz and Young's inequalities, we get, for any $\delta>0$,
\begin{align}
\label{est_I1}
I_1 &\leq \frac{\delta}{2} \epsilon^{-1} \| \Bq - \Bq_h \|^2_{0,\Ct_h} + \frac{1}{2\delta} \| \nabla \psi e^{-\psi} \|^2_{L^\infty(\Omega)} 
\epsilon \| u_h -u^{\Ci}_h \|^2_{0,\Ct_h}\\
\nn
&\quad +  \frac{\delta}{2} \epsilon^{-1} \| \Bq - \Bq_h \|^2_{0,\Ct_h} +\frac{1}{2\delta} \|\varphi\|^2_{L^\infty(\Omega)} \epsilon^{-1} \| \Bq_h
+\epsilon \nabla u^{\Ci}_h \|^2_{0,\Ct_h}\\
\nn
&\leq \delta \epsilon^{-1} \| \Bq - \Bq_h \|^2_{0,\Ct_h} +  \frac{1}{2\delta} \| \nabla \psi e^{-\psi} \|^2_{L^\infty(\Omega)} \epsilon \| u_h -u^{\Ci}_h \|^2_{0,\Ct_h}\\
\nn
&\quad + \frac{1}{\delta}\|\varphi\|^2_{L^\infty(\Omega)} \left(  \epsilon^{-1} \| \Bq_h+\epsilon \nabla u_h \|^2_{0,\Ct_h} + \epsilon \|\nabla u_h -  \nabla u^{\Ci}_h  \|^2_{0,\Ct_h}  \right).
\end{align}
     
\section*{Appendix B. estimate of $I_{4}$}
\renewcommand\theequation{B.1}
Now we consider the estimate of $I_4$. It is clear that 
\[
-\left( \bbeta \cdot \nabla(u-u^{\Ci}_h) , \varphi( u-u^{\Ci}_h )  \right)_{\Ct_h} = \frac{1}{2}\left(  \bbeta \cdot \nabla \varphi, (u-u^{\Ci}_h)^2\right)_{\Ct_h} + \frac{1}{2}\left(  {\rm div}\bbeta \, , \varphi(u-u^{\Ci}_h)^2\right)_{\Ct_h},
\]
which can be obtained by integration by parts and $\langle \bbeta \cdot \Bn , \varphi( u-u^{\Ci}_h )^2\rangle_{\partial \Ct_h}=0$. Then, we have
\begin{align}
I_4&=\left( \bbeta \cdot \nabla \varphi(u-u_h),u_h - u^{\Ci}_h  \right)_{\Ct_h} + \frac{1}{2}\left( \bbeta \cdot \nabla \varphi(u^{\Ci}_h-u_h ) ,u^{\Ci}_h-u_h\right)_{\Ct_h}\nn\\
&\quad + 2\big( (c-\frac{1}{2}{\rm div}\bbeta)(u^{\Ci}_h-u_h  ), \varphi(u-u_h) \big)_{\Ct_h} - 
\big( (c-\frac{1}{2}{\rm div}\bbeta)(u^{\Ci}_h-u_h  ), \varphi(u^{\Ci}_h-u_h) \big)_{\Ct_h}. \nn
\end{align}
Appling the Cauchy-Schwarz and Young's inequalities, we get
\begin{align}
\label{est_I4}
I_4 \leq \frac{1}{2}(C^d_5+2C^d_6)(\frac{1}{\delta}+1) \| u_h - u^{\Ci}_h \|^2_{0,\Ct_h} + (\frac{1}{2}C^d_5+C^d_6)\delta \| u- u_h \|^2_{0,\Ct_h},
\end{align}
where $C^d_5 = \| \bbeta \cdot \nabla \varphi \|_{L^\infty(\Omega)}, C^d_6=\|\varphi(c-\frac{1}{2}{\rm div}\bbeta)\|_{L^\infty(\Omega)}$.

\section*{Appendix C. Proof of Lemma \ref{lemma_control_eta2}}
\begin{proof}
We begin by the estimate $\alpha^2_T \| R_h \|^2_{0,T} \leq 2 \alpha^2_T \| P_WR_h \|^2_{0,T}  + 2 osc^2_h(R_h,T)$.
Note that for any $w\in H^1_0(T)$, we have
\[
(R_h,w)_T = ( {\rm div}(\Bq-\Bq_h)+\bbeta \cdot \nabla (u-u_h),w  )_T + (c(u-u_h),w)_T.
\]
Moreover, for the element bubble function $B_T$, we have
\[
\| P_W R_h \|^2_{0,T} \thickapprox \int_T B_T (P_W R_h)^2 \thickapprox \| B_T P_W R_h \|^2_{0,T} .
\]
Here, we use $A \thickapprox B$ if $C B \leq A \leq CB$ with positive constant $C$.
Taking $w = B_T P_W R_h$, we derive that
\begin{align*}
\| P_W R_h \|^2_{0,T} &\thickapprox(P_W R_h ,w)_T = ( R_h ,w)_T - (R_h - P_W R_h ,w)_T\\
& = ( {\rm div}(\Bq-\Bq_h)+\bbeta \cdot \nabla (u-u_h),w  )_T + (c(u-u_h),w)_T- (R_h - P_W R_h ,w)_T.
\end{align*}
Then, we can conclude the proof is complete by the above estimates and the Young's inequality.
\end{proof}


\end{document}